\documentclass[letterpaper,11pt,onecolumn]{article}
\usepackage[margin=1in]{geometry}

\usepackage[T1]{fontenc}
\usepackage[latin1]{inputenc}
\usepackage[english]{babel}

\usepackage{amsmath}
\usepackage{amsthm}
\usepackage{amsbsy}
\usepackage{amsfonts}
\usepackage{amssymb}
\usepackage{array}
\usepackage{bm}
\usepackage{booktabs}
\usepackage{csquotes}
\usepackage[inline]{enumitem}
\usepackage{float}
\usepackage{graphicx}
\usepackage{mathrsfs}
\usepackage{pgf}

\usepackage[font={footnotesize}]{caption}
\usepackage{subcaption}

\usepackage{tikz}
\usetikzlibrary{patterns}

\usepackage[pdftex]{hyperref}
\hypersetup{colorlinks=false}

\def\mhend{\color{black}}
\newcommand{\ol}[1]{\overline{#1}}
\newcommand{\tr}{\theta_{\operatorname{max}}}
\def\dom{\operatorname{D}}
\def\rfi{\mathfrak{n}}
\def\Hdir{H^1_{\Gamma_\mathrm{in}}(\Omega)}
\def\af{u}
\def\tf{v}
\def\phitf{\xi}
\def\deltanew{\delta}
\newcommand{\Ar}{T}
\newcommand{\ap}{a_\Omega}
\newcommand{\innerprod}[1]{(#1)}

\def\ninn{{n\in\mathbb{N}}}

\newcommand{\ldhrev}[1]{{\color{blue}{#1}}}

\newtheorem{theorem}{Theorem}[section]
\newtheorem{assumption}[theorem]{Assumption}
\newtheorem{lemma}[theorem]{Lemma}

\newtheorem{remark}[theorem]{Remark}
\newtheorem{corollary}[theorem]{Corollary}

\newtheorem{definition}[theorem]{Definition}

\newcommand{\eremk}{\hbox{}\hfill\rule{0.8ex}{0.8ex}}

\def\be{\begin{equation}}
\def\ee{\end{equation}}
\def\ba{\begin{array}}
\def\ea{\end{array}}
\def\bea{\begin{eqnarray}}
\def\eea{\end{eqnarray}}
\def\beas{\begin{eqnarray*}}
\def\eeas{\end{eqnarray*}}

\newcommand{\half}{\frac{1}{2}}

\newcommand{\impedance}{\varsigma}

\newcommand{\ome}{{\omega}}
\newcommand{\ptl}{{\partial}}

\newcommand{\doubleIN}{\mathbb{N}}
\newcommand{\doubleIR}{\mathbb{R}}
\newcommand{\doubleIC}{\mathbb{C}}

\newcommand{\ds}{\displaystyle}

\catcode `@=11

\newbox \itemlist@label
\newdimen \itemlist@labelpad

\def\itemlist@makelabel#1{%
\setbox\itemlist@label =\hbox{#1}%
\ifdim \wd\itemlist@label >\labelwidth
\itemlist@labelpad=\textwidth
\advance\itemlist@labelpad by -\rightmargin
\advance\itemlist@labelpad by -\@totalleftmargin
\advance\itemlist@labelpad by \labelwidth
\hbox to \itemlist@labelpad {#1\hfill}%
\else #1\hfill
\fi
}

\newcounter{exercisenumber}

\renewcommand{\theexercisenumber}{\thesection.\arabic{exercisenumber}}

\newcommand{\dtn}{\operatorname{DtN}}
\newcommand{\DtN}{\dtn}

\renewcommand{\Re}{\operatorname*{Re}}
\renewcommand{\Im}{\operatorname*{Im}}

\let\tilde\widetilde

\numberwithin{equation}{section}

\graphicspath{{figures/}}

\usepackage[backend=bibtex,
	giveninits=true,
	maxbibnames=4,
	date=year,
	doi=false,
	url=false,
	isbn=false,
	eprint=false]{biblatex}
\begin{document}
\baselineskip=16pt
\parskip=4pt

\newpage

\begin{center}
{\Large {\bf  
{\Large
Length-explicit stability analysis of Helmholtz problems in leaky circular waveguides \\[12pt]
L. Demkowicz$^a$, M. Halla$^b$  and J.M. Melenk$^c$
\\[12pt]}}}
{\large 
$^a$Oden Institute, The University of Texas at Austin\\[5pt]
$^b$Karlsruhe Institute of Technology\\[5pt]
$^c$Technische Universit\"at Wien 
}
\end{center}

\begin{abstract}
Motivated by the study and simulation of long, coiled optical fibers we consider in this article a simplified model that is prevalent in the engineering community.
Mathematically, the problem is specified as follows:
Time-harmonic wave propagation is modeled by the Helmholtz equation; the waveguide is a bounded circular section with a transparent boundary condition on one end; the dissipation of energy is modeled by an impedance boundary condition on the outer hull of the waveguide.
We show a stability estimate that is explicit in terms of the angular length of the waveguide.
The analysis is based on a separation of variables ansatz and the study of the related (nonselfadjoint) modal eigenvalue problem.
The key property there is to show that the modes form a Riesz basis in both $L^2$ and $H^1$ spaces.
To this end we apply perturbation theory for selfadjoint operators and the concept of local subordination of perturbations
[B.\ Mityagin and P.\ Siegl, JAM 139 (2019)].
Since the possibility of nontrivial Jordan chains cannot be ruled out, our whole methodology is conducted accordingly.
In addition, in contrast to previous works, we include a bounded but heterogeneous part of the waveguide into our considered setting.
\end{abstract}

\paragraph*{Key words:}  Acoustical waveguides, slab optical waveguides, non self-adjoint operators

\paragraph*{AMS classification:} 78A50, 35Q61

\subsection*{Acknowledgments}  L.~Demkowicz was supported with AFOSR grant FA9550-23-1-0103.
M. Halla acknowledges funding from Deutsche Forschungsgemeinschaft (DFG, German Research Foundation), projects 541433971 and 258734477 - SFB 1173.
J.M. Melenk acknowledges funding by the Austrian Science Fund (FWF) under 
grant F65 ``taming complexity in partial differential systems'' (\href{https://doi.org/10.55776/F65}{DOI:10.55776/F65}).

%
%

\section{Introduction}


Electromagnetic waveguides are important for applications in telecommunication \cite{ImperialeJoly14} and energy transfer \cite{cmacs2003baseline} (see, e.g., the bibliography of \cite{Leclerc25} for further references).
In particular, fiber optic waveguides provide foundations for several photonic applications, including sensing, scientific experimentation, 
healthcare, defense technologies, and others.
For amplifier applications the waveguide consists of a few centimeters subject to various devices such as signal and pump input signals, followed by several meters of a uniform fiber \cite{McComb09}.
In order to meet compact packaging conditions, the latter part of the fiber is typically coiled (around a cooling device).
To simulate such applications a nonlinear Maxwell system has to be solved in the entire waveguide \cite{RamanGain19}, where the stability constant of the linearized system significantly impacts the involved fixed-point iteration in the solution process of the nonlinear system.
Thus an analysis of this stability constant in terms of the fiber's length is crucial to determine the feasibility of numerical methods.
To cope with the high computational demand of realistic simulations \cite{Henneking21} the discontinuous Petrov Galerkin (DPG) method has successfully been applied.
In this context an explicit knowledge of the dependence of the stability constant upon the length of the waveguide allows one to tune the test norm for the ultraweak formulation to essentially enhance the discrete stability of the DPG method, see \cite[Sec.~5]{Melenk_Demkowicz_Henneking_25} for details and numerical illustrations.
Critical to successful simulations of problems with 10M wavelengths is the use of the \emph{full envelope ansatz} \cite{Henneking_Grosek_Demkowicz_25} which reduces the problem to ``only'' thousands of wavelengths but results in modified Maxwell equations to be solved.
Due to \cite{Melenk_Demkowicz_Henneking_25} the stability constant for this modified problem is equal to the one of the original problem.


In mathematical terms a realistic model for a fiber amplifier is given as a helical waveguide with a step index profile, a dissipative boundary condition at the outer hull and a termination with a transparent boundary condition.
However, in contrast to the study of \emph{frequency explicit} stability estimates (see, e.g., \cite{SauterTorres18,ChaumontFMoiolaSpence23}) the study of fiber \emph{length explicit} stability estimates is much less mature.
The latter was initiated for the case of a straight, homogeneous, closed fiber \cite{Melenk_Demkowicz_Henneking_25}.
Therein, a key ingredient of the analysis are basis properties of the modes \cite{Kim17}.
However, to consider the step index profile, a heterogeneous waveguide has to be dealt with, for which the analysis of the modes \cite{Delitsyn00,SheSmirSmo22,HallaMonk24} is much more intricate.
Another challenge is the helical nature, which even for a scalar Helmholtz equation leads to a \emph{quadratic} modal eigenvalue problem \cite{GopalaNeunteufel25}.
Nevertheless, in the prevalent engineering literature this aspect is neglected and a circularly bent slab waveguide (leading to the scalar Helmholtz problem considered in this article) is used as a reference model
\cite{Marcatili69,
HeiblumHarris75,
Marcuse76,
TakumaMiyagiKawakami81,
Marcuse82,
FaustiniMartini97,
Taylor84,
SchermerCole07}.
The main focus in these studies has been on losses due to bending, which are usually respected by imposing an impedance boundary condition where several material formulas have been proposed.
Indeed, it is recognized that the impedance boundary condition serves as an approximation to an exact transparent boundary condition of an \emph{open} waveguide and we refer to \cite{Mora-Paz_Demkowicz_Taylor_Grosek_Henneking_25} for computational studies on the validity of impedance type models.
Let us note that open, bent waveguides have some distinct properties in comparison to open, straight waveguides \cite{BambergerBonnet90,JolyPoirier95}.
Taking into account the impedance boundary condition the modal eigenvalue problem becomes non-selfadjoint, which further complicates the analysis of modal basis properties.
In this direction a scalar, straight waveguide with impedance boundary condition has been studied in \cite{Demkowicz_Gopalakrishnan_Heuer_24} by conducting rather explicit analytical computations.
Therein the authors derived an explicit transcendental equation for the wavenumbers, which enabled them to apply the Glazman Theorem for dissipative operators \cite[p.~328]{GohbergKrein69}.
For bent waveguides the former approach seems infeasible and hence we will employ a more general approach, i.e., to apply perturbation theory of selfadjoint operators.

In classical eigenvalue perturbation theory as in \cite{Kato} one tracks \emph{a finite number} of eigenvalues under suitable small perturbations of the operator.
However, when one is interested in basis properties of the root vectors we have to deal with \emph{all} eigenvalues and root vectors simultaneously.
The smallness of the perturbation is hence not measured in magnitude, but rather in terms of the degree of compactness.
This notation is formalized as the \emph{$p$-subordination condition} and goes back at least to \cite{Markus_88} (see also the more recent book \cite{Jeribi21}, and the introduction of \cite{MityaginSiegl25}for further references).
For our analysis we will use a recent improvement: the \emph{local form subordination condition} of \cite{Mityagin_Siegl_19}.

The remainder of this articles is structured as follows.
In Section~\ref{section:problem_formulation} we introduce the waveguide problem to be investigated.
In Section~\ref{section:change_of_variables} we derive the associated modal eigenvalue problem and prove basis properties of the modes by means of \cite{Mityagin_Siegl_19} and Appendix~\ref{appendix:2}.
This will enable us to perform a length-explicit stability analysis.
First, in Section~\ref{section:analysis} we make the simplifying assumption that all wavenumbers are semi-simple, i.e., all Jordan chains have length one, to illustrate the approach of our analysis.
Then, in Section~\ref{section:analysis_with_Jordan_chains}, we extend our analysis to the general case admitting Jordan chains, see Theorem~\ref{thm:stability-circular-with-Jordan}.
In Section~\ref{section:interior} we further extend our analysis and include an interior heterogeneous part into our considered setting, see Theorem~\ref{thm:interior}.
Additionally, Appendix~\ref{appendix:2} reviews the relevant part of the work \cite{Mityagin_Siegl_19} and extends it by Lemmas~\ref{lemma:L2phi_est}, \ref{lemma:H1phi_est}, \ref{lem:renormed_bari_basis} and Theorems \ref{thm:RieszBasisInV}, \ref{thm:H1Bari_basis}.
Appendix~\ref{appendix:3} relates the Jordan chains of  an operator $A$ to those of the adjoint operator $A^\star$. 
Furthermore, Appendix~\ref{appendix:1} contains a numerical evidence demonstrating that although the presence of eigenvalues that are not semisimple is very unlikely, 
theoretically they may indeed arise and, therefore, cannot be excluded from the analysis. \\
\emph{Notation:} For domains $D$, we employ standard Hilbertian Sobolev spaces $H^k(D)$, $k  \in \doubleIN$, with inner products $\innerprod{\cdot,\cdot}_{H^k(D)}$ 
and norms $\|\cdot\|_{H^k(D)}$. We write $L^2(D) = H^0(D)$. The $L^2$-inner product is sometimes abbreviated 
$\innerprod{u,v}_D:= \innerprod{u,v}_{L^2(D)}$ and, if necessary, understood as a duality pairing. 
For the interval $(r_1,r_2) \subset \doubleIR_{>0}$, we use the special weighted $L^2$-inner products 
$\innerprod{u,v}_{r^{\pm 1}}:= \int_{r_1}^{r_2} r^{\pm 1} u(r) \overline{v}(r)\,dr$ and corresponding norms $\|\cdot\|_{r^{\pm}}$. 
\section{Formulation of the problem
\label{section:problem_formulation}
}

Our starting point is the waveguide
%
\begin{figure}[h!]
\begin{tikzpicture}[scale=0.6]
\draw[thick,->] (0,0)--(7,0); 
\draw[thick,->] (0,0)--(0,7); 
\draw[thick] (6,0) arc[start angle=0, end angle=60, radius=6];
\draw[thick] (4,0) arc[start angle=0, end angle=60, radius=4];
\draw[thick] (2,3.46)--(3,5.19); 
\draw[dashed] (0,0)--(2,3.46); 
\draw[->,thin] (1.8,0) arc[start angle=0, end angle=60, radius=1.77];
\node[right] at (0.2,0.4) {$\tr$}; 
\node[below] at (4,0) {$r_1$}; 
\node[below] at (6,0) {$r_2$}; 
\node[below] at (5,-0.5) {$\Gamma_{\rm in}^C$}; 
\node[right] at (5.5,3.5) {$\Gamma_{\rm imp}^C$}; 
\node[left] at (2.5,5.0) {$\Gamma_{\rm out}^C$}; 
\node[left] at (3.9,1.4) {$\Gamma_{\rm hard}^C$}; 
\node[below] at (7,0) {$x$};  
\node[left] at (0,7) {$y$}; 
\node[right] at (3.3,3) {$\Omega^C$}; 
\end{tikzpicture}
\hfill 
\begin{tikzpicture}[scale=0.5]
\draw[thick,->] (0,0)--(7,0); 
\draw[thick,->] (0,0)--(0,7); 
\draw[thick] (0,2) -- (5.5,2) -- (5.5, 4) -- (0,4) --cycle; 
\node[below] at (7,0) {$\theta$}; 
\node[left] at (0,7) {$r$}; 
\draw[thin] (5.5,-0.1) -- (5.5, 0.1); 
\node[below] at (5.5,0) {$\tr$}; 
\node[left] at (0,2) {$r_1$}; 
\node[left] at (0,4) {$r_2$}; 
\node[below] at (3.5,2) {$\Gamma_{\rm hard}$}; 
\node[above] at (3.5,4) {$\Gamma_{\rm imp}$}; 
\node[right] at (0,3) {$\Gamma_{\rm in}$}; 
\node[right] at (5.5,3) {$\Gamma_{\rm out}$}; 
\node[right] at (2.5,3) {$\Omega$}; 

\end{tikzpicture}
\hfill 
\begin{tikzpicture}[scale=0.5]
\draw[thick,->] (-2,0)--(7,0); 
\draw[thick,->] (0,0)--(0,7); 
\draw[thick] (0,2) -- (5.5,2) -- (5.5, 4) -- (0,4) --cycle; 
\node[below] at (7,0) {$\theta$}; 
\node[left] at (0,7) {$r$}; 
\draw[thin] (5.5,-0.1) -- (5.5, 0.1); 
\node[below] at (5.5,0) {$\tr$}; 
\node[left] at (0.1,1.7) {$r_1$}; 
\node[left] at (0.1,4.3) {$r_2$}; 
\draw[dashed] (-2,2) -- (0,2) -- (0,4) -- (-2,4)--cycle; 
\draw[thin] (-2,-0.1) -- (-2,0.1); 
\node[below] at (-2,0) {$\theta_0$}; 
\node[below] at (3.5,2) {$\Gamma_{\rm hard}$}; 
\node[above] at (3.5,4) {$\Gamma_{\rm imp}$}; 
\node[left] at (-2,3) {$\Gamma_{\rm in}$}; 
\node[right] at (5.5,3) {$\Gamma_{\rm out}$}; 
\node[right] at (2.5,3) {$\Omega_{\tr}$}; 
\node[right] at (-1.5,3) {$\Omega_0$}; 

\end{tikzpicture}
\hfill 
\caption{
\label{fig:waveguide}
Left: 
Circular waveguide with impedance BC on outer side and sound hard BC on the inner side in Cartesian coordinates. 
Center: Circular waveguide in polar coordinates. 
Right: Setup of Section~\ref{section:interior} with a heterogeneous region $\Omega_0$ is attached to the waveguide.}
\end{figure}
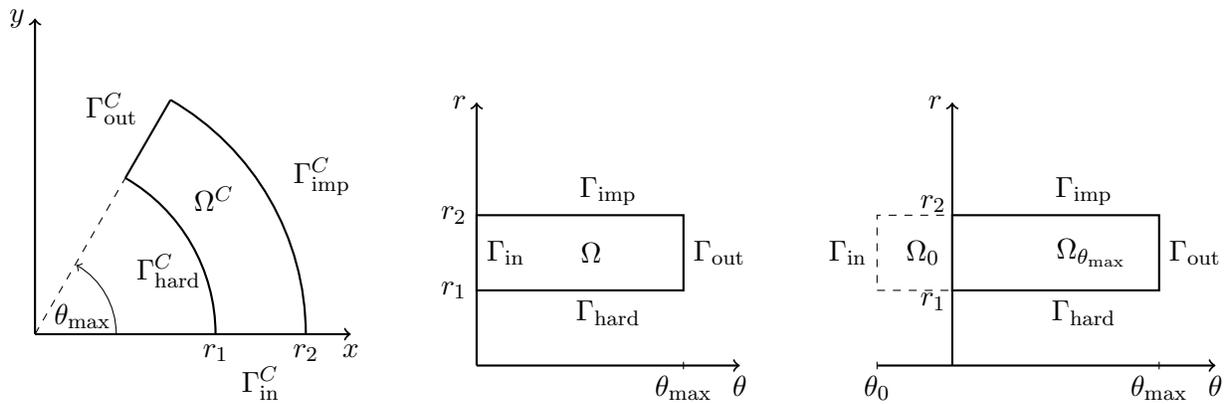
\begin{align*}
\Omega^C &:= \{ (r \cos \theta, r \sin \theta) \colon r \in (r_1,r_2),\, \theta \in (0,\tr )\},\\
\Gamma^C_\mathrm{hard} &:= \{ (r_1 \cos \theta, r_1 \sin \theta) \colon \theta \in (0,\tr )\},\\
\Gamma^C_\mathrm{imp} &:= \{ (r_2 \cos \theta, r_2 \sin \theta) \colon \theta \in (0,\tr )\},\\
\Gamma^C_\mathrm{in} &:= (r_1,r_2)\times\{0\},\\
\Gamma^C_\mathrm{out} &:= \{ (r \cos \tr, r \sin \tr) \colon r\in(r_1,r_2)\},
\end{align*}
as illustrated in the left part of Fig.~\ref{fig:waveguide}, where $0 < r_1 < r_2 < \infty$ are the inner and outer radii of the waveguide  and $\tr\in(0,2\pi)$ is the angular length of the waveguide.
We consider the Helmholtz problem to find a solution $\af\in H^1(\Omega)$ to
\begin{subequations}	
\label{eq:Helmholtz-phys}
\begin{align}
\label{eq:HH-phys-PDE}
-\Delta \af-\omega^2\rfi \af &= f &&\hspace{-30mm}\text{in }\Omega^C,\\
\partial_n \af &= 0 &&\hspace{-30mm}\text{on }\Gamma^C_\mathrm{hard},\\
\partial_n \af &= i\omega\impedance \af &&\hspace{-30mm}\text{on }\Gamma^C_\mathrm{imp},\\
\af &= 0 &&\hspace{-30mm}\text{on }\Gamma^C_\mathrm{in},\\
\partial_n \af &= \DtN \af &&\hspace{-30mm}\text{on }\Gamma^C_\mathrm{out},
\end{align}
\end{subequations}
where $\omega>0$ denotes the angular frequency (with the time-dependent convention $e^{i\omega t}$), $\impedance>0$ is an impedance constant, $\rfi\in L^\infty(\Omega^C)$ is the refractive index which we assume to be (in polar coordinates) independent of $\theta$, $f\in (H^1(\Omega^C))'$ is the load, and $\DtN \in L(H^{1/2}(\Gamma^C_\mathrm{out}),(H^{1/2}(\Gamma^C_\mathrm{out}))')$ is the Dirichlet-to-Neumann operator (which will be specified in Section~\ref{sec:dtn-operator}) implying a transparent boundary condition at $\Gamma_\mathrm{out}^C$.
To model long coiled fibers we henceforth switch to polar coordinates
\begin{align*}
\Omega &:= (r_1,r_2)\times (0,\tr ),\quad
\Gamma_\mathrm{hard} := \{r_1\}\times (0,\tr ),\quad
\Gamma_\mathrm{imp} := \{r_2\}\times (0,\tr ),\\
\Gamma_\mathrm{in} &:= (r_1,r_2)\times\{0\},\quad
\Gamma_\mathrm{out} := (r_1,r_2)\times\{\tr\},
\end{align*}
and allow $\tr>2\pi$ as indicated in the right part of Fig.~\ref{fig:waveguide}.  
In polar coordinates the former problem reads: Find $\af\in H^1(\Omega)$ such that
\begin{subequations}	
\label{eq:Helmholtz-polar}
\begin{align}
\label{eq:HH-polar-PDE}
-\partial_r(r\partial_r \af)-r^{-1}\partial^2_\theta \af-\omega^2\rfi r \af &= r f &&\hspace{-20mm}\text{in }\Omega=(r_1,r_2)\times(0,\tr),\\
r\partial_r \af &= 0 &&\hspace{-20mm}\text{on }\Gamma_\mathrm{hard},\\
r\partial_r \af &= ri\omega\impedance \af &&\hspace{-20mm}\text{on }\Gamma_\mathrm{imp},\\
\af &= 0 &&\hspace{-20mm}\text{on }\Gamma_\mathrm{in},\\
r^{-1}\partial_\theta \af &= \DtN \af &&\hspace{-20mm}\text{on }\Gamma_\mathrm{out},
\end{align}
\end{subequations}
where we do not introduce new symbols for $\af$, $f$, $\rfi$, and $\DtN$ in polar coordinates.
Let
\begin{align*}
\Hdir:=\{\af\in H^1(\Omega)\colon \af=0\text{ on }\Gamma_\mathrm{in} \}.
\end{align*}
Then the variational formulation of \eqref{eq:Helmholtz-polar} reads: 
Find $\af\in\Hdir$ such that
\begin{align}
\label{eq:Helmholtz-polar-var}
\begin{aligned}
\ap(u,v) &:= 
\innerprod{ r\partial_r \af,\partial_r \tf }_{\Omega}
+\innerprod{ r^{-1}\partial_\theta \af,\partial_\theta \tf }_{\Omega}
-\omega^2 \innerprod{ r\rfi \af, \tf }_{\Omega}
+i\omega\impedance r_2 \innerprod{ \af, \tf }_{\Gamma_{\rm imp}}
-\innerprod{ \DtN \af, \tf }_{\Gamma_{\rm out}} \\
%
%
& = \innerprod{ r f, \tf }_{\Omega},
\end{aligned}
\end{align}
for all $\tf\in\Hdir$. 
The goal of this paper is to prove the {\sl a priori} estimates of the form
\begin{align}
\label{eq:stability_system}
\|\af\|_{H^1(\Omega)} \leq C \tr \Vert f \Vert_{(\Hdir)'},
\end{align}
where $C>0$ is independent of $\tr$, $f$, and the solution $\af$ to \eqref{eq:Helmholtz-polar-var} but may depend upon the parameters $r_1$, $r_2$, $\omega$, $\rfi$ and $\impedance$. 
In fact, \eqref{eq:stability_system} is the result of our analysis for \emph{realistic} applications: Actually, our analysis, formulated 
in Theorems~\ref{thm:stability-circular} and \ref{thm:stability-circular-with-Jordan}, 
will yield a bound that is \emph{uniform} in $\tr$, but with an unpleasantly large constant $C$ (see Rem.~\ref{rem:constbound}). 
With moderate constant we obtain a factor $\tr^{J}$ with $J$ being the maximal length of modal Jordan chains 
(see Thm.~\ref{thm:stability-circular-with-Jordan}) and in applications it usually holds $J=1$.\mhend
\begin{remark}
Our analysis focuses on the second order Helmholtz problem (\ref{eq:Helmholtz-polar}). Of interest is also the first order formulation 
as done in the precursor works \cite{Melenk_Demkowicz_Henneking_25,Demkowicz_Gopalakrishnan_Heuer_24}, for which 
(\ref{eq:stability_system}) implies corresponding stability results. 
\eremk
\end{remark}
\section{Modal analysis}
\label{section:change_of_variables}
The convenient approach to investigate waveguide problems, which we follow here, is an analysis of the modes.
Here, a mode is defined to be a solution $\af$ obtained from a separation of variables ansatz
\begin{align}\label{eq:modeansatz}
\af(r,\theta) = p(\theta) \phi(r)
\end{align}
to the homogeneous problem in the semi-infinite waveguide $(r_1,r_2)\times (\theta_0,+\infty)$:
\begin{subequations}
\label{eq:PDEmode}
\begin{align}
-\partial_r(r\partial_r \af)-r^{-1}\partial_{\theta}\partial_{\theta}\af -\omega^2 r\rfi \af&=0 \quad\text{in } (r_1,r_2)\times (\theta_0,+\infty),\\
\partial_r \af &=0 \quad\text{on } \{r_1\}\times (\theta_0,+\infty),\\
\partial_r \af -i\omega\impedance \af&=0 \quad\text{on } \{r_2\}\times (\theta_0,+\infty),
\end{align}
\end{subequations}
(where $\theta_0$ is an arbitrary parameter specifying the start of the waveguide, e.g., $\theta_0=0$ or $\theta_0=\tr$).
Note that no boundary condition on $(r_1, r_2) \times \{\theta_0\}$ is imposed. With a slight abuse notation, 
sometimes only the $r$-dependent part $\phi$ of \eqref{eq:modeansatz} is also referred to as the mode. 
Plugging \eqref{eq:modeansatz} into \eqref{eq:PDEmode} we obtain
\begin{equation}
\label{eq:separation-of-variables}
-\frac{r (r \phi')'}{\phi} - \omega^2 r^2 \rfi = \frac{p''}{p} = \lambda, 
\end{equation}
where $\lambda \in \doubleIC$ is the separation constant.
This leads to the modal eigenvalue problem:
Find $(\lambda,\phi)\in\mathbb{C}\times H^1(r_1,r_2)\setminus\{0\}$ such that
\begin{subequations}
\label{eq:bessel-EVP}
\begin{align}
\label{eq:bessel-EVP-ODE}
-(r\phi')' -\omega^2 r\rfi \phi -r^{-1}\lambda \phi & =  0,  \quad  r \in (r_1,r_2),\\
\phi' & = 0\quad \text{at } r = r_1,\\
\label{eq:bessel-EVP-impBC}
\phi' {+} i \omega \impedance \phi & = 0\quad \text{at } r=r_2 \, .
\end{align}
\end{subequations}
The weak form of \eqref{eq:bessel-EVP} reads: 
Find $(\lambda,\phi)\in\mathbb{C}\times H^1(r_1,r_2)\setminus\{0\}$ such that
\begin{align}
\label{eq:bessel-EVP-weak}
\innerprod{ \phi',\phitf' }_r-\omega^2 \innerprod{ \rfi \phi,\phitf }_r
+i\omega\impedance r_2 \phi(r_2)\ol{\phitf(r_2)}
=\lambda \innerprod{ \phi,\phitf }_{r^{-1}}
\end{align}
for all $\phitf\in H^1(r_1,r_2)$,
where $\innerprod{ \cdot,\cdot }_{r^{\pm1}}$ denote weighted, Hermitian $L^2(r_1,r_2)$ scalar products. 
The left-hand side of \eqref{eq:bessel-EVP-weak} corresponds to a densely defined, closed
operator $\Ar\colon (L^2(r_1,r_2), \innerprod{ \cdot,\cdot}_{r^{-1}}) \rightarrow 
(L^2(r_1,r_2), \innerprod{ \cdot,\cdot}_{r^{-1}}) $ given by 
\begin{subequations}
\label{eq:Ar}
\begin{align}
\Ar \phi &:= - r (r \phi^\prime)^\prime - \ome^2 \rfi r^2 \phi, \\
\dom(\Ar) & := \{\phi \in H^2(r_1,r_2)\colon \phi^\prime(r_1) = 0, \quad 
\phi^\prime(r_2 ) + i \ome\impedance \phi(r_2) = 0\}. 
\end{align}
\end{subequations}
We note that the eigenvalues of (\ref{eq:bessel-EVP-weak}) are not real: 
\begin{lemma}
\label{lem:lambdaisnonreal}
All eigenvalues $\lambda$ to \eqref{eq:bessel-EVP-weak} satisfy $\lambda\notin\mathbb{R}$. Additionally, all eigenvalues $\lambda$ have geometric 
multiplicity $1$, i.e., exactly one Jordan chain is associated with each eigenvalue $\lambda$. 
\end{lemma}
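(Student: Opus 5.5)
To show $\lambda\notin\mathbb{R}$, I will test the weak form \eqref{eq:bessel-EVP-weak} with $\phitf=\phi$ and take imaginary parts. Since $\rfi$ is real-valued, the terms $\|\phi'\|_r^2$ and $\omega^2(\rfi\phi,\phi)_r$ are real. The norm $\|\phi\|_{r^{-1}}^2$ is real and positive because $\phi\neq 0$. Hence
\begin{align*}
\omega\impedance r_2 |\phi(r_2)|^2 = \Im\lambda\,\|\phi\|_{r^{-1}}^2 .
\end{align*}
If $\lambda$ were real, this would force $\phi(r_2)=0$, because $\omega,\impedance,r_2>0$. The impedance condition \eqref{eq:bessel-EVP-impBC} then gives $\phi'(r_2)=-i\omega\impedance\phi(r_2)=0$. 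Using the weak form, $\phi$ solves the second-order ODE \eqref{eq:bessel-EVP-ODE} in the distributional sense. This ODE is linear with coefficients $r$ (bounded below by $r_1>0$), $r\rfi\in L^\infty$ and $r^{-1}$, so it can be written as a first-order system for $(\phi, r\phi')$ with $L^\infty$ coefficients. Carathéodory uniqueness for this system with zero Cauchy data at $r_2$ gives $\phi\equiv 0$, a contradiction. As a byproduct, every eigenvalue satisfies $\Im\lambda>0$.

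For geometric multiplicity one, I will take two eigenfunctions $\phi_1,\phi_2$ for the same $\lambda$. Both solve the same linear second-order ODE and both satisfy $\phi_j'(r_1)=0$. Any solution of this ODE is determined by its Cauchy data $(\phi(r_1),(r\phi')(r_1))$, so $\phi_1$ and $\phi_2$ are each determined by their value at $r_1$. Taking the combination $\phi_2(r_1)\phi_1-\phi_1(r_1)\phi_2$, its Cauchy data at $r_1$ vanish, so it is identically zero. Neither $\phi_j(r_1)$ can vanish, since otherwise that eigenfunction would be zero by the same uniqueness argument. Hence $\phi_1$ and $\phi_2$ are linearly dependent, the kernel of $\Ar-\lambda$ is one-dimensional, and exactly one Jordan chain is attached to each $\lambda$.

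The only delicate point is justifying the ODE uniqueness with a merely $L^\infty$ refractive index $\rfi$. I will handle it by working with the absolutely continuous quantity $r\phi'$ and a Gronwall argument for the first-order system. I will also note that the weak eigenfunctions coincide with those of $\Ar$ in \eqref{eq:Ar}. This holds because $r\phi'\in W^{1,\infty}$, so $\phi\in H^2$ and the boundary conditions hold pointwise, as follows by integrating by parts in \eqref{eq:bessel-EVP-weak}.
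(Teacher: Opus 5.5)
Your proposal is correct and follows essentially the same route as the paper. Non-reality is shown the same way: testing with $\phitf=\phi$ and taking imaginary parts forces $\phi(r_2)=0=\phi'(r_2)$, and ODE uniqueness then gives $\phi\equiv 0$. For geometric multiplicity, the paper notes that $\phi(r_2)$ determines $\phi$ via the impedance condition, while you use the Cauchy data at $r_1$ via $\phi'(r_1)=0$; this is the same argument at the other endpoint. Your Gronwall/Carath\'eodory justification for the system in $(\phi,r\phi')$ with $L^\infty$ coefficients, and the identification of weak eigenfunctions with those of $\Ar$, supply details the paper leaves implicit.
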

\begin{proof}
\emph{Proof of $\lambda \not\in \doubleIR$:}
Let $(\lambda,\phi)$ be an eigenpair with $\lambda\in\mathbb{R}$.
Then by letting $\phitf=\phi$ and taking the imaginary part of \eqref{eq:bessel-EVP-weak} we obtain that $\phi(r_2)=0$.
Then due to \eqref{eq:bessel-EVP-impBC} also $\phi'(r_2)=0$.
Since $\phi$ solves the ordinary differential equation \eqref{eq:bessel-EVP-ODE} it follows that $\phi=0$ in $(r_1,r_2)$, which is a contradiction.

\emph{Proof of the multiplicity of $\lambda$:} Let $(\lambda,\phi)$ be an eigenpair. The function $\phi$ solves a second order ODE with the 
two initial conditions $\phi(r_2) = \phi(r_2)$ and $\phi^\prime(r_2) = - i \ome \impedance \phi(r_2)$ at $r = r_2$, so that $\phi(r_2)$ determines $\phi$ fully.
\end{proof}
Once $\lambda$ is known, we solve equation \eqref{eq:separation-of-variables} for the factor $p$ and obtain
\be
p(\theta) =  C_1 e^{i \beta \theta} + C_2 e^{- i \beta \theta},
\qquad C_1,C_2 \in \doubleIC,
\label{eq:Theta}
\ee
where $\lambda = - \beta^2$ with the square root convention that $\Im\beta>0$.
(Note that $\Im\beta=0$ does not occur due to Lem.~\ref{lem:lambdaisnonreal}.)
The modes $e^{i\beta\theta}\phi(r)$ are called outgoing and the modes $e^{-i\beta\theta}\phi(r)$ are called incoming.
The radiation condition in $(r_1,r_2)\times(\theta_0,+\infty)$ demands the energy of solutions to be finite, and hence an \emph{outgoing} solution $u$ should consist only of outgoing modes, i.e., $u(r,\theta)=\sum_{n=1}^\infty e^{i\beta_n\theta}\phi_n(r)$ in $(r_1,r_2)\times(\theta_0,+\infty)$.
Thus to derive a transparent boundary on $\Gamma_\mathrm{out}$ we extend $\Omega$ to $(r_1,r_2)\times(0,+\infty)$ and demand that the solution be outgoing in $(r_1,r_2)\times(\tr,+\infty)$.
This leads to the definition of the Dirichlet-to-Neumann operator:
\begin{align}
\label{eq:DtN-eigenmodes}
\DtN\Big(\sum_{n=1}^\infty c_n \phi_n\Big):=
{\frac{1}{r}} \sum_{n=1}^\infty c_n i\beta_n \phi_n, \quad c_n\in\mathbb{C}.
\end{align}
This formula assumes the eigenmodes $\phi_n$ to be eigenfunctions of the 
eigenvalue problem (\ref{eq:bessel-EVP-ODE}) --- a corresponding formula in case 
Jordan chains appear will be given in Section~\ref{section:analysis_with_Jordan_chains}.
In this direction we introduce the following convention.
\begin{definition}[root vectors]\label{def:rootvecs}
We say that 
$\{\phi_n\}_{n\in\mathbb{N}}$ are the root vectors (generalized eigenvectors) of an operator $A$, 
if 
\begin{enumerate*}[label=\alph*)]
\item
the spectrum $\sigma(A)$ of $A$ consists of countably many eigenvalues each with a finite algebraic multiplicity, 
\item the enumeration of eigenvalues $\{\lambda_n\}_{n\in\mathbb{N}}=\sigma(A)$ respects the algebraic multiplicity, 
\item for each $m\in\mathbb{N}$ the set $\{\phi_n\colon \lambda_n=\lambda_m\}$ is a basis of the root space associated to the eigenvalue $\lambda_m$, 
and 
\item for each $n\in\mathbb{N}$ either $\phi_{n+1}$ is an eigenvector or a subsequent element of a Jordan chain, i.e., $(A-\lambda_{n+1})\phi_{n+1}=\phi_n$.
\end{enumerate*}
We say that $\{(\lambda_n,\phi_n)\}_{n\in\mathbb{N}}$ are the normalized eigenvalue-root vector pairs of $A$, if in addition all \emph{eigenvectors} are normalized.
The wording ``\emph{the} root vectors'' is used for convenience although their choice is not unique.
\end{definition}
To justify that each $\phi\in H^{1/2}((r_1,r_2)\times \{\tr\})$ admits an expansion $\phi=\sum_{n=}^\infty c_n \phi_n$ 
and to ensure that $\DtN$ is indeed a well-defined bounded operator (see Lemma~\ref{lemma:mapping-properties-DtN}), we have to analyze the properties of the modal eigenvalue problem \eqref{eq:bessel-EVP-weak}.
To this end we will apply perturbation theory for selfadjoint operators and in particular the concept of \emph{local form subordinate} perturbations \cite{Mityagin_Siegl_19}, which we slightly extend in Section~\ref{appendix:2}.
To that end, we introduce a change of variables to carve out a suitably ``unperturbed'' part.
Consider the change of variables 
\begin{align}
\label{eq:change-of-variables}
\widetilde\phi(t):=\phi(r),\qquad
t := \frac{\pi}{x_2-x_1} \left(r_0 \ln \left( \frac{r}{r_0} \right) - x_1\right),
\end{align}
%
where
\begin{align}
x_1 := r_0 \ln \left( \frac{r_1}{r_0} \right), \qquad
x_2 := r_0 \ln \left( \frac{r_2}{r_0} \right), \qquad
\deltanew:=\frac{1}{\pi} \ln \left( \frac{r_2}{r_1} \right).
\end{align}
Then, in the new coordinate $t\in(0,\pi)$, the equation \eqref{eq:bessel-EVP-weak} reads:
Find $(\widetilde\lambda,\widetilde\phi)\in \mathbb{C}\times H^1(0,\pi)\setminus\{0\}$ such that, 
for all $\widetilde\phitf \in H^1(0,\pi)$, 
\begin{align}
\label{eq:eigenproblem}
\underbrace{
\innerprod{ \widetilde\phi',\widetilde\phitf' }_{L^2(0,\pi)} + \innerprod{ \widetilde\phi,\widetilde\phitf}_{L^2(0,\pi)}}_{\widetilde{a}(\widetilde\phi,\widetilde\phitf):=}
\underbrace{
-\innerprod{ (\omega^2 r_1^2 \deltanew^2 e^{2\deltanew t}\widetilde\rfi+1) \widetilde\phi, \widetilde\phitf }_{L^2(0,\pi)}
{+} i \omega \impedance \deltanew r_2 \widetilde\phi(\pi) \ol{\widetilde\phitf(\pi)}}_{\widetilde{b}(\widetilde\phi,\widetilde\phitf):=}
=\widetilde{\lambda} \innerprod{ \widetilde\phi,\widetilde\phitf }_{L^2(0,\pi)}, 
\end{align}
with the relation
\begin{align}\label{eq:lambda-tlambda}
\widetilde\lambda=\deltanew^2\lambda.
\end{align}
In the following we will derive several \emph{asymptotic} properties of the eigenvalues $\{\widetilde\lambda_n\}_{n\in\mathbb{N}}$ and root vectors $\{\widetilde\phi_n\}_{n\in\mathbb{N}}$ of the eigenvalue problem \eqref{eq:eigenproblem}.
Then corresponding properties of the eigenvalues $\{\lambda_n\}_{n\in\mathbb{N}}$ and root vectors $\{\phi_n\}_{n\in\mathbb{N}}$ of \eqref{eq:bessel-EVP-weak} follow from the relations \eqref{eq:change-of-variables} and \eqref{eq:lambda-tlambda} in Corollary~\ref{cor:basis} below. 
The eigenvalue problem to find $(\widetilde\mu,\widetilde\psi)\in\mathbb{C}\times H^1(0,\pi)\setminus\{0\}$ such that
\begin{align}\label{eq:evp-psi}
\widetilde{a}(\widetilde\psi,\widetilde\phitf)=\widetilde\mu \innerprod{ \widetilde\psi,\widetilde\phitf }_{L^2(0,\pi)}
\qquad \forall\widetilde\phitf\in H^1(0,\pi),
\end{align}
serves as our \emph{unperturbed, selfadjoint} eigenvalue problem.
It is straight-forward to check that its $L^2(0,\pi)$-normalized eigenpairs are given by
\begin{align}
\label{eq:mutilde}
\widetilde\mu_1:=1, \qquad\widetilde\psi_1(t):=\sqrt{\frac{1}{\pi}}, \qquad
\widetilde\mu_n := (n-1)^2+1, \qquad \widetilde\psi_n(t) := \sqrt{\frac{2}{\pi}} \cos ((n-1) t), \quad n=2,3,\ldots\, .
\end{align}
The form $\widetilde{a}(\cdot,\cdot)$ can be interpreted as a densely defined, unbounded form $L^2(0,\pi)\times L^2(0,\pi)\to\mathbb{C}$ 
with domain space $\dom(\widetilde{a})=H^1(0,\pi)$.
It also induces an unbounded, densely defined, closed operator $\widetilde{A}\colon L^2(0,\pi)\to L^2(0,\pi)$ given by
\begin{align}
\label{eq:tildeA}
\widetilde{A} \widetilde\phi &:= -\phi^{\prime\prime},\qquad
\dom(\widetilde{A}) := \{\widetilde\phi \in H^2(0,\pi)\colon \widetilde\phi^\prime(0) = \widetilde\phi^\prime(\pi)=0\}. 
\end{align}
Following Kato's second representation theorem \cite[Thm.~2.23]{Kato} we have $\dom(\widetilde{a})=\dom(\widetilde{A}^{1/2})$ and $\widetilde{a}(\widetilde\phi,\widetilde\phitf)=\innerprod{ \widetilde{A}^{1/2}\widetilde\phi,\widetilde{A}^{1/2}\widetilde\phitf }_{L^2(0,\pi)}$ for all $\widetilde\phi,\widetilde\phitf\in\dom(\widetilde{A}^{1/2})$.
Then the spectral theory for unbounded, selfadjoint operators yields that $\{\widetilde\psi_n\}_{n\in\mathbb{N}}$ forms an orthonormal basis in $L^2(0,\pi)$.
On the other hand, we can work with bounded operators on $H^1(0,\pi)$ and then \eqref{eq:evp-psi} reads in operator form $\widetilde\psi=\widetilde\mu E^*E \widetilde\psi$ with the compact embedding operator $E\in L(H^1(0,\pi),L^2(0,\pi))$.
Hence the spectral theory for compact, selfadjoint operators yields that $\{\widetilde\mu_n^{-1/2}\widetilde\psi_n\}_{n\in\mathbb{N}}$ forms an orthonormal basis in $H^1(0,\pi)$.
Next we strive to establish similar properties for $\{\widetilde\lambda_n\}_\ninn$ and $\{\widetilde\phi_n\}_\ninn$.
To this end, we note that the left-hand side of \eqref{eq:eigenproblem} induces a densely defined, closed operator $\widetilde{T}\colon L^2(0,\pi) \rightarrow L^2(0,\pi)$ given by 
\begin{subequations}
\label{eq:tildeA}
\begin{align}
	\widetilde{T} \widetilde\phi &:= -\widetilde\phi^{\prime\prime} - \ome^2 r_1^2 \deltanew^2 e^{2\deltanew\cdot}\widetilde{\rfi} \widetilde\phi,  \\
	\dom(\widetilde{T}) & := \{\widetilde\phi \in H^2(0,\pi)\colon \widetilde\phi^\prime(0) = 0, \quad \widetilde\phi^\prime(\pi) + i\ome\impedance\deltanew r_2 \widetilde\phi(\pi) = 0\},
\end{align}
\end{subequations}
and $\widetilde{\mathfrak{t}}(\widetilde\phi,\widetilde\phitf):=\widetilde{a}(\widetilde\phi,\widetilde\phitf)+\widetilde{b}(\widetilde\phi,\widetilde\phitf)=\innerprod{ \widetilde{T}\widetilde\phi,\widetilde\phitf }_{L^2(0,\pi)}$ for all $\widetilde\phi\in\dom(\widetilde{T})$, $\widetilde\phitf\in\dom(\widetilde{\mathfrak{t}})=\dom(\widetilde a)$ due to Kato's first representation theorem \cite[Thm.~2.1]{Kato}.
\begin{theorem}\label{thm:basis}
Let $\{(\widetilde\mu_n, \widetilde\psi_n)\}_\ninn$ be the eigenpairs for the self-adjoint eigenvalue problem \eqref{eq:evp-psi} given by \eqref{eq:mutilde}, which are scaled such that $\|\widetilde{\psi}_n\|_{L^2(0,\pi)} = 1$. 
Then there exist $L^2$-normalized (in the sense of Definition~\ref{def:rootvecs}) eigenvalue--root vector pairs 
$\{(\widetilde\lambda_n,\widetilde\phi_n)\}_\ninn$ 
of the eigenvalue problem \eqref{eq:eigenproblem}
with the following properties:
\begin{enumerate}[nosep, label=(\roman*),leftmargin=*]
\item there exists $n_0>0$ such that $\widetilde\lambda_n$ is simple for all $n>n_0$;
\item $\{\widetilde\phi_n\}_\ninn$ forms a Bari basis of $L^2(0,\pi)$ and in particular $\sum_{n=1}^\infty \|\widetilde\psi_n-\widetilde\phi_n\|^2_{L^2(0,\pi)}<+\infty$;
\item $\{\widetilde\mu_n^{-1/2}\widetilde\phi_n\}_\ninn$ forms a Bari basis of $H^1(0,\pi)$ and in particular $\sum_{n=1}^\infty \|\widetilde\mu_n^{-1/2}\widetilde\psi_n-\widetilde\mu_n^{-1/2}\widetilde\phi_n\|^2_{H^1(0,\pi)}$ $<$ $+\infty$;
\item $\displaystyle\lim_{n \to \infty} \frac{\vert \widetilde{\lambda}_n \vert^\half}{\widetilde\mu_n^\half} = \lim_{n \to \infty} \frac{\vert \widetilde{\lambda}_n \vert^\half}{\Vert \widetilde{\phi}_n \Vert_{H^1}} = 1$,
and the sequence $\{\widetilde{\phi}_n/\|\widetilde{\phi}_n\|_{H^1(0,\pi)}\}_\ninn$ remains quadratically close to the sequence $\{\widetilde\psi_n/\widetilde\mu_j^\half\}_\ninn$ in the $H^1(0,\pi)$-norm.
\item $\lim_{n \rightarrow \infty} \frac{\widetilde{\lambda}_n}{\widetilde{\mu}_n} = 1$. 
\end{enumerate}
\end{theorem}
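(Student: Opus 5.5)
The plan is to verify the local form subordination hypothesis of \cite{Mityagin_Siegl_19} for the pair $(\widetilde a,\widetilde b)$. The abstract results recalled and extended in Appendix~\ref{appendix:2} (Lemmas~\ref{lemma:L2phi_est}, \ref{lemma:H1phi_est}, \ref{lem:renormed_bari_basis} and Theorems~\ref{thm:RieszBasisInV}, \ref{thm:H1Bari_basis}) will then deliver (i)--(v). The unperturbed operator $\widetilde A$ is selfadjoint and nonnegative with simple eigenvalues $\widetilde\mu_n=(n-1)^2+1$. Their gaps $\widetilde\mu_{n+1}-\widetilde\mu_n=2n-1$ grow linearly, which is the spectral regime ($\alpha=2$ in the Mityagin--Siegl notation) in which subordination with exponent $\gamma<1/2$, or even bounded perturbations, yields Bari bases.

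\textbf{Step 1: local subordination of $\widetilde b$.} Split $\widetilde b=\widetilde b_1+\widetilde b_2$. Here $\widetilde b_1(\phi,\xi)=-(q\phi,\xi)$ with $q=\omega^2r_1^2\delta^2e^{2\delta t}\widetilde\rfi+1\in L^\infty$, and $\widetilde b_2(\phi,\xi)=i\omega\impedance\delta r_2\,\phi(\pi)\overline{\xi(\pi)}$. Because $\|\widetilde\psi_n\|_{L^2}=1$, the bounded part gives $|\widetilde b_1(\widetilde\psi_m,\widetilde\psi_n)|\le\|q\|_\infty$. The boundary part gives $|\widetilde b_2(\widetilde\psi_m,\widetilde\psi_n)|\le \frac{2}{\pi}\omega\impedance\delta r_2$, since $|\widetilde\psi_n(\pi)|\le\sqrt{2/\pi}$. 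Hence the matrix entries are uniformly bounded, i.e. $|\widetilde b(\widetilde\psi_m,\widetilde\psi_n)|\le M\,\widetilde\mu_m^{\gamma/2}\widetilde\mu_n^{\gamma/2}$ with $\gamma=0$. This is exactly the local form subordination condition, with exponent $0<1/2$. I would also record the global form bound $|\widetilde b(\phi,\phi)|\le \varepsilon\widetilde a(\phi,\phi)+C_\varepsilon\|\phi\|^2$, obtained from the trace inequality $|\phi(\pi)|^2\le\varepsilon\|\phi'\|^2+C_\varepsilon\|\phi\|^2$. This bound shows that $\widetilde{\mathfrak t}$ is sectorial and closed, that $\widetilde T$ is its associated m-sectorial operator with compact resolvent, and hence that the spectrum is discrete with finite algebraic multiplicities. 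That is part (a) of Definition~\ref{def:rootvecs}.

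\textbf{Step 2: apply the abstract theorems.} The Mityagin--Siegl theorem gives several things. First, for large $N$ the eigenvalues of $\widetilde T$ split into one cluster in a disc around $\{\widetilde\mu_1,\dots,\widetilde\mu_N\}$ containing exactly $N$ eigenvalues counted with algebraic multiplicity. Second, for $n>N$ there is exactly one simple eigenvalue $\widetilde\lambda_n$ in a disc around $\widetilde\mu_n$ of radius $o(n)$, which proves (i). Third, the Riesz spectral projections $P_n$ satisfy $\sum_{n>N}\|P_n-Q_n\|^2<\infty$, where $Q_n$ is the orthogonal projector onto $\widetilde\psi_n$. Choosing $\widetilde\phi_n=P_n\widetilde\psi_n$ for $n>N$, suitably normalized, and any Jordan basis of the finite cluster gives (ii), by a Bari-type quadratic closeness argument; Lemma~\ref{lemma:L2phi_est} supplies the explicit estimate of $\|\widetilde\phi_n-\widetilde\psi_n\|$. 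The perturbation estimate also gives $|\widetilde\lambda_n-\widetilde\mu_n|=o(\widetilde\mu_n^{1/2})$, in fact $O(1)$ by the boundedness in Step 1, and this implies (v) together with the first limit in (iv).

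\textbf{Step 3: $H^1$ statements.} This is the step I expect to be the main obstacle, because quadratic closeness in $L^2$ does not transfer to the $H^1$-rescaled sequence automatically. I would invoke Lemma~\ref{lemma:H1phi_est} and Theorems~\ref{thm:RieszBasisInV}, \ref{thm:H1Bari_basis}. The key estimate is $\widetilde\mu_n^{-1}\|\widetilde\phi_n-\widetilde\psi_n\|_{H^1}^2\lesssim\|\widetilde\phi_n-\widetilde\psi_n\|_{L^2}^2+\widetilde\mu_n^{-1}(\text{remainder})$, obtained by testing the eigen-equation. Concretely, subtract $\widetilde a(\widetilde\psi_n,\cdot)=\widetilde\mu_n(\widetilde\psi_n,\cdot)$ from $\widetilde{\mathfrak t}(\widetilde\phi_n,\cdot)=\widetilde\lambda_n(\widetilde\phi_n,\cdot)$ and test with $e_n=\widetilde\phi_n-\widetilde\psi_n$. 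This yields $\widetilde a(e_n,e_n)\le|\widetilde\lambda_n|\|e_n\|^2+|\widetilde\lambda_n-\widetilde\mu_n|\,|(\widetilde\psi_n,e_n)|+|\widetilde b(\widetilde\phi_n,e_n)|$. The last term is absorbed using the form bound from Step 1, and the middle term is controlled by $O(1)\|e_n\|$. Dividing by $\widetilde\mu_n$ shows that $\sum_n\widetilde\mu_n^{-1}\|e_n\|_{H^1}^2$ is finite, since $\sum\|e_n\|^2<\infty$ and $\sum\widetilde\mu_n^{-1}<\infty$. This gives (iii), because $\{\widetilde\mu_n^{-1/2}\widetilde\psi_n\}$ is orthonormal in $H^1$ and, for $\omega$-linear independence and completeness, Lemma~\ref{lem:renormed_bari_basis} applies. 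Finally, quadratic closeness in $H^1$ forces $\|\widetilde\phi_n\|_{H^1}/\widetilde\mu_n^{1/2}\to1$. Combined with Step 2, this proves the second limit in (iv), and renormalizing preserves quadratic closeness, which completes (iv).
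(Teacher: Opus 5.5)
Your proposal is correct, and Steps~1 and~2 coincide with the paper's proof. The paper likewise checks the gaps $\widetilde\mu_{n+1}-\widetilde\mu_n=2n-1$ (i.e.\ $\gamma=2$ in its notation) and the uniformly bounded entries $\widetilde b(\widetilde\psi_m,\widetilde\psi_n)$ (i.e.\ $\alpha=0$), then invokes \cite{Mityagin_Siegl_19} for (i), (ii), (v). Your Step~3, however, is a genuinely different route. The paper obtains (iii)--(iv) purely by citing its appendix:
Lemma~\ref{lemma:H1phi_est} bounds $\|\mu_n^{-1/2}A^{1/2}\phi_n^{(k)}\|$ term by term in the Neumann-series representation of the Riesz projection, Theorem~\ref{thm:H1Bari_basis} sums these bounds, and Lemma~\ref{lem:renormed_bari_basis} handles the renormalisation. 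You instead upgrade $L^2$ quadratic closeness to $H^1$ quadratic closeness directly from the eigen-equations, via the energy identity for $e_n=\widetilde\phi_n-\widetilde\psi_n$. This is shorter and self-contained, but it relies on features specific to this problem: $|\widetilde\lambda_n-\widetilde\mu_n|=O(1)$, the one-dimensional trace inequality, and above all $\sum_n\widetilde\mu_n^{-1}<\infty$ to sum an $O(1)$ remainder. The appendix argument is abstract, covers the whole regime $2\alpha+\gamma>3/2$ (where $\sum_n\mu_n^{-1}$ may diverge), and carries over unchanged to other perturbations.

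Three small repairs are needed. (a) In the paper's normalisation, the Bari statements, including the Bari--Markus property $\sum_n\|P_n-Q_n\|^2<\infty$ you quote, require $2\alpha+\gamma>3/2$. This holds here since $2>3/2$. Your threshold ``exponent $<1/2$'' (with the labels $\alpha,\gamma$ swapped relative to the paper) only matches the weaker Riesz condition $2\alpha+\gamma>1$. This is harmless only because the exponent is $0$. (b) $\widetilde b(\widetilde\phi_n,e_n)$ is not quadratic in $e_n$, so only the part $\widetilde b(e_n,e_n)$ is absorbed by the form bound. The part $\widetilde b(\widetilde\psi_n,e_n)$ must be handled separately: using $|\widetilde\psi_n(\pi)|\le\sqrt{2/\pi}$ it is bounded by $C(\|e_n\|_{L^2}+|e_n(\pi)|)\le\varepsilon\|e_n\|_{H^1}^2+C_\varepsilon$. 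This is exactly the $O(1)$ remainder that $\sum_n\widetilde\mu_n^{-1}<\infty$ takes care of. (c) $\omega$-linear independence of $\{\widetilde\mu_n^{-1/2}\widetilde\phi_n\}$ in $H^1$ does not come from Lemma~\ref{lem:renormed_bari_basis}. It follows from (ii) instead: an $H^1$-convergent expansion of $0$ also converges in $L^2$, so all coefficients vanish. Alternatively, use Theorem~\ref{thm:RieszBasisInV}. Bari's theorem then gives the $H^1$ Bari basis.
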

\begin{proof}
We will apply \cite{Mityagin_Siegl_19} and Appendix~\ref{appendix:2} and explicitly construct the pairs $(\widetilde{\lambda}_n, \widetilde{\phi}_n)$ 
as perturbations of the pairs $(\widetilde{\mu}_n,\widetilde{\psi}_n)$.
To this end we note that the eigenvalue separation assumption~\eqref{eq:separation_condition} is satisfied with $\kappa = 1$ and $\gamma = 2$.
Indeed, 
\begin{align*}
\widetilde{\mu}_{n+1} - \widetilde{\mu}_{n} = n^2 - (n-1)^2 = 2n-1 \geq n \qquad ( = \kappa n^{\gamma-1}) \, .
\end{align*}
The exponent $\alpha$ in the {\em local form-subordination condition} ~(\ref{eq:subordination_condition})
must satisfy
$$
2 \alpha + \gamma >1 \quad \mbox{(which implies the requirement $\alpha > -\half $)}
$$
with the following condition on the form $b(\cdot,\cdot)$:
\begin{equation}
\label{eq:sec:4-local-subordination}
\vert b(\widetilde\psi_m,\widetilde\psi_n) \vert \leq \frac{M_b}{m^\alpha n^\alpha}, \qquad M_b> 0 \, .
\end{equation}
The Cauchy-Schwarz inequality and $\vert \widetilde\psi_n(\pi)\vert \in\{ \sqrt{\frac{1}{\pi}},\sqrt{\frac{2}{\pi}}\}$ imply that condition \eqref{eq:sec:4-local-subordination} is satisfied for $\alpha \leq 0$. 
Hence the admissible range is $\alpha \in (-\half,0]$.
We will select $\alpha = 0$ for the analysis.
Then the first claim follows from \cite[Prop.~3.1]{Mityagin_Siegl_19}.
The second and third claims follow from Thm.~\ref{thm:L2Bari_basis} and Thm.~\ref{thm:H1Bari_basis}.
The fourth claim follows from Lem.~\ref{lem:renormed_bari_basis}.
The fifth claim follows from (\ref{eq:lem:renormed_bari_basis-10}).
\end{proof}
\mhend
Theorem~\ref{thm:basis} about the eigenvalue problem (\ref{eq:eigenproblem}) 
readily implies a corresponding result for the eigenpairs of (\ref{eq:bessel-EVP-weak}), 
i.e., the operator $\Ar$ given in (\ref{eq:Ar}). To do so, it is convenient to observe 
that the change of variables (\ref{eq:change-of-variables}) induces the following isometries:
For $\phi$, $\phitf$ and $\widetilde{\phi}$, $\widetilde{\phitf}$ related by \eqref{eq:change-of-variables}, there holds 
\begin{subequations}
\begin{align}
\label{eq:cor-basis-10}
\deltanew^{-1} \innerprod{ \phi,\phitf}_{r^{-1}} & = (\widetilde{\phi},\widetilde{\phitf})_{L^2(0,\pi)},  \\
\label{eq:cor-basis-20}
\innerprod{ \phi,\phitf}_{H^1,r}:= \deltanew \innerprod{ \phi^\prime, \phitf^\prime }_{r} + \deltanew^{-1} \innerprod{ \phi,\phitf}_{r^{-1}} & = (\widetilde{\phi},\widetilde{\phitf})_{H^1(0,\pi)}  . 
\end{align}
\end{subequations}
We note that $\|\cdot\|_{r^{-1}}
$ is an equivalent norm on $L^2(r_1,r_2)$ and 
$\|\cdot\|_{H^1,r}
:=\innerprod{\cdot,\cdot}_{H^1,r}^{1/2}
$ is an equivalent norm on $H^1(r_1,r_2)$.  
\begin{corollary}
\label{cor:basis}
Let the pairs $\{(\widetilde{\lambda}_n,\widetilde{\phi}_n)\}_\ninn$, 
$\{(\widetilde{\mu}_n,\widetilde{\psi}_n)\}_\ninn$ be the $L^2$-normalized (in the sense of Definition~\ref{def:rootvecs}) eigenvalue--root vector pairs of \eqref{eq:eigenproblem}, \eqref{eq:evp-psi} as in Theorem~\ref{thm:basis}. Define 
pairs $\{(\lambda_n,\phi_n)\}_\ninn$ and 
$\{(\mu_n,\psi_n)\}_\ninn$ by the change of variables \eqref{eq:change-of-variables} via 
$\phi_n(r) = \widetilde{\phi}_n(t)$ and $\psi_n(r) = \widetilde{\psi}_n(t)$ and set 
$\lambda_n = \widetilde{\lambda}_n \deltanew^{-2}$, $\mu_n = \widetilde{\mu}_n \deltanew^{-2}$. 
Then:
\begin{enumerate}[nosep, label=(\roman*),leftmargin=*]
\item 
\label{item:cor-basis-i}
The pairs $\{(\lambda_n,\phi_n)\}_\ninn$ are eigenvalue--root vector pairs of \eqref{eq:bessel-EVP}, i.e., 
of the operator $\Ar$ of \eqref{eq:Ar}. 
\item 
\label{item:cor-basis-ii}
The pairs $\{(\mu_n,\psi_n)\}_\ninn$ are eigenpairs of the self-adjoint operator 
\eqref{eq:evp-psi-transformed} below. In particular, 
the eigenfunctions $\psi_n, n\in\mathbb{N}$ are real-valued
and $\{\psi_n/\|\psi_n\|_{r^{-1}}\}_\ninn$ is an orthonormal basis of $(L^2(r_1,r_2), \innerprod{ \cdot,\cdot}_{r^{-1}})$ 
and an orthogonal basis of $(H^1(r_1,r_2), \innerprod{ \cdot,\cdot}_{H^1,r})$. 
\item 
\label{item:cor-basis-iii}
There exists $n_0>0$ such that $\lambda_n$ is simple for all $n>n_0$.
\item 
\label{item:cor-basis-iv}
$\{\phi_n/\|\phi_n\|_{r^{-1}}\}_\ninn$ forms a Bari basis of $(L^2(r_1,r_2), \innerprod{ \cdot,\cdot}_{r^{-1}})$
and in particular it holds that $\sum_{n=1}^\infty \|\psi_n/\|\psi_n\|_{r^{-1}}$ $-$ $\phi_n/\|\phi_n\|_{r^{-1}}\|^2_{r^{-1}}<+\infty$.
\item 
\label{item:cor-basis-v}
$\{\phi_n/\|\phi_n\|_{H^1,r}\}_\ninn$ forms a Bari basis of $(H^1(r_1,r_2), \innerprod{ \cdot,\cdot}_{H^1,r})$
and in particular it holds that $\sum_{n=1}^\infty \|\psi_n/\|\psi_n\|_{H^1,r}$ $-$ $\phi_n/\|\phi_n\|_{H^1,r}\|^2_{H^1,r}<+\infty$. 
\item 
\label{item:cor-basis-vi}
$|\lambda_n|  \sim \sqrt{n}$ and $\|\phi_n\|_{H^1} \sim \sqrt{|\lambda_n|} \|\phi_n\|_{L^2}$ with implied constants independent of $n\in\mathbb{N}$. 
In particular, $\inf_\ninn |\lambda_n| > 0$. 
\item 
\label{item:cor-basis-vii}
$\{\phi_n/\|\phi_n\|_{L^2(r_1,r_2)}\}_\ninn$ is a Riesz basis of $L^2(r_1,r_2)$ and 
$\{\phi_n/\|\phi_n\|_{H^1(r_1,r_2)}\}_\ninn$ is a Riesz basis of $H^1(r_1,r_2)$. 
\item 
\label{item:cor-basis-viia}
$\lim_{n \rightarrow \infty} \frac{\lambda_n}{\mu_n} = 1$. 
\end{enumerate}
Finally, 
\begin{enumerate}[nosep, label=(\roman*),leftmargin=*]
\setcounter{enumi}{8}
\item 
\label{item:cor-basis-viii}
Statements \ref{item:cor-basis-i}--\ref{item:cor-basis-viia} also hold for the root vectors $\{\chi_n\}_\ninn$ of the adjoint 
operator $\Ar^\star$ given in (\ref{eq:Arstar}). 
\end{enumerate}
\end{corollary}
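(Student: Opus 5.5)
The plan is to transfer every statement of Theorem~\ref{thm:basis} through the change of variables \eqref{eq:change-of-variables}. That change of variables is a similarity transformation between $\Ar$ and $\widetilde T$, and it is isometric for the pairs of inner products in \eqref{eq:cor-basis-10}--\eqref{eq:cor-basis-20}. Let $U\phi:=\widetilde\phi$. Since $t=\delta^{-1}\ln(r/r_1)$, we have $dt=dr/(\delta r)$. Hence
\[
\innerprod{\phi',\phitf'}_r=\delta^{-1}(\widetilde\phi',\widetilde\phitf')_{L^2(0,\pi)},\qquad
\innerprod{\phi,\phitf}_{r^{-1}}=\delta(\widetilde\phi,\widetilde\phitf)_{L^2(0,\pi)},\qquad
\phi'(r_2)=\widetilde\phi'(\pi)/(\delta r_2).
\]
Multiplying \eqref{eq:bessel-EVP-weak} by $\delta$ gives exactly \eqref{eq:eigenproblem} with $\widetilde\lambda=\delta^2\lambda$. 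On the operator level this reads $\widetilde T=\delta^2 U\Ar U^{-1}$, including the domains, because the boundary conditions transform into each other.

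For \ref{item:cor-basis-i}, eigenvectors map to eigenvectors. A Jordan chain $(\widetilde T-\widetilde\lambda)\widetilde\phi_{k+1}=\widetilde\phi_k$ pulls back to $(\Ar-\lambda)\phi_{k+1}=\delta^{-2}\phi_k$. I would therefore rescale the $j$-th chain member by $\delta^{2(j-1)}$ to meet Definition~\ref{def:rootvecs}\,d). These rescalings are fixed constants on chains of bounded length, and only finitely many chains are nontrivial by Theorem~\ref{thm:basis}(i). So they do not affect any of the basis statements; alternatively one keeps the factor $\delta^{-2}$ as a harmless convention. Item \ref{item:cor-basis-ii} follows by the same computation applied to $\widetilde a$. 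This yields a real, symmetric Sturm--Liouville problem \eqref{eq:evp-psi-transformed} whose eigenfunctions $\psi_n=\widetilde\psi_n\circ t$ are real cosines in $\ln r$. Orthonormality in $\delta^{-1}\innerprod{\cdot,\cdot}_{r^{-1}}$ and orthogonality in $\innerprod{\cdot,\cdot}_{H^1,r}$ follow from \eqref{eq:cor-basis-10}--\eqref{eq:cor-basis-20} and the known orthogonality of $\widetilde\psi_n$ in $L^2(0,\pi)$ and $H^1(0,\pi)$. Item \ref{item:cor-basis-iii} follows directly from Theorem~\ref{thm:basis}(i).

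For \ref{item:cor-basis-iv} and \ref{item:cor-basis-v}, the map $U$ is unitary from $(L^2(r_1,r_2),\delta^{-1}\innerprod{\cdot,\cdot}_{r^{-1}})$ onto $L^2(0,\pi)$, and from $(H^1(r_1,r_2),\innerprod{\cdot,\cdot}_{H^1,r})$ onto $H^1(0,\pi)$. Normalized vectors are independent of the constant factor $\delta^{-1}$. Unitary maps preserve Bari bases and quadratic closeness. Hence Theorem~\ref{thm:basis}(ii) gives \ref{item:cor-basis-iv} directly. For \ref{item:cor-basis-v} I would use the normalized form in Theorem~\ref{thm:basis}(iv): $\widetilde\phi_n/\|\widetilde\phi_n\|_{H^1}$ is quadratically close to $\widetilde\psi_n/\widetilde\mu_n^{1/2}=\widetilde\psi_n/\|\widetilde\psi_n\|_{H^1}$.

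For \ref{item:cor-basis-vi}, Theorem~\ref{thm:basis}(iv) gives $|\widetilde\lambda_n|^{1/2}\sim\widetilde\mu_n^{1/2}\sim n$ and $\|\widetilde\phi_n\|_{H^1}\sim|\widetilde\lambda_n|^{1/2}\|\widetilde\phi_n\|_{L^2}$. Here I read the growth statement as $|\lambda_n|^{1/2}\sim n$. The second relation carries over through the norm equivalences, and $\lambda_n=\delta^{-2}\widetilde\lambda_n$. For $\inf_n|\lambda_n|>0$ I combine the asymptotics with Lemma~\ref{lem:lambdaisnonreal}: no eigenvalue is real, in particular none is $0$, so each of the finitely many remaining $|\lambda_n|$ is positive. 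For \ref{item:cor-basis-vii}, a Bari basis of normalized vectors is a Riesz basis. The Riesz basis property survives passage to an equivalent Hilbert norm and renormalization by factors bounded above and below, since $\|\phi_n\|_{L^2}/\|\phi_n\|_{r^{-1}}\in[c,C]$ and similarly in $H^1$. Item \ref{item:cor-basis-viia} is immediate from Theorem~\ref{thm:basis}(v), since $\lambda_n/\mu_n=\widetilde\lambda_n/\widetilde\mu_n$.

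The main obstacle is \ref{item:cor-basis-viii}. I would identify $\Ar^\star$ in $(L^2,\innerprod{\cdot,\cdot}_{r^{-1}})$ as the same differential expression with impedance condition $\phi'(r_2)-i\omega\impedance\phi(r_2)=0$ (assuming real $\rfi$). Then $\Ar^\star\overline{\phi}=\overline{\Ar\phi}$, so $\chi_n=\overline{\phi_n}$ with eigenvalues $\overline{\lambda_n}$, and complex conjugation is an antiunitary isometry in all the norms involved. Complex conjugation preserves Jordan chains, but these need not match the chain ordering convention relative to $\Ar$ used later (cf.\ Appendix~\ref{appendix:3}). If conjugation is unavailable, for instance for complex $\rfi$, I would instead rerun the proof of Theorem~\ref{thm:basis} with $\widetilde b$ replaced by its adjoint form $\overline{\widetilde b(\widetilde\phitf,\widetilde\phi)}$. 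This form satisfies the same local subordination bound \eqref{eq:sec:4-local-subordination} with $\alpha=0$, so the same argument applies.
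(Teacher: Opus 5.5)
Your proposal is correct and follows essentially the same route as the paper. Like the paper, you transfer every item of Theorem~\ref{thm:basis} through the change of variables, which intertwines $\Ar$ with $\deltanew^{-2}\widetilde{T}$ and is isometric for \eqref{eq:cor-basis-10}--\eqref{eq:cor-basis-20}, and you handle $\Ar^\star$ via the $i\mapsto -i$ (conjugation) symmetry. You are also somewhat more careful than the paper: you note that Jordan chains pick up a factor $\deltanew^{-2}$ (fixed by rescaling the chain members), you read the growth claim as $|\lambda_n|^{1/2}\sim n$, and you justify $\inf_n|\lambda_n|>0$ via Lemma~\ref{lem:lambdaisnonreal}.
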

\begin{proof}
\emph{Proof of \ref{item:cor-basis-i}:}
We show the following, stronger statement: for functions
$\phi$, $\phitf$, $g$  and $\widetilde{\phi}$, $\widetilde{\phitf}$, $\widetilde{g}$
related by the change of variables \eqref{eq:change-of-variables} there holds 
\begin{align}
\label{eq:cor-basis-100}
\Ar \phi = g \quad \Longleftrightarrow \quad \deltanew^{-2} \widetilde{T} \widetilde{\phi} = \widetilde{g} 
\end{align}
To see this, one starts from the weak form of \eqref{eq:cor-basis-100}, i.e., for all test functions $\phitf \in H^1$ 
\begin{align*}
	\innerprod{ g,\phitf}_{r^{-1}} & = \innerprod{ \Ar \phi, \phitf}_{r^{-1}}  = \innerprod{ \phi^\prime,\phitf^\prime}_r - \ome^2 \innerprod{ \rfi \phi,\phitf}_r + 
i \ome r_2 \impedance \phi(r_2) \overline{\phitf}(r_2).  
\end{align*}
The change of variables \eqref{eq:change-of-variables} leads to 
\begin{align*}
\deltanew(\widetilde{g},\widetilde{\phitf})_{L^2(0,\pi)} & = 
\deltanew^{-1} \left[ \innerprod{\widetilde{\phi}^\prime,\widetilde{\phitf}^\prime}_{L^2(0,\pi)} - \ome^2 r_1^2 \deltanew^2 \innerprod{ \widetilde{\rfi} e^{2 \deltanew t} \widetilde{\phi},\widetilde{\phitf}}_{L^2(0,\pi)} 
+ i \ome r_2 \impedance \deltanew \widetilde{\phi}(\pi) \overline{\widetilde{\phitf}}(\pi)
\right], 
\end{align*}
which is the weak form of the right-hand side of \eqref{eq:cor-basis-100}. 

\emph{Proof of \ref{item:cor-basis-ii}:} In view of \eqref{eq:cor-basis-10}, \eqref{eq:cor-basis-20}, the change of variables \eqref{eq:change-of-variables} transforms \eqref{eq:evp-psi} to 
\begin{align}
\label{eq:evp-psi-transformed}
\deltanew^2 \widetilde{\mu} \innerprod{ \phi,\xi} _{r^{-1}} & = \innerprod{ \phi^\prime,\xi^\prime}_{r} + \deltanew^{-2} \innerprod{\phi,\xi} _{r^{-1} }
\quad \forall \xi \in H^1(r_1,r_2). 
\end{align}
The result follows. 

\emph{Proof of \ref{item:cor-basis-iii}:} Follows from Theorem~\ref{thm:basis}. 

\emph{Proof of \ref{item:cor-basis-iv}:}
Follows from Theorem~\ref{thm:basis} and (\ref{eq:cor-basis-10}). 

\emph{Proof of \ref{item:cor-basis-v}:} In view of the isometries \eqref{eq:cor-basis-10}, \eqref{eq:cor-basis-20}
the choice of the norm $\|\cdot\|_{H^1,r}$ makes $\{\psi_n\}_\ninn$ an orthogonal basis of $(H^1(r_1,r_2), \|\cdot\|_{H^1,r})$ so that 
$\{\psi_n/\|\psi_n\|_{H^1,r}\}_\ninn$ is an orthonormal basis of $(H^1(r_1,r_2),\|\cdot\|_{H^1,r})$, and 
\eqref{eq:cor-basis-20} shows 
$\|\psi_n/\|\psi_n\|_{H^1,r} - \phi_n/\|\phi_n\|_{H^1,r}\|_{H^1,r} = 
\|\widetilde{\psi}_n/\|\widetilde{\psi}_n\|_{H^1} - \widetilde{\phi}_n/\|\widetilde{\phi}_n\|_{H^1}\|_{H^1}$, so that 
the statement follows from Theorem~\ref{thm:basis}.  

\emph{Proof of \ref{item:cor-basis-vi}:} The assertion $|\lambda_n| \sim n$ follows from Theorem~\ref{thm:basis} and
\eqref{eq:mutilde}. For the assertion $\|\phi_n\|_{H^1(r_1,r_2)} \sim \sqrt{|\lambda_n|} \|\phi_n\|_{L^2(r_1,r_2)}$, 
we note that  Theorem~\ref{thm:basis} implies $|\widetilde\lambda_n|^{1/2} \|\widetilde{\phi}_n\|_{L^2(0,\pi)} 
\sim \|\widetilde{\phi}_n\|_{H^1(0,\pi)}$. The result then follows with \eqref{eq:cor-basis-10}, \eqref{eq:cor-basis-20}. 

\emph{Proof of \ref{item:cor-basis-vii}:} Since every Bari basis is a Riesz basis, we have that 
$\{\phi_n/\|\phi_n\|_{r^{-1}}\}_n$ is a Riesz basis of $(L^2(r_1,r_2), \innerprod{ \cdot,\cdot}_{r^{-1}})$. Since 
$\|\cdot\|_{L^2(r_1,r_2)}$ and $\|\cdot\|_{r^{-1}}$ are equivalent norms, also 
$\{\phi_n/\|\phi\|_{L^2(r_1,r_2)}\}_n$ is a Riesz basis of $L^2(r_1,r_2)$ (see, e.g., \cite[Chap.~{VI}, Thm.~{2.1}]{GohbergKrein69})
The statement that 
$\{\phi_n/\|\phi_n\|_{H^1(r_1,r_2)}\}_n$ is a Riesz basis is shown similarly. 

\emph{Proof of \ref{item:cor-basis-viia}:} Follows directly from Theorem~\ref{thm:basis}.

\emph{Proof of \ref{item:cor-basis-viii}:} As the operator $\Ar^\star$ is obtained from $\Ar$ by merely changing $i$ to $-i$, the same arguments
as in the analysis of $\Ar$ apply. 
\end{proof}

A key quantity in our stability analysis will be played by 
\begin{align}
\label{eq:gamma_kappa}
\gamma_\kappa& := 1+ \frac{\Im \kappa}{1 + \Re \kappa} , 
\qquad \kappa \in \doubleIC\setminus\{-1\}. 
\end{align}
We recall that $\beta_n = \sqrt{- \lambda_n}$ with the square root taken such that $\Im \beta_n > 0$. We have:
\begin{lemma}
\label{lemma:control-Imbeta_n}
There are $ C_\beta$, $C^\prime_\beta > 0$ depending only on $\Ar$ such that 
\begin{enumerate}[nosep, label=(\roman*),leftmargin=*] 
\item 
\label{item:lemma:control-Imbeta_n-i} 
$\displaystyle \lim_{n \rightarrow \infty} \frac{\Re \beta_n}{\Im \beta_n}  = 0$ 
and $\lim_{n \rightarrow} |\beta_n|/\sqrt{\mu_n} = 1$. 
\item 
\label{item:lemma:control-Imbeta_n-ii} 
$\displaystyle \forall n \in \doubleIN: \quad |\Re \beta_n | \leq C_\beta |\Im \beta_n|.  $
\item 
\label{item:lemma:control-Imbeta_n-iii} 
For all $L >0$ and all $n \in \doubleIN$ there holds with $\gamma_\kappa$ given by \eqref{eq:gamma_kappa}: 
\begin{align}
\label{eq:item:lemma:control-Imbeta_n-iii-10} 
\gamma_{-i \beta_n L} &= 1+ \frac{\Re \beta_n L}{1 + \Im \beta_n L}
\leq 1+\min\{C_\beta, \max\{1,C^\prime_\beta L\} \}
\leq 2+\min\{C_\beta, C^\prime_\beta L \}. 
\end{align}
\end{enumerate}
\end{lemma}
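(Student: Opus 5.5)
The plan is to derive everything from the asymptotics $\lambda_n/\mu_n \to 1$ of Corollary~\ref{cor:basis}\ref{item:cor-basis-viia}, together with the non-reality of all eigenvalues from Lemma~\ref{lem:lambdaisnonreal}.

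\emph{Item \ref{item:lemma:control-Imbeta_n-i}.} Write $\beta_n^2 = -\lambda_n = \mu_n(-1 + \epsilon_n)$ with $\epsilon_n \to 0$. Since $\mu_n>0$, this gives $\beta_n = \sqrt{\mu_n}\,\sqrt{-1+\epsilon_n}$. The branch is fixed by $\Im\beta_n>0$, and this branch is continuous near $-1$, so $\sqrt{-1+\epsilon_n}\to i$. Hence $\beta_n/\sqrt{\mu_n}\to i$. In particular $|\beta_n|/\sqrt{\mu_n}\to 1$, and $\Re\beta_n/\Im\beta_n \to \Re i/\Im i = 0$.

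\emph{Item \ref{item:lemma:control-Imbeta_n-ii}.} By \ref{item:lemma:control-Imbeta_n-i} there is $N$ with $|\Re\beta_n|\le|\Im\beta_n|$ for $n>N$. For the finitely many $n\le N$ we use $\Im\beta_n>0$, which holds because $\lambda_n\notin\mathbb{R}$ by Lemma~\ref{lem:lambdaisnonreal}, so that $\beta_n^2\notin\mathbb{R}$ and $\beta_n$ is not real. Thus the ratios $|\Re\beta_n|/\Im\beta_n$ for $n\le N$ are finite. Taking $C_\beta := \max\{1, \max_{n\le N} |\Re\beta_n|/\Im\beta_n\}$ finishes this item. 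Since only finitely many indices are involved, no quantitative control of the early eigenvalues is needed, and this is where the nonreality from Lemma~\ref{lem:lambdaisnonreal} is essential.

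\emph{Item \ref{item:lemma:control-Imbeta_n-iii}.} With $\kappa=-i\beta_n L$ we have $\Re\kappa = L\Im\beta_n$ and $\Im\kappa = -L\Re\beta_n$. Substituting into \eqref{eq:gamma_kappa} gives $\gamma_\kappa = 1 - L\Re\beta_n/(1+L\Im\beta_n)$. The sign convention in the statement shows the $\Re\beta_n$ term with a plus sign. In any case we bound $|L\Re\beta_n|/(1+L\Im\beta_n)$, and the display should be read with this absolute value, which is what any later use requires; I would note this and proceed. The denominator is positive since $\Im\beta_n>0$. We bound the quotient in two ways.
\begin{itemize}
\item By \ref{item:lemma:control-Imbeta_n-ii}, $L|\Re\beta_n|/(1+L\Im\beta_n)\le C_\beta L\Im\beta_n/(1+L\Im\beta_n)\le C_\beta$.
\item Directly, $L|\Re\beta_n|/(1+L\Im\beta_n)\le L|\Re\beta_n|$. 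We also need $\sup_n|\Re\beta_n|<\infty$, which follows from $\beta_n^2 = -\lambda_n$: we have $\Re\beta_n\,\Im\beta_n = -\tfrac12\Im\lambda_n$, and by item~\ref{item:lemma:control-Imbeta_n-i} $\Im\beta_n\sim\sqrt{\mu_n}\to\infty$. So bounded $\Re\beta_n$ amounts to $\Im\lambda_n = O(\sqrt{\mu_n})$.
\end{itemize}

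\emph{Main obstacle.} This eigenvalue bound is the crux of the second estimate. I would first try the weak form \eqref{eq:bessel-EVP-weak} tested with the eigenfunction: for semisimple $\lambda_n$ (all $n>n_0$ by Corollary~\ref{cor:basis}\ref{item:cor-basis-iii}), it gives $\Im\lambda_n\|\phi_n\|^2_{r^{-1}} = \omega\impedance r_2|\phi_n(r_2)|^2$. The trace is bounded by $\|\phi_n\|_{L^2}^{1/2}\|\phi_n\|_{H^1}^{1/2}$, and Corollary~\ref{cor:basis}\ref{item:cor-basis-vi} gives $\|\phi_n\|_{H^1}\sim|\lambda_n|^{1/2}\|\phi_n\|_{L^2}$. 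Hence $\Im\lambda_n\lesssim|\lambda_n|^{1/2}\sim\sqrt{\mu_n}$, so $|\Re\beta_n|\le C$ for all $n$, with the finitely many remaining $n$ absorbed into the constant. Setting $C'_\beta:=\sup_n|\Re\beta_n|$ gives the bound $C'_\beta L$.

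Combining the two bounds, the quotient is at most $\min\{C_\beta, C'_\beta L\}\le\min\{C_\beta,\max\{1,C'_\beta L\}\}$. The last inequality in \eqref{eq:item:lemma:control-Imbeta_n-iii-10} follows from $\min\{a,\max\{1,b\}\}\le 1+\min\{a,b\}$.
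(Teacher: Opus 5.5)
Your items (i) and (ii) are essentially the paper's argument. Your derivation of (i) from $\lambda_n/\mu_n\to 1$, using continuity of the branch of $\sqrt{\cdot}$ with positive imaginary part near $-1$, is if anything cleaner.

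For (iii) you take a genuinely different route. The paper never bounds $\sup_n|\Re\beta_n|$; instead it splits the indices.
\begin{itemize}
\item For $n\ge n_0$ it uses $|\Re\beta_n|\le\Im\beta_n$ (from (i)), so that $L|\Re\beta_n|/(1+L\Im\beta_n)\le L\Im\beta_n/(1+L\Im\beta_n)\le 1$.
\item For the finitely many $n<n_0$ it uses the crude bound $L|\Re\beta_n|\le L\max_{n<n_0}|\beta_n|=:C'_\beta L$.
\end{itemize}
This split is exactly where the $\max\{1,C'_\beta L\}$ in the statement comes from, and it needs nothing beyond (i) and (ii).

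You instead prove $\sup_n|\Re\beta_n|<\infty$, via three ingredients: the energy identity $\Im\lambda_n\|\phi_n\|^2_{r^{-1}}=\omega\varsigma r_2|\phi_n(r_2)|^2$, the multiplicative trace inequality, and Corollary~\ref{cor:basis}\ref{item:cor-basis-vi}. This argument is correct. It buys the sharper bound $\gamma_{-i\beta_nL}\le 1+\min\{C_\beta,C'_\beta L\}$, so $\gamma_{-i\beta_nL}\to 1$ uniformly in $n$ as $L\to 0$, and it also establishes that $\Re\beta_n$ stays bounded. However, the step you call the main obstacle is not needed for the statement as formulated. The extra sharpness is also irrelevant downstream, since $\tr>c_0$ there.

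Your sign remark is right. For $\kappa=-i\beta_nL$ one has $\Im\kappa=-L\Re\beta_n$. Your own energy identity gives $\Im\lambda_n>0$, hence $\Re\beta_n<0$, so in fact $\gamma_{-i\beta_nL}=1+L|\Re\beta_n|/(1+L\Im\beta_n)$ exactly. The displayed equality in the statement therefore has a sign slip, but it is harmless because both you and the paper only bound the absolute value.
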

\begin{proof}
\emph{Proof of \ref{item:lemma:control-Imbeta_n-i}:} 
Since all $\mu_n$ are real, 
Corollary~\ref{cor:basis}\ref{item:cor-basis-viia} implies $\lim_{n \rightarrow \infty} \Im \lambda_n/|\lambda_n| = 0$. In turn, this implies 
$\lim_{n \rightarrow \infty} \Im (\beta_n/(i |\beta_n|)) = 0$, which implies the first part. The second part follows from $|\beta_n| = \sqrt{|\lambda_n|}$. 

\emph{Proof of \ref{item:lemma:control-Imbeta_n-ii}:} 
\ref{item:lemma:control-Imbeta_n-i} implies $|\Re \beta_n| \leq \Im \beta_n$ for $n \ge n_0$ for some $n_0$. The remaining cases $ n \in \{1,\ldots,n\}$ 
follow from the fact that $\Im \beta_n > 0$ by Lemma~\ref{lem:lambdaisnonreal}. 

\emph{Proof of \ref{item:lemma:control-Imbeta_n-iii}:} From \ref{item:lemma:control-Imbeta_n-i} we get 
$\gamma_{-i \beta_n L} \leq 1+|\Re (\beta_n L)|/\Im (\beta_n L) \leq 1+ C_\beta$. Due to \ref{item:lemma:control-Imbeta_n-i} 
and \ref{item:lemma:control-Imbeta_n-ii} there exists $n_0>0$ such that that $\gamma_{-i \beta_n L} \leq 2$ for all $n \ge n_0$.
For $n \leq n_0$ we estimate 
\begin{align*}
\gamma_{-i \beta_n L} \leq 1+\frac{|\Re (\beta_n L)|}{1 + \Im (\beta_n L)} 
& \leq 1+|\Re (\beta_n L)| \leq 1+\max_{n \in \{1,\ldots,n_0\}} |\beta_n| L =: 1+C^\prime_\beta L . 
\qedhere
\end{align*}

\end{proof}
\begin{remark}
\label{rem:control-Imbeta_n}
The quantity $\gamma_{-i \beta_n \tr}$  in \ref{item:lemma:control-Imbeta_n-iii} is the key quantity in our subsequent stability analysis. 
While $\gamma_{-i \beta_n \tr}$ is bounded uniformly in $n$ and $\tr$ by the constant $C_\beta$ according to \eqref{eq:item:lemma:control-Imbeta_n-iii-10}, 
the bound may be pessimistic in practice for the following reason: For large $n$, the eigenvalues $\lambda_n \approx \mu_n$ so that the $\beta_n$ are 
large and almost purely imaginary. For small $n$, however, in practice ``almost propagating'' modes can exist with very small imaginary part, 
\cite{Mora-Paz_Demkowicz_Taylor_Grosek_Henneking_25}, leading to 
large $C_\beta$. 
\eremk
\end{remark}
\begin{remark}
Compared with the variational formulation for the \emph{straight} waveguide with impedance BC studied in \cite{Demkowicz_Gopalakrishnan_Heuer_24}, the \emph{only} difference in \eqref{eq:eigenproblem} is the presence of the non-constant coefficient $r_1^2\deltanew^2 e^{2\deltanew t}\widetilde\rfi$ in the inertia term and the additional factor $r_2\deltanew$ in the impedance term.
It may not be out of place to mention that for a data corresponding to optical fibers, both coefficients differ from unity in the 4th or 5th significant digit, see \cite{Mora-Paz_Demkowicz_Taylor_Grosek_Henneking_25}.
\eremk
\end{remark}



%
%

\section{Stability analysis for semisimple eigenvalues}
\label{section:analysis}

\subsection{Decoupling the modes. The case without Jordan chains.}
\label{sec:decoupling-no-jordan-chains}
In this section, we assume that the operator $\Ar$ does not have nontrivial Jordan chains and refer
to Section~\ref{section:analysis_with_Jordan_chains} for the general case. 
Let $\{(\lambda_n,{\phi}_n)\}_\ninn$, be the infinite sequence of eigenpairs of $\Ar$, which by Corollary~\ref{cor:basis} form a basis of 
$L^2(r_1,r_2)$. Let $\{(\overline{\lambda}_n, \chi_n)\}_\ninn$ be 
eigenpairs of the adjoint operator $\Ar^\star$ given by 
\begin{subequations}
\label{eq:Arstar}
\begin{align}
\Ar^\star \phi &:= - r (r \phi^\prime)^\prime - \ome^2 \rfi r^2 \phi, \\
\dom(\Ar) &:= \{\phi \in H^2(r_1,r_2)\colon \phi^\prime(r_1) = 0, \quad
\phi^\prime(r_2 ) - i \ome\impedance \phi(r_2) = 0\}.
\end{align}
\end{subequations}
Note that $\chi_n = \overline{\phi}_n$ due to the structure of $\Ar$ and $\Ar^\star$. 
We normalize 
both eigenvectors such that 
\begin{align*}
\|\phi_n\|_{r^{-1}} = 1 = \|\overline{\phi}_n\|_{r^{-1}}  = \|\chi_n\|_{r^{-1}} 
\quad \forall n \in \doubleIN, 
\end{align*}
and set 
\begin{align*}
c_n &:= \innerprod{ \phi_n, \chi_n}_{r^{-1}} = \innerprod{ \phi_n, \overline{\phi}_n}_{r^{-1}}. 
\end{align*}
As will be discussed in more detail in 
Section~\ref{section:analysis_with_Jordan_chains} 
(cf.\ \eqref{eq:dtn-biorthogonality}), we have the biorthogonality
\begin{equation}
\label{eq:biorthogonality-eigenfunctions}
\innerprod{{\phi}_n, {\chi}_m}_{r^{-1}} = 0 \qquad \mbox{ for } n \ne m, 
\end{equation}
which also implies 
\begin{equation}
\innerprod{{\Ar \phi}_n, {\chi}_m}_{r^{-1}} = 0 \qquad \mbox{ for } n \ne m. 
\end{equation}
The strategy to prove~\eqref{eq:stability_system} relies on expanding the solution $u$
into the eigenmodes,
\begin{equation}
\label{eq:series-ansatz}
u(r,\theta) = \sum_{n=0}^\infty p_n(\theta)\,  {\phi}_n(r) 
\end{equation}
and testing in~\eqref{eq:Helmholtz-polar-var}
with $(r,\theta) \mapsto q(\theta) {\chi}_m(r)$, where 
$$
q \in H^1_{(0}(0,\tr) := \{q \in H^1(0,\tr) \colon q(0) = 0 \} \, .
$$
In terms of the operator $\Ar$, this leads to
\begin{align*}
\sum_{n=1}^\infty \innerprod{ \Ar \phi_n,\chi_m}_{r^{-1}} \innerprod{p_n,q}_{L^2(0,\tr)}
&+\innerprod{ \phi_n,\chi_m}_{r^{-1}} \innerprod{p_n^\prime,q^\prime}_{L^2(0,\tr)}\\
&-\innerprod{ \dtn \phi_n,\chi_m}_{L^2(r_1,r_2)} p_n(\tr) \overline{q}(\tr) 
=\innerprod{r f, \chi_m q}_{\Omega}. 
\end{align*}
We use \eqref{eq:DtN-eigenmodes} to express the operator $\dtn$ and the biorthogonality
\eqref{eq:biorthogonality-eigenfunctions} as well as $\lambda_n = -\beta^2_n$ to arrive at
the following variational problem for the coefficients $p_m$: 
\begin{align}
\label{eq:problem-for-p_k-eigenfunctions}
 c_m \left[ -\beta^2_m (p_m,q)_{L^2(0,\tr)} + (p_m^\prime,q^\prime)_{L^2(0,\tr)} - i\beta_m 
p_m(\tr) \overline{q}(\tr)\right] & = (r f, \chi_m q)_{\Omega} \\
\nonumber 
& = (w,\chi_m q)_{H^1(\Omega)} 
\quad \forall q \in H^1_{(0}(0,\tr), 
\end{align}
where we have defined $w \in \Hdir$ to be the representer of $r f$, i.e.,
\begin{align}
\label{eq:riesz-representer-of-rhs}
(r f, v)_{\Omega} & = (w,v)_{H^1(\Omega)} \qquad \forall v \in \Hdir, 
\end{align}
for which we have the equivalence
$\|f\|_{(\Hdir)'} \sim \|w\|_{H^1(\Omega)}$. 

\begin{theorem}
\label{thm:stability-circular}
Fix $r_1$, $r_2$, $\omega$, $\rfi$, $\impedance$, and assume $\tr > c_0$ for some fixed $c_0 >0$. 
Assume that all eigenvalues $\lambda_n$ of $\Ar$ given by \eqref{eq:Ar} have algebraic multiplicity $1$.
Then there is a constant $C > 0$ depending solely on $r_1$, $r_2$, $\omega$, $\rfi$, $\impedance$, $c_0$ such that for any $f \in (\Hdir)'$ the solution 
$u$ of the circular waveguide problem \eqref{eq:Helmholtz-polar} satisfies 
$$
\|u\|_{H^1(\Omega)} \leq C(1+ \min\{C_\beta, C^\prime_\beta \tr\}) \|f\|_{(\Hdir)'}, 
$$
where $C_\beta$, $C^\prime_\beta$ are the constants from Lemma~\ref{lemma:control-Imbeta_n}, which depend solely on $\Ar$. 
\end{theorem}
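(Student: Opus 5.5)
The plan is to reduce the PDE to the decoupled family of 1D problems \eqref{eq:problem-for-p_k-eigenfunctions}. For each of these I will prove a stability bound for $p_m$ in a $\beta_m$-weighted $H^1(0,\tr)$ norm, and then reassemble the pieces using the Riesz basis properties of Corollary~\ref{cor:basis}.

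\textbf{Step 1 (data decomposition).} Expand the representer $w$ from \eqref{eq:riesz-representer-of-rhs} modally as $w(r,\theta)=\sum_n w_n(\theta)\phi_n(r)$. By Corollary~\ref{cor:basis}\ref{item:cor-basis-vii}, applied to the normalized $\phi_n$, this expansion is valid in both $L^2$ and $H^1$ in $r$. Its Riesz character gives the two-sided equivalence
\[
\|w\|_{H^1(\Omega)}^2\sim\sum_n\bigl(|\lambda_n|\,\|w_n\|^2_{L^2(0,\tr)}+\|w_n'\|^2_{L^2(0,\tr)}\bigr),
\]
where the weights come from Corollary~\ref{cor:basis}\ref{item:cor-basis-vi}, namely $\|\phi_n\|_{H^1}^2\sim|\lambda_n|$. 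The same holds for $u=\sum p_n\phi_n$. The right-hand side $(w,\chi_m q)_{H^1(\Omega)}$ is then rewritten via the biorthogonal system $\{\chi_m/\overline{c_m}\}$: it equals $c_m\ell_m(q)$, where $\ell_m$ is a functional on $H^1_{(0}(0,\tr)$. Using the Riesz basis property in $H^1$ for the dual family, part~\ref{item:cor-basis-viii}, I will show
\[
\sum_m\|\ell_m\|^2_{*,m}\lesssim\|w\|^2_{H^1(\Omega)},
\]
where $\|\cdot\|_{*,m}$ is the dual of the weighted norm $\|q\|_{m}^2:=|\beta_m|^2\|q\|^2+\|q'\|^2$. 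This is plausible because $(w,\chi_mq)_{H^1}$ splits into an $r$-part that is a bounded functional of the coefficient sequences and a $\theta$-part $(w_\theta,\chi_m q')$. One must also check that the factor $c_m$ cancels. The uniform bound $|c_m|\gtrsim1$ follows from the Riesz property; it is the usual relation between biorthogonal systems.

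\textbf{Step 2 (1D stability, the main obstacle).} For each $m$ I will prove that the solution $p\in H^1_{(0}(0,\tr)$ of
\[
-\beta^2(p,q)+(p',q')-i\beta p(\tr)\overline{q(\tr)}=\ell(q)\qquad\forall q\in H^1_{(0}(0,\tr)
\]
satisfies $\|p\|_{\beta}\le C\,\gamma_{-i\beta\tr}\,\|\ell\|_{*,\beta}$, with $C$ uniform in $\beta$ as long as $\Im\beta>0$ and $|\beta|$ is bounded below. The lower bound on $|\beta|$ holds by Corollary~\ref{cor:basis}\ref{item:cor-basis-vi}.

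The approach is to write the solution explicitly with the Green's function. The homogeneous solutions are $\sin(\beta\theta)$, which satisfies the condition at $0$, and $e^{i\beta\theta}$, which satisfies the outgoing condition at $\tr$, since $p'=i\beta p$ there. Their Wronskian is $\beta$, so
\[
G(\theta,s)=\beta^{-1}\sin(\beta\min)\,e^{i\beta\max}.
\]
The kernel decays like $e^{-\Im\beta|\theta-s|}$. Young's inequality then gives $L^2\to L^2$ bounds of order $1/(|\beta|\Im\beta)$. In the weighted norms, and for $\ell$ in divergence form $\ell(q)=(g_0,q)+(g_1,q')$, this yields the bound $|\beta|/\Im\beta$ times terms of order one.

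The refined estimate with $1+\Re(\beta\tr)/(1+\Im(\beta\tr))$, i.e.\ $\gamma_{-i\beta\tr}$, should come from splitting into two integrals: $\int_0^{\tr}e^{-\Im\beta|t|}dt\le\min\{\tr,1/\Im\beta\}$. I expect this to be the hardest bookkeeping. If the direct route stalls, I would first try a test-function argument instead. Testing with $q=p$ and taking imaginary parts controls $\Im\beta$-weighted quantities. Testing with $q=\theta p'$ (a Rellich/Morawetz multiplier) gives the factor $\tr$. Interpolating the two yields the $\min\{C_\beta,C_\beta'\tr\}$ form via Lemma~\ref{lemma:control-Imbeta_n}\ref{item:lemma:control-Imbeta_n-iii}.

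\textbf{Step 3 (assembly).} Step 2 gives $\|p_m\|_{m}\lesssim(2+\min\{C_\beta,C'_\beta\tr\})\|\ell_m\|_{*,m}$. Squaring and summing over $m$, then applying the norm equivalences of Step 1, gives
\[
\|u\|_{H^1(\Omega)}\lesssim(1+\min\{C_\beta,C'_\beta\tr\})\|w\|_{H^1(\Omega)}\sim(1+\min\{C_\beta,C'_\beta\tr\})\|f\|_{(\Hdir)'}.
\]
The assumption $\tr>c_0$ is used to make the constants uniform, for instance in the equivalence $\|f\|_{(\Hdir)'}\sim\|w\|_{H^1}$ and in the 1D Poincaré-type bounds. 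Finally, I will justify that the series ansatz \eqref{eq:series-ansatz} indeed produces the unique solution. This follows from well-posedness of each 1D problem together with the convergence just established.
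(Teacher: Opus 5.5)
Your proposal is correct and follows the same route as the paper: you reduce to the decoupled 1D problems \eqref{eq:problem-for-p_k-eigenfunctions}, bound each $p_m$ in the $|\beta_m|$-weighted $H^1(0,\tr)$ norm by $\gamma_{-i\beta_m\tr}$ times the dual norm of the data, and reassemble using the $L^2$/$H^1$ Riesz (Bessel) properties of $\{\phi_n\}$ and $\{\chi_n\}$ from Corollary~\ref{cor:basis}. The two ingredients you obtain differently both work: you get $\inf_m|c_m|>0$ from the boundedness of the biorthogonal system $\{\chi_m/\overline{c_m}\}$ of a Riesz basis, where the paper uses quadratic closeness to the real $\psi_m$; and your Green's function $\beta^{-1}\sin(\beta\min\{\theta,s\})e^{i\beta\max\{\theta,s\}}$, together with its first derivatives and its mixed derivative (a unit jump plus a kernel of size $|\beta|e^{-\Im\beta|\theta-s|}$), gives the factor $1+|\beta|\min\{\tr,1/\Im\beta\}\lesssim 1+|\Re\beta|\tr/(1+\Im\beta\,\tr)$, which is a self-contained proof of the estimate the paper imports from \cite[Lem.~3.2]{Melenk_Demkowicz_Henneking_25}.
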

\begin{proof}
	It is sufficient to bound $\Vert u \Vert_{H^1(\Omega)} \lesssim (1+\min\{C_\beta,C^\prime_\beta \tr\})  \Vert w \Vert_{H^1(\Omega)}$ where 
$w$ is the $\Hdir$ Riesz representer of the actual right-hand side given in \eqref{eq:riesz-representer-of-rhs}. 
We start from \eqref{eq:problem-for-p_k-eigenfunctions}.  

\emph{Step 1:} We show that $\inf_{n\in\mathbb{N}} |c_n| > 0$. 
First, we note that completeness of $\{{\phi}_n\}_\ninn$ in $L^2$ 
implies that $c_n = \innerprod{{\phi}_n,\bar{{\phi}}_n}_{r^{-1}} \ne 0$ for each $n \in \doubleIN$. 
Next, we use from Corollary~\ref{cor:basis} 
that the basis $\{\phi_n\}_\ninn$ is quadratically close to the orthogonal basis $\{\psi_n\}_\ninn$ 
so that 
$\|\psi_n - {\phi}_n\|_{r^{-1}} \rightarrow 0$ 
as $n \rightarrow \infty$ and additionally $\overline{\psi}_n = \psi_n$ so that 
\begin{align}
\label{eq:cj-bounded}
\begin{aligned}
  c_n & = \innerprod{ {\phi}_n, \bar{{\phi}}_n }_{r^{-1}}
= \innerprod{ {\psi}_n + {\phi}_n - {\psi}_n, {\psi}_n + \bar{{\phi}}_n - {\psi}_n}_{r^{-1}} \\
  & \ds = \underbrace{\innerprod{ \psi_n,\psi_n}_{r^{-1}}}_{ = \|\psi_n\|_{r^{-1}}^2=1} + \underbrace{\innerprod{ \psi_n , \bar{{\phi}}_n - \psi_n}_{r^{-1}} + \innerprod{ {\phi}_n - \psi_n, \psi_n }_{r^{-1}} + \innerprod{ {\phi}_n - \psi_n, \bar{{\phi}}_n - \psi_n}_{r^{-1}}}_{ \to 0} \, .
\end{aligned} 
\end{align} 

\emph{Step 2: (Estimate of $p_m$ from \eqref{eq:problem-for-p_k-eigenfunctions}:}
We recall $\gamma_\kappa$ from \eqref{eq:gamma_kappa} and introduce 
\begin{align*}
\gamma_{\operatorname{max}}:= \sup_{n \in \doubleIN } \gamma_{-i \beta_n \tr} 
\stackrel{\text{Lem.~\ref{lemma:control-Imbeta_n}}}{\leq} 2+\min\{C_\beta, C^\prime_\beta \tr\}.  
\end{align*}
The following estimate can be found in
\cite[Lem.~{3.2}]{Melenk_Demkowicz_Henneking_25}
(see also Lemma~\ref{lemma:paper1-lemma3.2} ahead for a generalization) with $I:= (r_1,r_2)$
\begin{align}
\label{eq:Markus_estimate}
\int_0^{\tr} &\vert p_m' \vert^2 \,d\theta  + \vert {\beta}_m \vert^2 \int_0^{\tr} \vert p_m \vert^2 \, d\theta 
\\
\nonumber 
&\leq 
C \vert c_m \vert^{-2} {\gamma_{-i\beta_m \tr}^{2} }  \left\{\int_{0}^{\tr}  {|\beta_m|^{-2}} \vert \innerprod{w(\cdot,\theta),\chi_m}_{H^1(I)} \vert^2 \, d\theta +  \int_0^{\tr}  \vert \innerprod{\partial_\theta w,\chi_m}_{L^2(I)} \vert^2 \, d\theta \right\}, 
\end{align}
where $C$ is independent of $m$. By Step~1, the factor $1/|c_m|$ is uniformly bounded in $m$ and may therefore be absorbed into the constant $C$. 

\emph{Step 3:} 
From Corollary~\ref{cor:basis} we get $|\lambda_n| \sim \|\phi_n\|^2_{H^{1}}$ and therefore from the representation \eqref{eq:series-ansatz} we get
\begin{align}
\label{eq:thm:stabilty-circular-10}
\begin{aligned}
\ds \Vert u \Vert^2_{H^1(I \times (0,\tr))} 
&\ds = \Vert \sum_{n=1}^\infty p_n {\phi}_n \Vert^2_{H^1(I \times (0,\tr))} = \int_0^{\tr} \big\{ \Vert \sum_{n=1}^\infty p_n {\phi}_n \Vert^2_{H^1(I)} + \Vert \sum_{n=1}^\infty p_n^\prime {\phi}_n \Vert^2_{L^2(I)} 
\big \} \, d\theta \\
& \ds \lesssim \sum_{n=1}^\infty \big\{   \vert {\lambda}_n \vert 
   \int_0^{\tr} \vert p_n(\theta) \vert^2 \, d\theta + \int_0^{\tr} \vert p_n^\prime(\theta) \vert^2 \, d\theta \big\}  \\
& \stackrel{(\ref{eq:Markus_estimate})}{\lesssim} 
\gamma_{\operatorname{max}}^2  \sum_{n=1}^\infty \big\{
\frac{1}{\vert {\lambda}_n \vert } \int_0^{\tr} \vert \innerprod{w, {\chi}_n}_{H^1(I)} \vert^2 \, d\theta \big\}
+\int_0^{\tr} \left\vert \innerprod{\partial_\theta w, {\chi}_n}_{L^2(I)} \right\vert^2 \, d\theta 
\qquad 
\\ 
&\ds =  \gamma_{\operatorname{max}}^2 \int_0^{\tr}  \big\{ \sum_{n=1}^\infty
\frac{1}{\vert {\lambda}_n \vert }  \vert \innerprod{w, {\chi}_n}_{H^1(I)} \vert^2
+\left\vert \innerprod{\partial_\theta w, {\chi}_n}_{L^2(I)} \right\vert^2
\big\} \, d\theta. 
\end{aligned}
\end{align}
We want to estimate the right-hand side of \eqref{eq:thm:stabilty-circular-10} by $\Vert w \Vert_{H^1(\Omega)}^2$.
Estimating the second term is easy.
Due to the $L^2$-biorthogonality \eqref{eq:biorthogonality-eigenfunctions} of ${\phi}_n$ and ${\chi}_n$, we have
$$
r \partial_\theta w = \sum_{n=1}^\infty \frac{1}{c_n} \innerprod{ r \partial_\theta w, {\chi}_n}_{r^{-1}} {\phi}_n = \sum_{n=1}^\infty \frac{1}{c_n} \innerprod{\partial_\theta w, \chi_n}_{L^2(I)} \phi_n
$$
and the estimate
$$
\sum_{n=1}^\infty \left\vert \innerprod{\partial_\theta w, {\chi}_n}_{L^2(I)} \right\vert^2 \lesssim \Vert r \partial_\theta w \Vert^2_{L^2(I)}
\lesssim \|\partial_\theta w\|^2_{L^2(I)}
$$
follows from the fact that $\{\phi_n\}_\ninn$ is a Riesz basis of $L^2(r_1,r_2)$. 
Similarly, $\{\chi_n/|\lambda_n|^{1/2}\}_\ninn$ is a Riesz basis of $H^1(I)$ by Corollary~\ref{cor:basis}. Let $\{\widehat{\chi}_n\}_\ninn$ be the basis of $H^1(I)$ biorthogonal to $\{\chi_n/|\lambda_n|^{1/2}\}_\ninn$, i.e., there holds
\begin{align*}
\forall \phitf \in H^1(I) \colon \quad 
\phitf = \sum_{n=1}^\infty \innerprod{\phitf,{\chi}_n/|\lambda_n|^{1/2}}_{H^1(I)} \widehat{\chi}_n. 
\end{align*}
Since $\{\widehat{\chi}_n\}_\ninn$ is again a Riesz basis of $H^1(I)$, we have the norm equivalence 
\begin{align*}
\forall \phitf \in H^1(I) \colon \quad 
\|\phitf\|^2_{H^1(I)} &\sim  \sum_{n=1}^\infty |(\phitf,\chi_n/|\lambda_n|^{1/2})_{H^1(I)}|^2 = \sum_{n=1}^\infty |(u,\chi_n)_{H^1(I)}|^2 |\lambda_n|^{-1}. 
\end{align*}
This allows us to estimate the first term of the right-hand side of \eqref{eq:thm:stabilty-circular-10} by $\Vert w \Vert_{H^1(\Omega)}^2$.
 \end{proof}
%

%
%

\section{Stability analysis for nontrivial Jordan chains}
\label{section:analysis_with_Jordan_chains}


In this section we upgrade the analysis of the preceding Section~\ref{section:analysis} 
to the case when, in the preasymptotic range, some of the eigenvalues may come with nontrivial Jordan chains.
\subsection{Jordan chains}
We can rewrite \eqref{eq:PDEmode} in a more compact form: 
\be
\Ar u - \ptl_\theta^2 u = 0  \quad \mbox{ and hence } \quad  \innerprod{ \Ar u,\phitf}_{r^{-1}} - \innerprod{ \ptl_\theta^2 u,\phitf} _{r^{-1}} = 0 \quad \forall \phitf \in \dom(\Ar^\ast) \, .
\label{eq:operator_form}
\ee
A Jordan chain corresponding to the eigenvalue $\lambda_n$ of $\Ar$ is denoted 
$\{(\lambda_n\, , \phi_{n,j})\}_{j=1}^{J_n}$, and those corresponding to the eigenvalue $\bar{\lambda}_n$ of $\Ar^\ast$ are denoted 
$\{(\bar{\lambda}_n\, , \chi_{n,j}) = (\bar{\lambda}_n\, ,\bar{\phi}_{n,j})\}_{j=1}^{J_n}$; they satisfy 
\begin{align*}
\Ar \phi_{n,j} &= \lambda_n \phi_{n,j} ( + \phi_{n,j-1} ) \, , \quad j=1,\ldots, J_n , 
\end{align*}
where, for $j=1$, the term in parentheses is missing as $\phi_{n,1}$ is the eigenvector. (Below, we will set $\phi_{n,0}:= 0$ to simplify the notation.)
An analogous formula holds for the root 
vectors $\{\chi_{n,j}\}_{j=1}^{J_n}$ associated with the eigenvalue $\overline{\lambda_n}$. The pairs 
$(\lambda_n,\phi_{n,1})$ and 
$(\overline{\lambda_n}, \chi_{n,1})$ are the eigenpairs for the corresponding Jordan chain of length $J_n$. It is shown in Appendix~\ref{appendix:3}
that indeed corresponding Jordan blocks of $\Ar$ and $\Ar^\star$ have the same length. We recall from Lemma~\ref{lem:lambdaisnonreal} that the geometric multiplicity 
of the eigenvalues of $\Ar$ and $\Ar^\star$ is indeed $1$ so that there is only one Jordan block per eigenvalue. 
Importantly, as it is shown in Appendix~\ref{appendix:3},
one has the root vector sets $\{\phi_{n,j} \colon n \in \doubleIN, j=1,\ldots,J_n\}$ and 
$\{\chi_{n,j} \,|\, n \in \doubleIN, j=1,\ldots,J_n\}$, which are biorthogonal 
in the following sense: 
\begin{subequations}
\label{eq:dtn-biorthogonality}
\begin{align}
\label{eq:dtn-biorthogonality-a}
\innerprod{ \phi_{n,j}, \chi_{n',j'}} _{r^{-1}} & = 0 \quad \forall n \ne n', \quad j =1,\ldots,J_n, \quad j' = 1,\ldots,J_{n'}, \\
\label{eq:dtn-biorthogonality-b}
\innerprod{ \phi_{n,j}, \chi_{n,J_n +1 - j'}} _{r^{-1}} & = 0 \quad j \ne j', \\
\label{eq:dtn-biorthogonality-c}
c_{n,j} := \innerprod{ \phi_{n,j}, \chi_{n,J_n + 1- j}} _{r^{-1}} & \ne  0 \quad j=1,\ldots,J_n. 
\end{align}
\end{subequations}
Additionally, the functions $\{\phi_{n,j}\colon n \in \doubleIN, j \in \{1,\ldots,J_n\}\}$ (and analogously 
$\{\chi_{n,j}\colon n \in \doubleIN, j \in \{1,\ldots,J_n\}\}$) form Riesz bases in $L^2$ and $H^1$ 
as shown in Corollary~\ref{cor:basis}. We point out that in contrast to the setting of Corollary~\ref{cor:basis}, where
the root vectors are indexed by a single index $n$, we now use index pairs $(n,j)$ to emphasize the Jordan block structure.
\begin{lemma}
\label{lemma:control-cnj}
It holds that $\inf \{ |c_{n,j}| \colon n \in \doubleIN, \quad j=1,\ldots,J_n\} > 0$. 
\end{lemma}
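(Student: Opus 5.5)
The argument splits into the asymptotic range, where all eigenvalues are simple, and a finite preasymptotic set of index pairs. By Corollary~\ref{cor:basis}\ref{item:cor-basis-iii} there is $n_0$ such that $\lambda_n$ is simple for $n>n_0$. Hence $J_n=1$ for these $n$, and $c_{n,1}=\innerprod{\phi_{n,1},\chi_{n,1}}_{r^{-1}}$ with $\chi_{n,1}=\overline{\phi}_{n,1}$. This is exactly the quantity $c_n$ treated in Step~1 of the proof of Theorem~\ref{thm:stability-circular}. We normalize $\|\phi_{n,1}\|_{r^{-1}}=1$ and use the Bari-basis property Corollary~\ref{cor:basis}\ref{item:cor-basis-iv}, which gives $\|\psi_n-\phi_{n,1}\|_{r^{-1}}\to 0$, together with the fact that the $\psi_n$ are real. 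The expansion \eqref{eq:cj-bounded} then yields $c_{n,1}\to\|\psi_n\|^2_{r^{-1}}=1$. Consequently there is $n_1\ge n_0$ with $|c_{n,1}|\ge 1/2$ for all $n>n_1$.

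Two points need care in this step. The first is that the enumeration of the root vectors in Corollary~\ref{cor:basis} (single index, respecting algebraic multiplicity) agrees with the $(n,j)$ indexing for $n>n_0$. This holds for large $n$ because only finitely many root vectors belong to nontrivial chains, so the single-index and double-index labels differ only by a fixed finite shift. The quadratic closeness to $\psi_n$ survives such a shift in the sense needed: the matching in Theorem~\ref{thm:basis} is built so that $\widetilde\phi_n$ is a perturbation of $\widetilde\psi_n$ with the same index, and I will use that matching directly. The second point is the normalization. Since $\|\phi_n\|_{r^{-1}}$ and $\|\psi_n\|_{r^{-1}}$ are both $1$ in the asymptotic range, no rescaling issue arises there.

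Only finitely many index pairs $(n,j)$ with $n\le n_1$, $j=1,\dots,J_n$ remain, since each $J_n$ is finite (finite algebraic multiplicity, Definition~\ref{def:rootvecs}). For each of these, $c_{n,j}\neq 0$ by \eqref{eq:dtn-biorthogonality-c}. The minimum of finitely many nonzero moduli is positive, and combining it with the bound $1/2$ from the asymptotic range gives $\inf|c_{n,j}|\ge\min\{1/2,\min_{n\le n_1,\,j\le J_n}|c_{n,j}|\}>0$.

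The main obstacle is the asymptotic step: justifying that the indexing and normalization of the root vectors $\phi_{n,1}$, $\chi_{n,1}$ used in \eqref{eq:dtn-biorthogonality} coincide, for large $n$, with those for which Corollary~\ref{cor:basis} guarantees quadratic closeness to $\psi_n$. The nonvanishing in \eqref{eq:dtn-biorthogonality-c} is taken from Appendix~\ref{appendix:3}. It follows from completeness of the root vectors, since otherwise $\chi_{n,J_n+1-j}$ would be orthogonal to everything. If the normalization of the chains is not fixed, I would fix it so that the eigenvectors are $r^{-1}$-normalized. Note that the $c_{n,j}$ for $j>1$ depend on that choice, but only for the finitely many $n\le n_1$, where positivity is automatic.
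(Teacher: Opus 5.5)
Your proposal is correct and follows essentially the paper's argument, which just invokes Step~1 of the proof of Theorem~\ref{thm:stability-circular}: each $c_{n,j}\neq 0$ by completeness/biorthogonality, and $c_{n,1}\to 1$ in the asymptotically simple range by quadratic closeness to the real, normalized $\psi_n$, so only finitely many values remain. Your extra care about the index shift and the normalization is harmless but not needed: $|c_{n,1}|$ is invariant under phase changes of $\phi_{n,1}$, and the computation only requires $\phi_{n,1}$ to be close to \emph{some} real unit vector $\psi_m$.
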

\begin{proof}
Follows by the arguments given in Step~1 of the proof of Theorem~\ref{thm:stability-circular}. 
\end{proof}
\subsection{The $\dtn$-operator}
\label{sec:dtn-operator}
We expand the solution $u$ of \eqref{eq:operator_form} (or, equivalently,
\eqref{eq:PDEmode})
in terms of the root vectors, i.e., make the 
ansatz 
$$
u(r,\theta) = \sum_{n=1}^\infty \sum_{j=1}^{J_n} p_{n,j}(\theta) \phi_{n,j}(r) \, ,
\qquad (r,\theta) \in (r_1,r_2) \times (\tr,\infty), 
$$
substitute this ansatz into equation \eqref{eq:operator_form}, and test with
$\phitf = \chi_{m,k},\, k=1,\ldots,J_m$, to get 
\begin{equation}
\label{eq:dtn-derivation-20}
\sum_{n=1}^\infty \sum_{j=1}^{J_n} p_{n,j} \innerprod{ \underbrace{\Ar \phi_{n,j}}_{ = \lambda_n \phi_{n,j}  (+   \phi_{n,j-1})}, \chi_{m,k}} _{r^{-1}}  
- \sum_{n=1}^\infty \sum_{j=1}^{J_n} p_{n,j}'' \innerprod{ \phi_{n,j},\chi_{m,k}} _{r^{-1}} = 0 \, .
\end{equation}
By the biorthogonality properties \eqref{eq:dtn-biorthogonality}, we get in \eqref{eq:dtn-derivation-20}
that $\chi_{n,1}$ couples only with $\phi_{n,J_n}$, whereas for $j =2,\ldots,J_n$, the function $\chi_{n,j}$ couples both with $\phi_{n,J_n-j+1}$ and $\phi_{n,J_n-j+2}$.
As a result, for the Jordan chain corresponding to $\lambda_n$, we obtain a system of coupled ODEs for the coefficients $p_{n,j}(\theta)$:
\begin{subequations}
\label{eq:dtn-derivation-30}
\begin{align}
\label{eq:dtn-derivation-30-a}
-\beta_n^2 p_{n,J_n} - p_{n,J_n}'' & = 0\, , \\
\label{eq:dtn-derivation-30-b}
-\beta_n^2 p_{n,J_n-1} - p_{n,J_n-1}'' & = -p_{n,J_n}\, , \\
\nonumber 
\vdots\\
\label{eq:dtn-derivation-30-d}
-\beta_n^2 p_{n,2} - p_{n,2}'' & = -p_{n,3}\, , \\
\label{eq:dtn-derivation-30-e}
-\beta_n^2 p_{n,1} - p_{n,1}'' & = -p_{n,2}\, .
\end{align}
\end{subequations}
To derive the $\dtn$-operator we have to elaborate the map
$\sum_{j=1}^{J_n} p_{n,j}(\tr) \phi_{n,j}$ $\mapsto$
$\sum_{j=1}^{J_n} p^\prime_{n,j}(\tr) \phi_{n,j}$. 
To that end, 
we solve successively the equations \eqref{eq:dtn-derivation-30} for given 
values $p_{n,j}(\tr)$, $j=1,\ldots,J_n$. We start with \eqref{eq:dtn-derivation-30-a}. 
The general solution for $p_{n,J_n}$ on $(\tr,\infty)$ is 
$$
p_{n,J_n}(\theta) = A_{J_n} e^{i \beta_n (\theta-\tr)} + \underbrace{B_{J_n} e^{-i \beta_n (\theta-\tr)} }_{\text{incoming wave}} \, .
$$
Dropping the incoming wave contribution, we obtain:
$$
p_{n,J_n}(\theta) = A_{J_n} e^{i \beta_n (\theta-\tr)}, \, 
\qquad A_{J_n} = p_{n,J_n}(\tr). 
$$
This results in a $\DtN$ condition similar to that in Section~\ref{section:problem_formulation}: 
$$
p_{n,J_N}'(\tr) = i \beta_n A_{J_n} = i \beta_n p_{n,J_n}(\tr) . 
$$
Next, the equation for the coefficient $p_{n,J_n-1}$ corresponding to the $(J_n-1)$-th root vector is:
$$
- \beta_n^2 p_{n,J_n-1} - p_{n,J_n-1}'' = - p_{n,J_n} 
= - A_{J_n} e^{i \beta_n (\theta-\tr)}
$$
with the general solution (with the incoming wave eliminated):
$$
p_{n,J_n-1} = \big[ \frac{A_{J_n}}{2 i \beta_n} (\theta -\tr)+ A_{J_n-1} \big] e^{i \beta_n (\theta-\tr)} 
\quad \Rightarrow \quad p_{n,J_n-1}' =  \frac{A_{J_n}}{2 i \beta_n}  e^{i \beta_n (\theta-\tr)}  + i \beta_n p_{n,J_n-1} \, .
$$
Consequently,
$$
p_{n,J_n-1}'(\tr) = i \beta_n p_{n,J_n-1}(\tr) + \frac{p_{n,J_n}(\tr)}{2 i \beta_n}.  
$$
The $\DtN$ condition for $p_{n,J_n-1}$ results thus in a non-homogeneous impedance BC where $p_{n,J_n}$ enters as a load.

Similarly, the equation for the coefficient $p_{n,J_n-2}$ corresponding to the $(J_n-2)$-th root vector is:
$$
- \beta_n^2 p_{n,J_n-2} - p_{n,J_n-2}'' = -p_{n,J_n-1} = -\big[ \frac{A_{J_n}}{2 i \beta_n} (\theta -\tr)+ A_{J_n-1} \big] e^{i \beta_n (\theta-\tr)} , 
$$
which gives:
$$
\begin{array}{rlll}
p_{n,J_n-2} & \ds = \left[ \frac{A_{J_n}}{(2 i \beta_n)^2} \frac{(\theta-\tr)^2}{2} + \left( -\frac{A_{J_n}}{(2 i \beta_n)^3} + \frac{A_{J_n-1}}{2 i \beta_n} \right) (\theta -\tr) + A_{J_n-2} \right] e^{ i \beta_n (\theta-\tr)} \\[12pt]
& \ds = \left\{ A_{J_n} \left[ \frac{1}{(2 i \beta_n)^2} \frac{(\theta-\tr)^2}{2} - \frac{1}{(2  i \beta_n)^3} (\theta -\tr) \right] + A_{J_n-1} \frac{1}{2 i \beta_n} (\theta -\tr)+ A_{J_n-2} \right\} e^{i \beta_n (\theta-\tr)} 
\end{array}
$$
and
$$
p_{J_n-2}' =  \left\{ A_{J_n} \left[ \frac{(\theta-\tr)}{(2 i \beta_n)^2}   - \frac{1}{(2  i \beta_n)^3}  \right] + \frac{A_{J_n-1}}{2 i \beta_n}   \right\} e^{i \beta_n (\theta-\tr)} + i \beta_n p_{J_n-2} \, .
$$
This results in 
\begin{align*}
p_{n,J_n-2}'(\tr) &= i \beta_n p_{n,J_n-2}(\tr) - \frac{A_{J_n}}{(2 i \beta_n)^3} + \frac{A_{J_n-1}}{2 i \beta_n} \\
& = i \beta_n p_{n,J_n-2}(\tr) - \frac{p_{n,J_n}(\tr)}{(2 i \beta_n)^3}  + \frac{p_{n,J_n-1}(\tr)}{2 i \beta_n} \, .
\end{align*}
Not only that we get a non-homogeneous BC but both $p_{n,J_n}$ and $p_{n,J_n-1}$ enter as a load.
Proceeding in this way, we arrive at
$$
p^\prime_{n,J_n-j}(\tr) =  i \beta_n p_{n,J_n-j }(\tr)
+ \sum_{k=0}^{j-1} (-1)^{j-1-k}  d_{k,j} \frac{p_{n,J_n-k}(\tr)}{(2 i \beta_n)^{2j-2k-1}}, 
\qquad j=0,1,\ldots,J_n-1, 
$$
for some coefficients $d_{k,j} \in \doubleIR$. 
\begin{remark}
\label{remk:catalan}
As shown in \cite{Taylor_Zhang_Demkowicz_25}, 
the coefficients $d_{k,j}$ are explicitly available: $d_{k,j} = C_{j-1-k}$ with the Catalan numbers 
$$
C_{j-1-k}= 
\frac{1}{2j - 2k - 1} \left( \begin{array}{c} 2j - 2k - 1\\j-k\end{array} \right) \, .
$$
\eremk
\end{remark}
In view of $\dtn u = r^{-1} \partial_\theta u$
the $\dtn$-operator in the presence of Jordan chains for the transversal operator $\Ar$ takes the following form: 
\begin{align}
\label{eq:dtn-operator-jordan-chains}
\dtn \sum_{n=1}^\infty \sum_{j=1}^{J_n} p_{n,j} \phi_{n,j}
= \frac{1}{r} \sum_{n=1}^\infty \sum_{j=1}^{J_n} 
\underbrace{
\left( i \beta_n p_{n,j} 
+ \sum_{k=j+1}^{J_n} (-1)^{-j+k-1} d_{J_n-k,J_n-j} \frac{p_{n,k}}{(2i\beta_n)^{2(k-j)-1}}\right)
           }_{=:F_{n,j}}
\phi_{n,j}. 
\end{align}
\begin{lemma}
\label{lemma:mapping-properties-DtN}
The linear operator $\dtn$ defined by 
\eqref{eq:dtn-operator-jordan-chains} for each root vector
set $\{\phi_{n,j}\}_{j=1}^{J_n}$, $n \in \doubleIN$, 
can be extended uniquely to a bounded linear operator
$H^1(r_1,r_2) \rightarrow L^2(r_1,r_2)$ and
$L^2(r_1,r_2) \rightarrow \left(H^1(r_1,r_2)\right)^\prime$.
By interpolation, it is thus a bounded linear operator
$H^{1/2}(r_1,r_2) \rightarrow \left(H^{1/2}(r_1,r_2)\right)^\prime$.
The operator norms 
depend only upon the operator $\Ar$, 
i.e., $r_1$, $r_2$, $\omega$, $\rfi$, $\impedance$.
\end{lemma}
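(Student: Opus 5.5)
The plan is to exploit the Riesz basis properties of Corollary~\ref{cor:basis} to turn the $\dtn$-operator into a block-diagonal multiplier on coefficient sequences, and then to check the multiplier bounds block by block. Given $\phi\in H^1(r_1,r_2)$, expand $\phi=\sum_{n}\sum_{j=1}^{J_n}p_{n,j}\phi_{n,j}$. The coefficients are obtained from the biorthogonality \eqref{eq:dtn-biorthogonality} as
\[
p_{n,j}=c_{n,j}^{-1}\innerprod{\phi,\chi_{n,J_n+1-j}}_{r^{-1}} .
\]
By Lemma~\ref{lemma:control-cnj}, $|c_{n,j}|^{-1}$ is uniformly bounded. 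Since the normalized root vectors form Riesz bases, Corollary~\ref{cor:basis}\ref{item:cor-basis-vi},\ref{item:cor-basis-vii} give the norm equivalences
\[
\|\phi\|_{L^2}^2\sim\sum_{n,j}|p_{n,j}|^2,\qquad
\|\phi\|_{H^1}^2\sim\sum_{n,j}|\lambda_n|\,|p_{n,j}|^2=\sum_{n,j}|\beta_n|^2|p_{n,j}|^2 .
\]
Here the $L^2$-normalization $\|\phi_{n,j}\|_{r^{-1}}\sim1$ is used, together with $\|\phi_{n,j}\|_{H^1}\sim|\lambda_n|^{1/2}$. For the finitely many preasymptotic $n$ the latter holds trivially, with constants depending on $\Ar$. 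Moreover, by Corollary~\ref{cor:basis}\ref{item:cor-basis-iii}, $J_n=1$ for $n>n_0$, so $J:=\max_n J_n<\infty$.

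The next step is the block estimate. From \eqref{eq:dtn-operator-jordan-chains}, $F_{n,j}=i\beta_np_{n,j}+\sum_{k>j}(\pm)d\,p_{n,k}(2i\beta_n)^{-(2(k-j)-1)}$. The number of terms is at most $J$, the coefficients $d$ are finitely many, and $\inf_n|\beta_n|>0$ by Corollary~\ref{cor:basis}\ref{item:cor-basis-vi}. Hence
\[
|F_{n,j}|\le C\,|\beta_n|\sum_{k\ge j}|p_{n,k}| .
\]
Summing over blocks and using Cauchy--Schwarz inside each block (of length at most $J$) yields
\[
\sum_{n,j}|F_{n,j}|^2\lesssim\sum_{n,j}|\beta_n|^2|p_{n,j}|^2\sim\|\phi\|_{H^1}^2 .
\]
The $L^2$ Riesz property of $\{\phi_{n,j}\}$ then gives $\|\sum F_{n,j}\phi_{n,j}\|_{L^2}\lesssim\|\phi\|_{H^1}$. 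Multiplication by $1/r$ is harmless since $r\ge r_1>0$. This proves boundedness $H^1\to L^2$. Density of finite sums and uniqueness of the extension are immediate.

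For the map $L^2\to(H^1)'$, define $\dtn\phi$ by duality:
\[
\langle\dtn\phi,v\rangle:=\sum_{n,j}F_{n,j}\,\innerprod{r^{-1}\phi_{n,j},v}_{L^2}.
\]
Expand $\phi_{n,j}$ in the $H^1$-Riesz basis picture through the dual (biorthogonal) system, that is, write $r^{-1}v$ in terms of $\{\phi_{n,j}\}$ with coefficients $q_{n,j}$. Then $\innerprod{r^{-1}\phi_{n,j},v}$ is a combination of the $\overline{q_{m,k}}$ weighted by the Gram entries. I would avoid that messy Gram structure by instead bounding
\[
\Big|\sum_{n,j}F_{n,j}\,\overline{g_{n,j}}\Big|,\qquad g_{n,j}:=\innerprod{v,r^{-1}\phi_{n,j}}_{L^2}.
\]
By Cauchy--Schwarz this is at most $\big(\sum|\beta_n|^{-2}|F_{n,j}|^2\big)^{1/2}\big(\sum|\beta_n|^2|g_{n,j}|^2\big)^{1/2}$. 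The first factor is $\lesssim\big(\sum|p_{n,j}|^2\big)^{1/2}\sim\|\phi\|_{L^2}$ by the block estimate divided by $|\beta_n|$. The second factor needs $\sum_{n,j}|\beta_n|^2|\innerprod{v,r^{-1}\phi_{n,j}}|^2\lesssim\|v\|_{H^1}^2$.

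This last Bessel-type inequality with the growing weight $|\beta_n|^2$ is the main obstacle. My first attempt would integrate by parts using the eigen relation $r^{-1}\phi_{n,j}=r^{-2}\lambda_n^{-1}(\Ar\phi_{n,j}-\phi_{n,j-1})$, which trades the weight $|\lambda_n|$ against one derivative. The quantity $\lambda_n^{-1}\innerprod{v,r^{-1}\Ar\phi_{n,j}}$ is a bounded sesquilinear form in $(v,\phi_{n,j})$ on $H^1\times H^1$, by the weak form \eqref{eq:bessel-EVP-weak} including the boundary term. Therefore
\[
|\beta_n|\,|g_{n,j}|\lesssim|\lambda_n|^{-1/2}\,|\mathfrak{t}(\phi_{n,j},v)|+\text{lower order},
\]
where the lower-order contributions come from the $\phi_{n,j-1}$ term and are controlled using $\inf_n|\lambda_n|>0$ and $J<\infty$. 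Since $\{|\lambda_n|^{-1/2}\phi_{n,j}\}$ is a Riesz basis of $H^1$ and $v\mapsto\mathfrak{t}(\cdot,v)$ is an element of $(H^1)'$, the square sum is bounded by a Bessel inequality in $H^1$ by $\|v\|_{H^1}^2$. Finally, interpolation between the two bounds gives $H^{1/2}\to(H^{1/2})'$, and all constants depend only on the Riesz constants, $J$, $\inf|c_{n,j}|$ and $\inf|\beta_n|$, hence only on $\Ar$.
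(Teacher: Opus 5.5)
Your proposal is correct. The $H^1\to L^2$ part is essentially the paper's Steps~1--2: the block estimate for $F_{n,j}$ uses $J_n=1$ for large $n$ and $\inf_n|\beta_n|>0$, and is followed by the $H^1$ and $L^2$ Riesz properties.

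For $L^2\to(H^1)'$ you take a genuinely different route.

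\emph{The paper's route.} It expands the test function in the \emph{adjoint} root vectors, $\xi=\sum q_{n,j}\chi_{n,j}$. The biorthogonality \eqref{eq:dtn-biorthogonality} then collapses the pairing to $\sum F_{n,j}\overline{q}_{n,J_n+1-j}c_{n,j}$. The proof closes with the $H^1$ Riesz property of $\{\chi_{n,j}\}$ from Corollary~\ref{cor:basis}\ref{item:cor-basis-viii}.

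\emph{Your route.} You reduce everything to the weighted Bessel inequality $\sum|\beta_n|^2|\innerprod{v,\phi_{n,j}}_{r^{-1}}|^2\lesssim\|v\|_{H^1}^2$. This is the same inequality the paper obtains implicitly, since $\innerprod{v,\phi_{n,j}}_{r^{-1}}$ is, up to $\overline{c_{n,j}}$, the coefficient of $v$ in the adjoint basis. You prove it intrinsically via $\phi_{n,j}=\lambda_n^{-1}(\Ar\phi_{n,j}-\phi_{n,j-1})$.
\begin{itemize}
\item This relation turns $|\lambda_n|^{1/2}\innerprod{v,\phi_{n,j}}_{r^{-1}}$ into $|\lambda_n|^{-1/2}\overline{\mathfrak{t}(\phi_{n,j},v)}$ plus a $\phi_{n,j-1}$ term.
\item The main part is the upper (Bessel) bound of the $H^1$ Riesz basis $\{|\lambda_n|^{-1/2}\phi_{n,j}\}$, applied to the functional $\mathfrak{t}(\cdot,v)\in(H^1)'$.
\item The $\phi_{n,j-1}$ term vanishes for $n>n_0$. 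Alternatively, it is absorbed recursively in $j$ using $\inf_n|\lambda_n|>0$ and $J<\infty$.
\end{itemize}

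\emph{What each buys.} Your argument uses only the primal system $\{\phi_{n,j}\}$. It needs no adjoint basis and none of the biorthogonality or index-reversal bookkeeping of Appendix~\ref{appendix:3}. In fact, your formula $p_{n,j}=c_{n,j}^{-1}\innerprod{\phi,\chi_{n,J_n+1-j}}_{r^{-1}}$ and Lemma~\ref{lemma:control-cnj} can be dropped entirely, since the coefficients exist by the Riesz property. The price is that you use the concrete structure of $\Ar$: its sesquilinear form, including the impedance trace term, is bounded on $H^1\times H^1$. The paper's argument is purely abstract once both Riesz bases are available. It also matches the biorthogonal block structure used later in the stability analysis.

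One small slip: the relation should read $r^{-1}\phi_{n,j}=r^{-1}\lambda_n^{-1}(\Ar\phi_{n,j}-\phi_{n,j-1})$, not with $r^{-2}$. Your subsequent pairing against $r^{-1}\Ar\phi_{n,j}$ is the correct one.
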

\begin{proof} 
We make use of the shorthand $F_{n,j}$ from 
(\ref{eq:dtn-operator-jordan-chains}).  

\emph{Step 1:} There is $C > 0$ independent of $n$ such that 
\begin{align}
\label{eq:lemma:mapping-properties-DtN-5}
\sum_{j=1}^{J_n} |F_{n,j}|^2 \|\phi_{n,j}\|^2_{L^2} \leq C |\beta_n|^2 
 \sum_{j=1}^{J_n} |p_{n,j}|^2 \|\phi_{n,j}\|^2_{L^2}. 
\end{align}
To see this, first note that $J_n = 1$ for $n \ge N_0$ by 
Corollary~\ref{cor:basis}\ref{item:cor-basis-iii}. 
so that the sum in  
(\ref{eq:lemma:mapping-properties-DtN-5}) and in 
(\ref{eq:dtn-operator-jordan-chains}) reduce to a single term and any
$C \ge 1$ may be chosen. 
For the remaining finitely many cases $n \leq N_0$, we observe that 
by  
Corollary~\ref{cor:basis}\ref{item:cor-basis-vi}
one has $\inf_{n} |\beta_n| > 0$ so that 
\begin{align}
\label{eq:lemma:mapping-properties-DtN-7} 
|F_{n,j}| \|\phi_{n,j}\|_{L^2} \leq C |\beta_n| \sum_{k=j}^{J_n} |p_{n,k}| 
\|\phi_{n,j}\|_{L^2} 
\leq C |\beta_n| \sqrt{J_n} 
\max_{k=1}^{J_n} \frac{\|\phi_{n,j}\|_{L^2}}{\|\phi_{n,k}\|_{L^2}} 
\left(\sum_{k=1}^{J_n} |p_{n,k}|^2 \|\phi_{n,k}\|^2_{L^2} \right)^{1/2}. 
\end{align}
Since the functions $\{\phi_{n,j}\}_{j=1}^{J_n}$ do not vanish 
and only finitely many cases for $n$ need to be considered in 
(\ref{eq:lemma:mapping-properties-DtN-7}), the desired result 
(\ref{eq:lemma:mapping-properties-DtN-5}) follows. 

\emph{Step 2:} Expand $\phi \in H^1(r_1,r_2)$ as
$\phi = \sum_{n=1}^\infty \sum_{j=1}^{J_n} p_{n,j} \phi_{n,j}$. 
Since $\{\phi_{n,j}/\|\phi_{n,j}\|_{H^1}\}_{j,n}$ is a Riesz basis of $H^1(r_1,r_2)$ by Corollary~\ref{cor:basis}\ref{item:cor-basis-v} 
and since $|\beta_n|^2 \|\phi_{n,j}\|^2_{L^2} \sim \|\phi_{n,j}\|^2_{H^1}$ by Corollary~\ref{cor:basis}\ref{item:cor-basis-vi}
we get 
\begin{align} 
\label{eq:lemma:mapping-properties-DtN-20}
\|\phi\|^2_{H^1} 
\sim \sum_{n=1}^\infty \sum_{j=1}^{J_n} |p_{n,j}|^2 \|\phi_{n,j}\|^2_{H^1}
\sim \sum_{n=1}^\infty \sum_{j=1}^{J_n} |\beta_n|^2 |p_{n,j}|^2 \|\phi_{n,j}\|^2_{L^2}. 
\end{align} 
Using that $\{\phi_{n,j}\colon j=1,\ldots,J_n, n \in \doubleIN\}$ is a Riesz-basis 
in $L^2$ by Corollary~\ref{cor:basis}\ref{item:cor-basis-iv}
with the shorthand $F_{n,j}$ from 
\eqref{eq:dtn-operator-jordan-chains}  we arrive at 
\begin{align*}
\|\dtn \phi\|^2_{L^2} &\lesssim \sum_{n=1}^\infty \sum_{j=1}^{J_n} 
|F_{n,j}|^2 \|\phi_{n,j}\|^2_{L^2} 
\stackrel{
(\ref{eq:lemma:mapping-properties-DtN-5})}{ \lesssim }
\sum_{n=1}^\infty
\sum_{j=1}^{J_n} |\beta_n |^2 |p_{n,j}|^2 \|\phi_{n,j}\|^2_{L^2} 
\stackrel{(\ref{eq:lemma:mapping-properties-DtN-20})}{ \lesssim }
\|\phi\|^2_{H^1}. 
\end{align*}
Hence, $\dtn\in L(H^1(r_1,r_2),L^2(r_1,r_2))$. 

\emph{Step 3:} $\dtn$ can be extended to a bounded linear  operator 
$L^2(r_1,r_2) \rightarrow \left(H^1(r_1,r_2)\right)^\prime$. To see this, 
expand $\phi \in L^2(r_1,r_2)$ and $\phitf \in H^1(r_1,r_2)$ as 
$\phi = \sum_{n=1}^\infty \sum_{j=1}^{J_n} p_{n,j} \phi_{n,j}$ and 
$\phitf = \sum_{n=1}^\infty \sum_{j=1}^{J_n} q_{n,j} \chi_{n,j}$.  By the Riesz basis properties
asserted in Corollary~\ref{cor:basis}\ref{item:cor-basis-iv}, \ref{item:cor-basis-v}, \ref{item:cor-basis-viii}
we have 
\begin{align}
\label{eq:lemma:mapping-properties-DtN-50}
\|\phi\|^2_{L^2} &\sim 
\sum_{n=1}^\infty \sum_{j=1}^{J_n} |p_{n,j}|^2 \|\phi_{n,j}\|^2_{L^2(r_1,r_2)}, 
&
\|\phitf\|^2_{H^1} &\sim 
\sum_{n=1}^\infty \sum_{j=1}^{J_n} |q_{n,j}|^2 \|\chi_{n,j}\|^2_{H^1(r_1,r_2)}. 
\end{align}
We estimate, using the shorthand $F_{n,j}$ from 
\eqref{eq:dtn-operator-jordan-chains} and the biorthogonality \eqref{eq:dtn-biorthogonality}
\begin{align*}
\left| \innerprod{ \dtn \phi, r \phitf} _{r^{-1}}\right|  & = 
\left| \sum_{n=1}^\infty \sum_{j=1}^{J_n} F_{n,j} \overline{q}_{n,J_n+1-j} 
\innerprod{ \phi_{n,j}, \chi_{n,J_n + 1 - j}} _{r^{-1}} 
\right| \\
& 
\lesssim 
\sum_{n=1}^\infty \sum_{j=1}^{J_n} 
|F_{n,j}| \|\phi_{n,j}\|_{r^{-1}} |q_{n,J_n + 1 -j}| \|\chi_{n,J_n + 1 -j}\|_{r^{-1}}  \\
&\lesssim 
\left(\sum_{n=1}^\infty \sum_{j=1}^{J_n} |\beta_n|^{-2} |F_{n,j}|^2 \|\phi_{n,j}\|^2_{L^2}
\right)^{1/2} 
\left(\sum_{n=1}^\infty \sum_{j=1}^{J_n} |\beta_n|^{2} |q_{n,j}|^2 \|\chi_{n,j}\|^2_{L^2} 
\right)^{1/2}  \\
&\stackrel{ \eqref{eq:lemma:mapping-properties-DtN-5} }
{\lesssim } 
\left(\sum_{n=1}^\infty \sum_{j=1}^{J_n} |p_{n,j}|^2 \|\phi_{n,j}\|^2_{L^2} 
\right)^{1/2}  
\left(\sum_{n=1}^\infty \sum_{j=1}^{J_n} |\beta_n|^{2} |q_{n,j}|^2 
\frac{\|\chi_{n,j}\|^2_{L^2} }{\|\chi_{n,j}\|^2_{H^1}} 
\|\chi_{n,j}\|^2_{H^1} 
\right)^{1/2}  \\
&\stackrel{
\eqref{eq:lemma:mapping-properties-DtN-50}, 
\text{Cor.~\ref{cor:basis}\ref{item:cor-basis-vi}}
}
{\lesssim }
\|\phi\|_{L^2} \|\phitf\|_{H^1}. 
\end{align*}
\end{proof}
%

\subsection{Stability analysis}
\label{sec:stability-for-jordan-chains}
We generalize \cite[Lem.~{3.2}]{Melenk_Demkowicz_Henneking_25}: 
\begin{lemma}
\label{lemma:paper1-lemma3.2}
Let $L  > 0$ and $\kappa \in \doubleIC_{\ge 0}:= \{z \in \doubleIC\,|\, \Re z \ge 0\}$ satisfy  $|\kappa| L \ge c_0 > 0$.  
Define on $H^1(0,L)$ 
\begin{align*}
a^{1D}_\kappa (u,v) &:= \innerprod{u^\prime,v^\prime}_{L^2(0,L)} + \kappa^2 \innerprod{u,v}_{L^2(0,L)} + \kappa u(L) \overline{v}(L),  \\
\|u\|^2_{1,|\kappa|} &:= \|u^\prime\|^2_{L^2(0,L)} + |\kappa|^2 \|u\|^2_{L^2(0,L)}. 
\end{align*}
Let $V \in \{H^1(0,L),H^1_{(0}(0,L)\}$. 
Then, there is $C > 0$ independent of $\kappa$, $L$ (but dependent on $c_0$) such that for all $w_1$, $w_2 \in L^2(0,L)$ and $g \in \doubleIC$ the 
solution $p \in V$ of 
\begin{align*}
a^{1D}_\kappa(p,v) &= \innerprod{w_1,v}_{L^2(0,L)} + \innerprod{w_2,v^\prime}_{L^2(0,L)} + g \overline{v}(L) \quad \forall v \in V
\end{align*}
satisfies 
\begin{align*}
\|p\|_{1,|\kappa|} \leq C \gamma_{\kappa L} \left[ |\kappa|^{-1} \|w_1\|_{L^2(0,L)} + \|w_2\|_{L^2(0,L)} + |\kappa|^{-1/2} |g|\right]
\end{align*}
with $\gamma_{\kappa L}$ given by \eqref{eq:gamma_kappa}. 
\end{lemma}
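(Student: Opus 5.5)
The plan is to prove the estimate from an explicit Green's function representation rather than from an energy argument. Testing with $v=p$ and taking the real part of $\overline{\kappa}\,a^{1D}_\kappa(p,p)$ gives
\[
\Re\kappa\,\|p\|_{1,|\kappa|}^2+|\kappa|^2|p(L)|^2 .
\]
This only yields a constant of size $|\kappa|/\Re\kappa$, which blows up as $\kappa$ approaches the imaginary axis. That case is exactly the one the factor $\gamma_{\kappa L}$ is meant to capture.

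\emph{Green's function.} The strong form of the problem is
\[
-p''+\kappa^2p=w_1-w_2' \ \text{in }(0,L),\qquad p'(L)+\kappa p(L)=g+w_2(L),
\]
with $p(0)=0$ for $V=H^1_{(0}(0,L)$ and $p'(0)=0$ for $V=H^1(0,L)$ (understood weakly). Take $u_+(x)=e^{-\kappa x}$, which satisfies $u_+'+\kappa u_+=0$ exactly. Take $u_-(x)=\sinh(\kappa x)$ (Dirichlet case) or $u_-(x)=\cosh(\kappa x)$ (Neumann case). A direct computation gives the constant Wronskian $W=u_-u_+'-u_-'u_+=-\kappa$ in both cases. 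Since $\kappa\neq0$, this gives uniqueness, hence well-posedness. The Green's function is
\[
G(x,y)=\kappa^{-1}u_-(\min\{x,y\})\,u_+(\max\{x,y\}).
\]
Because $\Re\kappa\ge0$, $|u_-(s)|\le e^{\Re\kappa\, s}$, so
\[
|G(x,y)|\le|\kappa|^{-1}e^{-\Re\kappa|x-y|},\qquad |\partial_xG|,\ |\partial_yG|\le e^{-\Re\kappa|x-y|}.
\]
The solution is then
\[
p(x)=\int_0^LG(x,y)w_1(y)\,dy+\int_0^L\partial_yG(x,y)\,w_2(y)\,dy+g\,\frac{u_-(x)}{\kappa e^{\kappa L}}.
\]
In the middle term the boundary contribution of $w_2(L)$ cancels, because $G(\cdot,L)$ carries the boundary data. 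I would verify this representation by testing against $v\in V$.

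\emph{Kernel estimates.} Young's inequality with $\int_0^L e^{-\Re\kappa|s|}\,ds\le 2m$, where $m:=\min\{L,1/\Re\kappa\}$, gives
\[
\|p\|\lesssim|\kappa|^{-1}m\|w_1\|+m\|w_2\|+|\kappa|^{-1}m^{1/2}|g|.
\]
The same bounds hold for $\|p'\|$ with one power of $|\kappa|$ more. The only exception is that $\partial_x\partial_yG$ contains a delta from the jump of $\partial_xG$ across the diagonal; that part reproduces $w_2$ itself, and the smooth remainder is bounded by $|\kappa|e^{-\Re\kappa|x-y|}$. The key elementary inequality is
\[
|\kappa|\,m\le m\Re\kappa+m|\Im\kappa|\le1+\frac{2|\Im\kappa|L}{1+\Re\kappa L}\lesssim\gamma_{\kappa L}.
\]
Here $m\le 2L/(1+\Re\kappa L)$, and $\gamma_{\kappa L}$ is read with $|\Im\kappa|$, matching its use in Lemma~\ref{lemma:control-Imbeta_n}. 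The constant $c_0$ only enters to make the bounds uniform as $|\kappa|L$ ranges over $[c_0,\infty)$. Using $\gamma\ge1$, this yields the $w_1$ and $w_2$ terms. For the $g$ term, $|\kappa|^{1/2}m^{1/2}\lesssim\gamma^{1/2}\le\gamma$ gives the factor $|\kappa|^{-1/2}$.

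\emph{Main obstacle.} I expect the $w_2$ term to be the delicate step. It requires the careful integration by parts in $y$, the check that the boundary term at $L$ is absorbed, and the separation of the distributional (delta) part of $\partial_x\partial_yG$ so that the derivative bound is $\|w_2\|$ with an $O(1)$ constant rather than something worse. I would handle this first in the Dirichlet case and then repeat it verbatim with $\cosh$ for the Neumann case.
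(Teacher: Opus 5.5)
Your Green's-function argument is correct, and it takes a genuinely different route from the paper's. The paper never solves the ODE. It quotes the case $g=0$ from \cite{Melenk_Demkowicz_Henneking_25}, i.e.\ inf-sup stability of $a^{1D}_\kappa$ in $\|\cdot\|_{1,|\kappa|}$ with stability constant $\lesssim\gamma_{\kappa L}$. There the data $w_1,w_2$ enter only through the dual norm of $v\mapsto(w_1,v)+(w_2,v')$, which is at most $|\kappa|^{-1}\|w_1\|+\|w_2\|$. It then treats $g$ as one more term of the right-hand-side functional, bounding $|g\,\overline{v}(L)|$ by the scaled embedding $|\kappa|^{1/2}\|v\|_{L^\infty(0,L)}\lesssim\|v\|_{1,|\kappa|}$. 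That embedding is where the paper's argument uses $|\kappa|L\ge c_0$; it fails for constants as $|\kappa|L\to0$.

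You instead bound the solution operator itself, via $|G|\le|\kappa|^{-1}e^{-\Re\kappa|x-y|}$, $|\partial_xG|,|\partial_yG|\le e^{-\Re\kappa|x-y|}$, the diagonal delta in $\partial_x\partial_yG$, and the Schur test. This is self-contained, gives existence and uniqueness along the way, and shows where $\gamma_{\kappa L}$ comes from: it is the size of $|\kappa|\min\{L,1/\Re\kappa\}$. It even proves slightly more than stated. Nothing in your estimates needs $|\kappa|L\ge c_0$; only $\kappa\ne0$ is used, to get $W\ne0$. You bound the $g$-part directly by $m^{1/2}|g|\le|\kappa|^{-1/2}\max\{1,|\kappa|m\}|g|$ rather than through the dual norm of point evaluation, so your remark that $c_0$ ``makes the bounds uniform'' is superfluous. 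The paper's route, in turn, is shorter given the citation and stays within the inf-sup framework used elsewhere in the paper.

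Reading $\gamma_{\kappa L}$ with $|\Im\kappa|$ is not merely a convention but necessary. For $\Im\kappa<0$ the literal formula can be $\le0$, e.g.\ $\kappa=-iK$ with $KL>1$.

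One small slip: for $V=H^1(0,L)$ the natural condition at $0$ is $p'(0)=w_2(0)$, not $p'(0)=0$. The resulting boundary term $-G(x,0)w_2(0)$ cancels exactly like the one at $L$. So your representation $p=\int_0^LG(\cdot,y)w_1\,dy+\int_0^L\partial_yG(\cdot,y)w_2\,dy+g\,G(\cdot,L)$ is nonetheless the correct weak solution.
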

\begin{proof}
\cite[Lem.~{3.2}]{Melenk_Demkowicz_Henneking_25} covers the case $g = 0$. To include the case $g \ne 0$, we note that 
\cite[Lem.~{3.2}]{Melenk_Demkowicz_Henneking_25} establishes the inf-sup constants for the sesquilinear form $a^{1D}_{\kappa}$. To include the 
impedance boundary condition involving $g$, we observe that the condition $|\widehat{\kappa}|:= |\kappa| L \ge c_0 > 0$ 
and the Sobolev embedding theorem on the reference interval $\hat{I} = (0,1)$ imply
\begin{equation}
\label{eq:sobolev-embedding-reference}
\forall \widehat{v} \in H^1(0,1) \colon \quad 
\sqrt{|\widehat{\kappa}|} \|\widehat{v}\|_{L^\infty(0,1)} 
\leq C \left[ \|\widehat{v}\|_{H^1(\hat{I})} + |\widehat\kappa| \|\widehat{v}\|_{L^2(\hat{I})}\right] 
\end{equation}
for a constant $C > 0$ depending solely on $c_0$.  Inspection of the procedure in 
\cite[Lem.~{3.2}]{Melenk_Demkowicz_Henneking_25} then shows the result. 
We finally point out that the constant $\gamma_{\kappa L}$ of 
\eqref{eq:gamma_kappa} differs from the corresponding one in 
\cite[Lem.~{3.2},(i)]{Melenk_Demkowicz_Henneking_25} by a factor $L$ due to a typographical error there.  
\end{proof}
In the following, we will use Lemma~\ref{lemma:paper1-lemma3.2} with $L = \tr$ and $\kappa = -i \beta_n$ so that we will work with the norm
\begin{align}
\|v\|^2_{1,|\beta_n|} & = |\beta_n|^2 \|v\|^2_{L^2(0,\tr)} + \|v^\prime\|^2_{L^2(0,\tr)}. 
\end{align}
The analysis of the variational formulation \eqref{eq:Helmholtz-polar-var} proceeds 
similarly to the procedure in Section~\ref{sec:decoupling-no-jordan-chains}. Using the representation $w \in \Hdir$ of the right-hand side 
of \eqref{eq:Helmholtz-polar-var} already defined in \eqref{eq:riesz-representer-of-rhs}, we have to solve 
\begin{align*}
\ap(u,v) = (w,v)_{H^1(\Omega)} \quad \forall v \in \Hdir. 
\end{align*}
First, we calculate for $p$, $q \in H^1_{(0}(0,\tr)$ and ${\phi} \in \dom(\Ar)$, ${\chi} \in \dom(\Ar^\star)$ 
\begin{align*}
&\ap( p(\theta) {\phi}(r), q(\theta) {\chi}(r)) = \\
& \quad \innerprod{ \Ar {\phi}, {\chi}}_{r^{-1}} \innerprod{p,q}_{L^2(0,\tr)} 
+ \innerprod{ {\phi}, {\chi}}_{r^{-1}} \innerprod{p^\prime,q^\prime}_{L^2(0,\tr)}
- \innerprod{ \dtn p(\tr) \phi, q(\tr) \chi }_{L^2(r_1,r_2)}. 
\end{align*}
Next, for each Jordan chain, the $\dtn$-operator from \eqref{eq:dtn-operator-jordan-chains} takes the following 
form 
\begin{align*}
\innerprod{\dtn \sum_{j=1}^{J_n} p_{n,j} \phi_{n,j} , \chi}_{L^2(r_1,r_2)} & = 
 \innerprod{ \sum_{j=1}^{J_n} F_{n,j}  \phi_{n,j},\chi  }_{r^{-1}} 
 = \sum_{j=1}^{J_n} \sum_{k=j}^{J_n} D_{k,j,n} p_{n,k} \innerprod{ \phi_{n,j}, \chi}_{r^{-1}}, 
\end{align*}
where 
\begin{align*}
D_{j,j,n} & = i {\beta}_n, 
& 
D_{k,j,n} & = (-1)^{-j+k-1} \frac{d_{J_n - k, J_n-j} }{(2i {\beta}_n)^{2(k-j)-1}}, 
\quad k > j. 
\end{align*}
We consider a modal decomposition of the solution $u$ of \eqref{eq:Helmholtz-polar-var}: 
\begin{equation*} 
u(r,\theta) = \sum_{n=1}^\infty \sum_{j=1}^{J_n} p_{n,j}(\theta) {\phi}_{n,j}(r), 
\qquad p_{n,j} \in H^1_{(0}(0,\tr). 
\end{equation*} 
Inserting this expansion in \eqref{eq:Helmholtz-polar-var} and testing with $(r,\theta) \mapsto {\chi}_{n,J_n+1-j'}(r) q_{j'}(\theta)$, 
where $j'=1,\ldots,J_n$ and $q_{j'} \in H^1_{(0}(0,\tr)$, 
we get in view of the biorthogonality satisfied by the functions ${\phi}_{n,j}$, ${\chi}_{n,j}$ (cf.\ \eqref{eq:dtn-biorthogonality})
and the abbreviations $p_{n,J_n+1} \equiv 0$, ${\phi}_{n,0} = 0$, as well as the definition of the $c_{n,j}$ from \eqref{eq:dtn-biorthogonality-c}
\begin{align*}
	& \ap(\sum_{n=1}^\infty\sum_{j=1}^{J_n} p_{n,j} \phi_{n,j}, q_{j'} \chi_{n,J_n+1-j'})   \\
&=  
\sum_{j=1}^{J_n} \innerprod{\Ar {\phi}_{n,j}, {\chi}_{J_n+1-j'}}_{r^{-1}} \innerprod{ p_{n,j} q_{j'}}_{L^2} + 
\innerprod{ {\phi}_{n,j},{\chi}_{n,J_n+1-j'}}_{r^{-1}} \innerprod{ p^\prime_{n,j}, q_{j'}^\prime} _{L^2} \\
& \qquad \mbox{}
- \innerprod{ {\phi}_{n,j}, {\chi}_{n,J_n+1-j'}}_{r^{-1}} \sum_{k=j}^{J_n} D_{k,j,n} p_{n,k}(\tr) \overline{q}_{j'}(\tr)  \\
&  = 
\sum_{j=1}^{J_n} \innerprod{ {\lambda}_n{\phi}_{n,j} +{\phi}_{n,j-1} , {\chi}_{J_n+1-j'}}_{r^{-1}} \innerprod{ p_{n,j},q_{j'}}_{L^2} + 
c_{n,j'} \delta_{j,j'}  \innerprod{ p^\prime_{n,j}, q_{j'}^\prime}_{L^2} 
- c_{n,j'} \delta_{j,j'}  \sum_{k=j}^{J_n} D_{k,j,n} p_{n,k}(\tr) \overline{q}_{j'}(\tr)  \\
& = \sum_{j=1}^{J_n} c_{n,j} \delta_{j,j'} \left[ {\lambda}_n \innerprod{ p_{n,j}, q_{j'}}_{L^2} + \innerprod{ p^\prime_{n,j}, q^\prime_{j'}}_{L^2} 
- i {\beta}_n p_{n,j}(\tr) \overline{q}_{j'}(\tr) \right] \\
& \quad \mbox{} + 
\bigl(\sum_{j=1}^{J_n} c_{n,j'} \delta_{j-1,j'} \innerprod{ p_{n,j-1},q_{j'}}_{L^2}\bigr) 
     - c_{n,j'}  \sum_{k=j'+1}^{J_n} D_{k,j',n} p_{n,k}(\tr) \overline{q}_{j'}(\tr)   \\
& = c_{n,j'} \left[ -{\beta}_n^2 \innerprod{ p_{n,j'}, q_{j'}}_{L^2} + \innerprod{ p^\prime_{n,j'}, q^\prime_{n,j'}}_{L^2} - i {\beta}_n p_{n,k}(\tr) \overline{q}_{j'}(\tr) 
\right] \\
& \qquad \mbox{} + c_{n,j'} \Bigl[ \innerprod{p_{n,j'+1}, q_{j'}}_{L^2} - \sum_{k=j'+1}^{J_n}  D_{k,j',n} p_{n,k}(\tr) \overline{q}_{j'}(\tr)\Bigr] \\
&\stackrel{!}{=} \innerprod{ w,{\chi}_{J_n+1-j'} q_{j'}}_{H^1(\Omega)} 
 = \innerprod{ w_{1,n,j'}, q_{j'}}_{L^2} + \innerprod{ w_{2,n,j'}, q^\prime_{j'}}_{L^2}, 
\end{align*}
where we set 
\begin{align*}
w_{1,n,j'}(\theta)&:= \innerprod{ w(\cdot,\theta),{\chi}_{n,J_n+1-j'}}_{L^2(r_1,r_2)} + \innerprod{\partial_r w(\cdot,\theta), {\chi}^\prime_{n,J_n+1-j'}}_{L^2(r_1,r_2)}, \\  
w_{2,n,j'}(\theta)&:= \innerprod{\partial_\theta w(\cdot,\theta),{\chi}_{n,J_n+1-j'}}_{L^2(r_1,r_2)}.  
\end{align*}
The above system is an upper triangular system that can be solved by backsubstitution: with the sesquilinear form $a^{1D}_{-i {\beta}_n}$ of 
Lemma~\ref{lemma:paper1-lemma3.2} we have to find, for $j'=J_n,J_n-1,\ldots,1$, the functions $p_{n,j'}$ given by 
\begin{align}
\label{eq:pnj-block}
a^{1D}_{-i {\beta}_n}(p_{n,j'},q) & = 
c_{n,j'}^{-1} \left[
\innerprod{w_{1,n,j'},q}_{L^2} + \innerprod{w_{2,n,j'},q'}_{L^2}\right] - \innerprod{p_{n,j'+1},q}_{L^2} + \sum_{k=j'+1}^{J_n} D_{k,j',n}  p_{n,k}(\tr) \overline{q}(\tr) 
\end{align}
for all $q \in H^1_{(0}(0,\tr)$. 
Solving for these functions $p_{n,j'}$, we have 
\begin{lemma}
\label{lemma:backsubstitution-Jordan-block}
Assume $ \tr |{\beta}_n| \ge c_0>0$. Then
there is $\widehat{C} > 0$ depending only on $\Ar$  and $c_0$  such that
for $j=1,\ldots,J_n$, the solution $p_{n,j}$ of \eqref{eq:pnj-block} satisfy 
for $m=1,\ldots,J_n$
\begin{align*} 
\sum_{j=1}^m \|p_{n,J_n+1-j}\|_{1,|{\beta}_n|} 
 & \leq \widehat{C} \gamma_{-i {\beta}_n \tr}  \sum_{j=1}^{m} 
&\left(1 + \widehat{C} \frac{\gamma_{-i{\beta}_n \tr}}{|{\beta}_n|}\right)^{m-j}   
\left[ |{\beta}_n|^{-1} \|w_{1,n,J_n+1-j}\|_{L^2} + \|w_{2,n,J_n+1-j}\|_{L^2} \right] . 
\end{align*} 
\end{lemma}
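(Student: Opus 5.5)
The plan is an induction on $m$, following the back-substitution order $j'=J_n, J_n-1,\ldots,1$ in \eqref{eq:pnj-block}. Write $e_j:=\|p_{n,J_n+1-j}\|_{1,|\beta_n|}$ and $W_j:=|\beta_n|^{-1}\|w_{1,n,J_n+1-j}\|_{L^2}+\|w_{2,n,J_n+1-j}\|_{L^2}$. Set $\gamma:=\gamma_{-i\beta_n\tr}$ and $\epsilon:=\widehat C\gamma/|\beta_n|$.

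For each $j'$ I would apply Lemma~\ref{lemma:paper1-lemma3.2} with $L=\tr$, $\kappa=-i\beta_n$ and $V=H^1_{(0}(0,\tr)$. The hypotheses hold: $\Re\kappa=\Im\beta_n>0$ and $|\kappa|L\ge c_0$. The data are
\[
w_1=c_{n,j'}^{-1}w_{1,n,j'}-p_{n,j'+1},\qquad w_2=c_{n,j'}^{-1}w_{2,n,j'},\qquad g=\sum_{k>j'}D_{k,j',n}\,p_{n,k}(\tr).
\]
The lemma gives
\[
\|p_{n,j'}\|_{1,|\beta_n|}\le C\gamma\bigl[|c_{n,j'}|^{-1}W + |\beta_n|^{-1}\|p_{n,j'+1}\|_{L^2}+|\beta_n|^{-1/2}|g|\bigr].
\]
The factor $|c_{n,j'}|^{-1}$ is bounded uniformly by Lemma~\ref{lemma:control-cnj}. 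For the coupling term I use $\|p_{n,j'+1}\|_{L^2}\le|\beta_n|^{-1}\|p_{n,j'+1}\|_{1,|\beta_n|}$, so it contributes $C\gamma|\beta_n|^{-2}e_{\cdot}$. This is even better than the claimed $|\beta_n|^{-1}$, using $\inf_n|\beta_n|>0$ from Corollary~\ref{cor:basis}\ref{item:cor-basis-vi}.

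The point-value term $g$ is the step I expect to need the most care. I bound $|p_{n,k}(\tr)|$ by the trace estimate \eqref{eq:sobolev-embedding-reference} rescaled to $(0,\tr)$. Rescaling gives $|\beta_n|^{1/2}\|v\|_{L^\infty(0,\tr)}\le C\|v\|_{1,|\beta_n|}$, where the constant depends on $c_0$ and the condition $\tr|\beta_n|\ge c_0$ is used exactly here. Since $|D_{k,j',n}|\lesssim|\beta_n|^{-(2(k-j')-1)}$ and $\inf|\beta_n|>0$, each summand satisfies $|\beta_n|^{-1/2}|D_{k,j',n}||p_{n,k}(\tr)|\le C|\beta_n|^{-1}\|p_{n,k}\|_{1,|\beta_n|}$. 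The Catalan coefficients $d$ are fixed numbers, and only the finitely many $n$ with $J_n>1$ matter by Corollary~\ref{cor:basis}\ref{item:cor-basis-iii}, so the constants depend only on $\Ar$.

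Collecting these bounds yields the recursive inequality $e_j\le \widehat C\gamma W_j+\epsilon\sum_{i<j}e_i$. I then prove the claim $S_m:=\sum_{j\le m}e_j\le \widehat C\gamma\sum_{j\le m}(1+\epsilon)^{m-j}W_j$ by induction on $m$. From the recursion, $S_m=S_{m-1}+e_m\le(1+\epsilon)S_{m-1}+\widehat C\gamma W_m$. Inserting the induction hypothesis for $S_{m-1}$ raises every exponent by one and gives exactly the stated bound; the case $m=1$ is the plain lemma with $g=0$ and no coupling term. The remaining work is bookkeeping, namely enlarging $\widehat C$ once so that it absorbs $C$, the bound on $|c_{n,j}|^{-1}$ and the trace constant.
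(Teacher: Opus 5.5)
Your proposal is correct and follows the paper's proof essentially step by step. You apply Lemma~\ref{lemma:paper1-lemma3.2} with $\kappa=-i\beta_n$ to each equation of \eqref{eq:pnj-block}, use the rescaled embedding \eqref{eq:sobolev-embedding} for $p_{n,k}(\tr)$, and use the bound on $D_{k,j',n}$ with $\inf_n|\beta_n|>0$ to get the triangular recursion, which you close by induction on $m$ (the paper phrases this as the abstract estimate \eqref{eq:recurion-for-sum}) with the same $\widehat C$ in the prefactor and in $\epsilon$.
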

\begin{proof}
\emph{Step~1:} 
We observe that the numbers $|c_{n,j}|^{-1}$ are uniformly bounded by Lemma~\ref{lemma:control-cnj} so that 
their impact may be absorbed in generic constants. We also note that $|{\beta}_n | \sim n$ so that we may assume 
$\inf_{n} |{\beta}_n| > 0$ (cf.\ Corollary~\ref{cor:basis}\ref{item:cor-basis-vi}). 
Among others, this implies 
\begin{equation}
\label{eq:estimate-D}
|D_{k,j',n}| \leq C |{\beta}_n|^{-1} , 
\qquad k =  j'+1,\ldots,J_n. 
\end{equation}
\emph{Step~2:} Let $\{a_j\}_{j \in \doubleIN}$, $\{b_j\}_{j \in \doubleIN} \subset \doubleIR_{\ge 0}$, and 
set $S_m:=\sum_{j=1}^m a_j$. Assume that for some $G \ge 0$ there holds $a_{j+1} \leq G (S_j + b_{j+1})$ for all $j$.
Then, by induction on $m$ one has  
\begin{align}
\label{eq:recurion-for-sum}
S_{m+1} \leq (1+G)^m S_1 + \sum_{j=1}^m G b_{j+1} (1+G)^{m-j}\qquad  \forall m \in \doubleIN_0.
\end{align}
\emph{Step 3:}
Using \eqref{eq:sobolev-embedding-reference} and a scaling argument, one arrives at the Sobolev embedding theorem
\begin{align}
\label{eq:sobolev-embedding}
|{\beta}_n|^{1/2} \|v\|_{L^\infty(0,\tr)} \leq C \|v\|_{1,|{\beta}_n|} \qquad \forall v \in H^1(0,\tr). 
\end{align}
\emph{Step 4:} From Lemma~\ref{lemma:paper1-lemma3.2} with $\kappa = - i {\beta}_n$ and $L = \tr$ and the embedding 
\eqref{eq:sobolev-embedding}, we get for $j'=J_n,J_n-1,\ldots,1$
\begin{align*}
\|p_{n,j'}\|_{1,|{\beta}_n|} &\lesssim \gamma_{-i \widehat{\beta}_n \tr} \left[ 
 \underbrace{|{\beta}_n|^{-1} \|w_{1,n,j'}\|_{L^2} + \|w_{2,n,j'}\|_{L^2}}_{=:w_{j'}}  + |{\beta}_n|^{-1} \|p_{n,j'+1}\|_{1,|{\beta}_n|} 
+ \sum_{k=j'+1}^{J_n} \frac{|D_{k,j',n}| }{{|{\beta}_n|}^{1/2}} \|p_{n,k}\|_{1,|{\beta}_n|} 
\right],
\end{align*}
where we used the standard convention that an empty sum is zero and our setting $p_{n,J_n+1} = 0$. 
In view of \eqref{eq:estimate-D} and $|{\beta}_n| \sim n$, this simplifies to 
\begin{align*}
\|p_{n,j'}\|_{1,|{\beta}_n|} &\leq \widehat{C}  \gamma_{-i {\beta}_n L} |{\beta}_n|^{-1} \left[ 
|{\beta}_n| w_{j'} + \sum_{k=j'+1}^{J_n} \|p_{n,k}\|_{1,|{\beta}_n|}
\right]. 
\end{align*}
We set $a_j:= \|p_{n,J_n+1-j}\|_{1,|{\beta}_n|}$, $b_j:= |{\beta}_n| w_{J_n+1-j}$, 
$G:= \widehat{C} \gamma_{-i {\beta}_n L} |{\beta}_n|^{-1}$ 
and apply \eqref{eq:recurion-for-sum} to get 
\begin{align*}
\sum_{j=1}^m \|p_{n,J_n+1-j}\|_{1,|{\beta}_n|} &= 
\sum_{j=1}^{m} a_j  = S_{m} \leq (1 + G)^{m-1} a_1 + \sum_{j=1}^{m-1} G b_{j+1} (1 + G)^{m-1-j}  \\
&\leq (1 + G)^{m-1} \|p_{n,J_n}\|_{1,|{\beta}_n|} + \widehat{C} \sum_{j=1}^{m-1} (1 + G)^{m-1-j} \gamma_{-i{\beta}_n L} w_{J_n-j} \\
& \leq \widehat{C} \sum_{j=0}^{m-1} (1+G)^{m-1-j} \gamma_{-i{\beta}_n L} w_{J_n-j} 
= \widehat{C} \gamma_{- {\beta}_n L} \sum_{j=1}^{m} (1+G)^{m-j} w_{J_n+1-j}. 
\end{align*}
The proof is complete. 
\end{proof}
Combining the results for the different Jordan blocks, we obtain 
\begin{theorem}
\label{thm:stability-circular-with-Jordan}
Fix  $r_1$, $r_2$, $\omega$, $\rfi$, $\impedance$, $c_0>0$.
Assume $\tr > c_0$. 
Let $\{-{\beta}_n^2\}_\ninn$ with $\operatorname{Im} {\beta}_n > 0$ be the eigenvalues of the operator $\Ar$ of \eqref{eq:Ar} and 
$J_n$ be the length of the corresponding Jordan chain. 
Define for the constant $\gamma_{\kappa}$ of \eqref{eq:gamma_kappa} and the constant $\widehat{C}$ of  Lemma~\ref{lemma:backsubstitution-Jordan-block} 
\begin{equation}
\label{eq:thm:stability-circular-with-Jordan-10}
\gamma_{\operatorname{max}}:= \sup_{n \in \doubleIN} \widehat{C} \gamma_{-i{\beta}_n \tr} 
\left(1 + \frac{\widehat{C} \gamma_{-i{\beta}_n \tr}}{|{\beta}_n|}\right)^{J_n-1}. 
\end{equation}
The solution $u$ of the circular waveguide problem \eqref{eq:Helmholtz-polar} with data 
$f \in (\Hdir)'$ satisfies 
\begin{equation}
\label{eq:thm:stability-circular-with-Jordan-20}
\Vert u \Vert_{H^1(\Omega)}  \leq C \gamma_{\operatorname{max}}  \|f\|_{(\Hdir)'} 
\end{equation}
for a constant $C>0$ that does not depend on $f$, $\tr$, but may depend upon $r_1$, $r_2$, $\omega$, $\rfi$, $\impedance$, and $c_0$. 
The constant $\gamma_{\operatorname{max}}$ satisfies with $J:= \max_{n} J_n$
\begin{equation}
\label{eq:thm:stability-circular-with-Jordan-30}
\gamma_{\operatorname{max}} \leq C (2+\min\{C_\beta, C^\prime_\beta \tr\})^J 
\end{equation}
with $C_\beta$, $C^\prime_\beta$ from Lemma~\ref{lemma:control-Imbeta_n} and a $C > 0$ depending on $\Ar$. 
\end{theorem}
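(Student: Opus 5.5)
The argument mirrors the proof of Theorem~\ref{thm:stability-circular}, with Lemma~\ref{lemma:backsubstitution-Jordan-block} taking the place of \eqref{eq:Markus_estimate}. Since $\|f\|_{(\Hdir)'}\sim\|w\|_{H^1(\Omega)}$, it suffices to bound $\|u\|_{H^1(\Omega)}\lesssim\gamma_{\operatorname{max}}\|w\|_{H^1(\Omega)}$.

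First, expand $u=\sum_n\sum_{j=1}^{J_n}p_{n,j}\phi_{n,j}$. By Corollary~\ref{cor:basis}\ref{item:cor-basis-iv}--\ref{item:cor-basis-vii}, the root vectors form Riesz bases of $L^2$ and, after normalization, of $H^1$, and $\|\phi_{n,j}\|_{H^1}\sim|\beta_n|\|\phi_{n,j}\|_{L^2}$. Normalize $\|\phi_{n,j}\|_{r^{-1}}\sim1$; this is possible because only finitely many chains are nontrivial. As in \eqref{eq:thm:stabilty-circular-10}, these facts give
\[
\|u\|^2_{H^1(\Omega)}\lesssim\sum_n\sum_j\|p_{n,j}\|^2_{1,|\beta_n|}.
\]
The condition $\tr|\beta_n|\ge c_0\inf_n|\beta_n|>0$ needed in Lemma~\ref{lemma:backsubstitution-Jordan-block} follows from $\tr>c_0$ together with Corollary~\ref{cor:basis}\ref{item:cor-basis-vi}; the constant is adjusted accordingly. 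Applying Lemma~\ref{lemma:backsubstitution-Jordan-block} with $m=J_n$ and bounding every factor $\widehat C\gamma(1+\widehat C\gamma/|\beta_n|)^{J_n-j}$ by $\gamma_{\operatorname{max}}$ yields
\[
\sum_j\|p_{n,j}\|^2_{1,|\beta_n|}\lesssim J_n\,\gamma_{\operatorname{max}}^2\sum_j\Bigl(|\beta_n|^{-2}\|w_{1,n,j}\|^2_{L^2}+\|w_{2,n,j}\|^2_{L^2}\Bigr).
\]
The factor $J_n\le J$ is uniformly bounded.

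Second, and this is the main step, sum the data terms over $n,j$. Using Fubini in $\theta$, it suffices to show for a.e.\ $\theta$ that
\[
\sum_{n,j}|\beta_n|^{-2}\bigl|(w,\chi_{n,j})_{H^1(I)}\bigr|^2\lesssim\|w(\cdot,\theta)\|^2_{H^1(I)}
\quad\text{and}\quad
\sum_{n,j}\bigl|(\partial_\theta w,\chi_{n,j})_{L^2(I)}\bigr|^2\lesssim\|\partial_\theta w\|^2_{L^2(I)}.
\]
Note that $w_{1,n,j'}=(w,\chi_{n,J_n+1-j'})_{H^1(I)}$, so the sums reindex trivially. Both bounds are Bessel-type inequalities, obtained exactly as in Step~3 of Theorem~\ref{thm:stability-circular}. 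The root vectors $\chi_{n,j}$ of $\Ar^\star$ form Riesz bases by Corollary~\ref{cor:basis}\ref{item:cor-basis-viii}. Hence $\{\chi_{n,j}\}$ is a Riesz basis of $L^2$, and $\{\chi_{n,j}/|\beta_n|\}$ (equivalently, $H^1$-normalized) is a Riesz basis of $H^1$. Coefficients against a Riesz basis, paired through the Hilbert inner product, are square-summable with bound equal to the norm, since they are the expansion coefficients in the biorthogonal Riesz basis. Integrating over $\theta$ gives $\lesssim\gamma_{\operatorname{max}}^2\|w\|^2_{H^1(\Omega)}$. Here I expect the only care to be needed: the Jordan-index pairing is handled by the reindexing above, and biorthogonality is not needed for Bessel.

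Finally, for \eqref{eq:thm:stability-circular-with-Jordan-30}, Lemma~\ref{lemma:control-Imbeta_n}\ref{item:lemma:control-Imbeta_n-iii} gives $\gamma_{-i\beta_n\tr}\le2+\min\{C_\beta,C'_\beta\tr\}=:\Gamma$. Since $\inf|\beta_n|>0$, we have $1+\widehat C\gamma/|\beta_n|\lesssim\Gamma$, because $\Gamma\ge2$. Therefore each term in \eqref{eq:thm:stability-circular-with-Jordan-10} is $\lesssim\Gamma\cdot\Gamma^{J_n-1}\le\Gamma^J$. For $n\ge N_0$ we have $J_n=1$ by Corollary~\ref{cor:basis}\ref{item:cor-basis-iii}, so the supremum is effectively over finitely many exponents and the constant is uniform.
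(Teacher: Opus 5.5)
Your proposal is correct and follows the paper's route. The paper's own proof just says to repeat the argument of Theorem~\ref{thm:stability-circular} chain by chain, with Lemma~\ref{lemma:backsubstitution-Jordan-block} replacing \eqref{eq:Markus_estimate}, and to read off \eqref{eq:thm:stability-circular-with-Jordan-30} from Lemma~\ref{lemma:control-Imbeta_n}, which is exactly what you carry out; your only deviation is cosmetic, namely bounding the data sums directly by the Bessel property of the Riesz bases $\{\chi_{n,j}\}$ rather than expanding $r\partial_\theta w$ in $\{\phi_n\}$ via biorthogonality.
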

\begin{proof}
The proof of \eqref{eq:thm:stability-circular-with-Jordan-20}
consists of combining the contributions from each Jordan chain. It proceeds 
in exactly the same way as the proof of Theorem~\ref{thm:stability-circular} and is therefore omitted. 
The bound \eqref{eq:thm:stability-circular-with-Jordan-30} follows from Lemma~\ref{lemma:control-Imbeta_n}. 
\end{proof}
\begin{remark}[on the bound \eqref{eq:thm:stability-circular-with-Jordan-30}]
\label{rem:constbound}
The form of the bound \eqref{eq:thm:stability-circular-with-Jordan-30} is discussed in Remark~\ref{rem:control-Imbeta_n}: 
In practice, \cite{Mora-Paz_Demkowicz_Taylor_Grosek_Henneking_25}, the constant $C_\beta$ is large so that 
the expression $2 + C^\prime_\beta \tr $ better describes the stability properties. In typical applications, 
we expect $J = 1$ so that the result of Theorem~\ref{thm:stability-circular-with-Jordan} reverts to the statement of 
Theorem~\ref{thm:stability-circular}. 
\eremk
\end{remark}

\section{Stability analysis including an interior heterogeneous domain}\label{section:interior}

In applications a short, initial part of the fiber is usually connected to external devices, which destroys the homogeneous structure of the waveguide.
Hence in this case our analysis from Section~\ref{section:analysis_with_Jordan_chains} is no longer applicable.
To model this setting we consider an extended domain
\begin{align*}
\Omega&:=(r_1,r_2)\times (\theta_0,\tr),\qquad
\Omega_0:=(r_1,r_2)\times (\theta_0,0),\qquad
\Omega_{\tr}:=(r_1,r_2)\times (0,\tr),\\
\Gamma_\mathrm{imp}&:=\{r_2\}\times (\theta_{0},\tr)\},\qquad
\Gamma_\mathrm{in}:=(r_1,r_2)\times \{\theta_0\},
\end{align*}
with $\theta_0<0$, i.e., we attach $\Omega_0$ to our previous geometry $\Omega_{\tr}$ (see Fig.~\ref{fig:waveguide}).
The material coefficients $\rfi\in L^\infty(\Omega)$, $\impedance\in L^\infty(\Gamma_\mathrm{imp})$ are allowed to depend on $\theta$ in $\Omega_0$, but to remain independent of $\theta$ in $\Omega_{\tr}$.
In particular, we consider the variational form
\begin{align}
\label{eq:ses-het}
\ap(\af,\tf):=\innerprod{ r\partial_r \af,\partial_r \tf }_{\Omega}
+\innerprod{ r^{-1}\partial_\theta \af,\partial_\theta \tf }_{\Omega}
-\omega^2 \innerprod{ r\rfi \af, \tf }_{\Omega}
+i\omega r_2 \innerprod{\impedance \af, \tf }_{\Gamma_{imp}}
-\innerprod{ \DtN \af, \tf }_{\Gamma_{out}},
\end{align}
with $\af,\tf \in \Hdir$
and the problem to find $\af\in\Hdir$ such that
\begin{align}\label{eq:a-varprob}
\ap(\af,\tf)=\innerprod{rf,\tf}_{\Omega} \quad\text{for all }\tf\in\Hdir,
\end{align}
with given $f\in (\Hdir)'$.
Note that this is just an exemplary setting and our forthcoming analysis can be easily adapted to other kinds of interior heterogeneities, e.g., deformations of the interior domain.
Let
\begin{align*}
X_{\tr}:=\{\af\in\Hdir\colon \af|_{\Omega_0}=0\}
\end{align*}
and $P_{X_{\tr}}\in L(\Hdir,X_{\tr})$ be defined by
\begin{align*}
\ap(P_{X_{\tr}}\af-\af,\tf)=0 \quad\text{for all}\quad \tf\in X_{\tr}.
\end{align*}
Note that the former projection is well-defined due to Section~\ref{section:analysis_with_Jordan_chains}.
Then with
\begin{align*}
X_0:=\{(I-P_{X_{\tr}})\af\colon\af\in\Hdir\}
\end{align*}
we have the topological decomposition
\begin{align*}
\Hdir=X_0 \oplus^\mathcal{T} X_{\tr}.
\end{align*}
The allure the this decomposition is that the form $\ap(\cdot,\cdot)$ becomes block triangular:
\begin{align}\label{eq:block}
\begin{aligned}
\ap(\af_0+\af_{\tr},\tf_0+\tf_{\tr})=
\ap(\af_0,\tf_0)+\ap(\af_{\tr},\tf_{\tr})&+\ap(\af_{\tr},\tf_0)\\
&\forall \af_0,\tf_0\in X_0, \af_{\tr},\tf_{\tr}\in X_{\tr}.
\end{aligned}
\end{align}
\begin{theorem}\label{thm:interior}
Fix $r_1,r_2,\omega,c_0>0$ and $\impedance\in L^\infty(\Gamma_\mathrm{imp})$, $\rfi\in L^\infty(\Omega)$ such that $\impedance|_{\{r_2\}\times (0,\tr)}>0$ is constant and $\rfi|_{\Omega_{\tr}}$ is independent of $\theta$.
Assume that problem \eqref{eq:a-varprob} with $\tr=0$ is well-posed.
Then, there exists a constant $C>0$ such that the solution $\af$ to \eqref{eq:a-varprob} satisfies
\begin{align*}
\|\af\|_{H^1(\Omega)} \leq C\gamma_\mathrm{max} \|f\|_{(\Hdir)'}
\end{align*}
for all $\tr>c_0$, $f\in(\Hdir)'$, where $\gamma_\mathrm{max}$ is as in \eqref{eq:thm:stability-circular-with-Jordan-10}.
\end{theorem}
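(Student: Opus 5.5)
The plan is to exploit the block-triangular structure \eqref{eq:block}. Given $f$, write $\af=\af_0+\af_{\tr}$ with $\af_0\in X_0$ and $\af_{\tr}\in X_{\tr}$. Testing \eqref{eq:a-varprob} with $\tf_{\tr}\in X_{\tr}$ and using \eqref{eq:block} gives
\[
\ap(\af_{\tr},\tf_{\tr})=\innerprod{rf,\tf_{\tr}}_\Omega \quad \forall \tf_{\tr}\in X_{\tr}.
\]
Testing with $\tf_0\in X_0$ gives
\[
\ap(\af_0,\tf_0)=\innerprod{rf,\tf_0}_\Omega-\ap(\af_{\tr},\tf_0).
\]
So $\af_{\tr}$ is found first and $\af_0$ second. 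The stability bound follows once we control (a) the problem on $X_{\tr}$, (b) the problem on $X_0$, and (c) the coupling term and the projection $P_{X_{\tr}}$, all with constants that are independent of $\tr$ except for the factor $\gamma_{\max}$.

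\emph{Step (a).} Functions in $X_{\tr}$ vanish on $\Omega_0$, so $X_{\tr}$ is isometric to $H^1_{\Gamma_{\rm in}}(\Omega_{\tr})$ with the Dirichlet condition at $\theta=0$. On $X_{\tr}$ the form $\ap$ coincides with the homogeneous-waveguide form of Section~\ref{section:analysis_with_Jordan_chains}. Theorem~\ref{thm:stability-circular-with-Jordan} therefore yields
\[
\|\af_{\tr}\|_{H^1}\le C\gamma_{\max}\|f\|_{(\Hdir)'}.
\]
Restricting a functional to $X_{\tr}$ does not increase its dual norm. The same theorem also shows that $P_{X_{\tr}}$ is well defined with $\|P_{X_{\tr}}\|\lesssim\gamma_{\max}$.

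\emph{Step (b).} I will identify $X_0$ with $H^1_{\Gamma_{\rm in}}(\Omega_0)$ via the trace-extension structure. Let $\tf_0=(I-P_{X_{\tr}})\tf$. Then $\tf_0|_{\Omega_0}=\tf|_{\Omega_0}$, and on $\Omega_{\tr}$ the function $\tf_0$ is the $\ap$-"harmonic" extension of its trace $g$ on $\{\theta=0\}$. By the modal analysis, this extension is exactly the outgoing extension: on $\Omega_{\tr}$ it satisfies the homogeneous PDE with the transparent condition at $\tr$. Its coefficients $p_{n,j}$ are the outgoing solutions on $(0,\tr)$ with prescribed value at $0$.

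Since this extension is outgoing with a transparent end, Green's formula on $\Omega_{\tr}$ reduces $\ap(\af_0,\tf_0)$ to the form on $\Omega_0$ plus $-\innerprod{\DtN\af_0,\tf_0}_{\{\theta=0\}}$. The latter is the DtN operator \eqref{eq:dtn-operator-jordan-chains} now placed at $\theta=0$. Hence $\ap|_{X_0\times X_0}$ is precisely the form of problem \eqref{eq:a-varprob} with $\tr=0$, which is well-posed by assumption. Its inf-sup constant is independent of $\tr$.

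One more ingredient is needed: the norm equivalence $\|\tf_0\|_{H^1(\Omega)}\sim\|\tf_0\|_{H^1(\Omega_0)}$ uniformly in $\tr>c_0$. This requires bounding the outgoing extension in $H^1(\Omega_{\tr})$ by the $H^{1/2}$ norm of its trace. The modal bound is $\sum |p_{n,j}(0)|^2|\beta_n|\,\|\phi_{n,j}\|^2_{L^2}$. Because $\Im\beta_n>0$ with $\inf|\beta_n|>0$, each mode decays along $\theta$, and the $L^2(0,\infty)$ norm of $e^{i\beta_n\theta}$ (times polynomial factors for Jordan chains) is bounded independently of $\tr$. I then use the Riesz basis properties of Corollary~\ref{cor:basis} together with Lemma~\ref{lemma:mapping-properties-DtN}.

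\emph{Step (c).} The coupling term satisfies $|\ap(\af_{\tr},\tf_0)|\lesssim\|\af_{\tr}\|_{H^1}\|\tf_0\|_{H^1}$, with $\ap$ bounded uniformly in $\tr$; this uses Lemma~\ref{lemma:mapping-properties-DtN} for the DtN term. Combining (a)–(c) gives
\[
\|\af_0\|\lesssim \|f\|+\|\af_{\tr}\|\lesssim\gamma_{\max}\|f\|,
\]
and therefore $\|\af\|\lesssim\gamma_{\max}\|f\|$.

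The main obstacle is Step (b). I must verify rigorously that $\ap$ restricted to $X_0$ is exactly the $\tr=0$ form, including the boundary term at $\Gamma_{\rm out}$, and that the harmonic extension is bounded uniformly in $\tr$. For the latter, I would solve the one-dimensional problems for $p_{n,j}$ explicitly as decaying exponentials. Where Jordan chains occur, the polynomially weighted exponentials from Section~\ref{sec:dtn-operator} appear instead; they arise only for finitely many $n$, so they can be bounded crudely.
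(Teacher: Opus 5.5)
Your argument is correct and has the same skeleton as the paper's proof. Both use the splitting $\Hdir=X_0\oplus X_{\tr}$ with the block-triangular structure \eqref{eq:block}. Both handle the $X_{\tr}$ block with Theorem~\ref{thm:stability-circular-with-Jordan} via extension by zero. Both use integration by parts plus the transparent condition to identify $\ap|_{X_0\times X_0}$ with the form of the $\tr=0$ problem.

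You differ in how you control the lifting on $\Omega_{\tr}$.

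\emph{The paper's route.} It regards $\af_0|_{\Omega_{\tr}}$ as the solution of the $\Omega_{\tr}$ problem with Dirichlet data at $\theta=0$. It then appeals to Section~\ref{section:analysis_with_Jordan_chains}, which gives $\|\af_0\|_{H^1(\Omega_{\tr})}\le C\gamma_{\max}\|\af_0\|_{H^1(\Omega_0)}$. This is short and uses the earlier theorem as a black box.

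\emph{Your route.} You identify the lifting with the restriction of the outgoing solution on the semi-infinite guide and use exponential decay of the modes. This yields a $\tr$-independent bound. It costs an explicit modal computation, but it gives a $\tr$-uniform norm equivalence on $X_0$. Hence the inf-sup constant of the $X_0$ block in the $H^1(\Omega)$ norm is also $\tr$-uniform.

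\emph{What this buys.} It is exactly what you need to measure the right-hand side of the $X_0$ block in the $H^1(\Omega_0)$-dual norm without extra losses. The term $\innerprod{rf,\tf_0}_\Omega$ sees $\tf_0$ on all of $\Omega$. The coupling term $\ap(\af_{\tr},\tf_0)$, which the paper's sketch leaves implicit, involves $\tf_0$ only on $\Omega_{\tr}$. With only the $C\gamma_{\max}$ lifting bound, straightforward bookkeeping of these two terms produces a higher power of $\gamma_{\max}$ than the stated one. So your uniform bound is also what underwrites the $\tr$-independent constant $C_0$ in the paper's step $\|\af_0\|_{H^1(\Omega_0)}\le C_0\|f\|_{(\Hdir)'}$.

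Two details to tighten when you write this out:
\begin{enumerate}
\item For a simple mode, the outgoing lifting contributes, up to constants, $|p_n(0)|^2\,|\beta_n|^2(\Im\beta_n)^{-1}\|\phi_n\|^2_{L^2}$. Uniformity in $n$ therefore needs $|\beta_n|\le C\,\Im\beta_n$ for all $n$, which follows from Lemma~\ref{lemma:control-Imbeta_n}(ii). The facts $\Im\beta_n>0$ and $\inf_n|\beta_n|>0$ alone do not suffice.
\item The identification of $\sum_n|p_n(0)|^2|\beta_n|\,\|\phi_n\|^2_{L^2}$ with $\|g\|^2_{H^{1/2}}$ follows by interpolating the $L^2$ and $H^1$ Riesz-basis equivalences of Corollary~\ref{cor:basis}. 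Note that the coefficient map is the same on both spaces.
\end{enumerate}
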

\begin{proof}
Due to \eqref{eq:block} it suffices to analyze the stability of $\ap(\cdot,\cdot)|_{X_0\times X_0}$ and $\ap(\cdot,\cdot)|_{X_{\tr}\times X_{\tr}}$.
The latter follows directly from Section~\ref{section:analysis_with_Jordan_chains} by identifying a function with its continuation by zero.
For the former we employ the definition of $X_0$ and integration by parts to obtain that
\begin{align*}
\ap(&\af_0,\tf_0)\\
&=\innerprod{ r\partial_r \af_0,\partial_r \tf_0 }_{\Omega}
+\innerprod{ r^{-1}\partial_\theta \af_0,\partial_\theta \tf_0 }_{\Omega}
-\omega^2 \innerprod{ r\rfi \af_0, \tf_0 }_{\Omega}
+i\omega r_2 \innerprod{\impedance \af_0, \tf_0 }_{\Gamma_{imp}}
-\innerprod{ \DtN \af_0, \tf_0 }_{\Gamma_{out}}\\
&=\innerprod{ r\partial_r \af_0,\partial_r \tf_0 }_{\Omega_0}
+\innerprod{ r^{-1}\partial_\theta \af_0,\partial_\theta \tf_0 }_{\Omega_0}
-\omega^2 \innerprod{ r\rfi \af_0, \tf_0 }_{\Omega_0}
+i\omega r_2 \innerprod{\impedance \af_0, \tf_0 }_{(r_1,r_2)\times(\theta_0,0)}\\
&\hspace{120mm}-\innerprod{ \DtN \af_0, \tf_0 }_{(r_1,r_2)\times\{0\}}.
\end{align*}
By assumption the problem to find $\af_0\in X_0$ such that $\ap(\af_0,\tf_0)=\innerprod{r f,\tf_0 }_{\Omega}$ for all $\tf\in X_0$ is well-posed, which implies the stability estimate $\|\af_0\|_{H^1(\Omega_0)} \leq C_0 \|f\|_{(\Hdir)'}$ with the stability constant $C_0>0$.
It remains to note that $\af_0|_{\Omega_{\tr}}$ solves the Helmholtz equation in $\Omega_{\tr}$ with Dirichlet data $\af_0|_{(r_1,r_2)\times\{0\}}$ and otherwise homogeneous data.
Thus $\|\af_0\|_{H^1(\Omega_{\tr})} \leq C\gamma_\mathrm{max} \|\af_0\|_{H^1(\Omega_0)}$.
The final result the follows from the triangle inequality.
\end{proof}

\printbibliography[heading=bibintoc]
\setcounter{section}{0}

\appendix
%
%

\section{A review of Mityagin - Siegl theory with complementary results }\label{appendix:2}
Let $H$ be a separable Hilbert space with inner product $(\cdot,\cdot)$ and corresponding norm $\Vert \cdot \Vert$.  
Let $a(\cdot,\cdot)\colon V\times V\subset H\times H\to\mathbb{R}$ be an unbounded, Hermitian positive-definite sesquilinear form with the domain space
$$
V:=\{ u \in H \colon a(u,u) < \infty \}
$$
being dense in $H$.
Due to Kato's \emph{second representation theorem} \cite[Thm.~{VI.2.23},Chap.~VI]{Kato} the form $a(\cdot,\cdot)$ induces an unbounded, selfadjoint operator
\begin{align*}
A\colon \dom(A)\subset H \to H \, ,
\end{align*}
with the relation
$$
a[u] := a(u,u) = \Vert A^{\half} u \Vert^2, \quad \dom (a) = V = \dom(A^{\half}) 
$$
We will refer to the energy inner product and the corresponding norm as follows:
$$
(u,v)_V := a(u,v) = (Au,v) \, , \qquad \Vert u \Vert_V^2 :=  (u,u)_V = \Vert A^{\half} u \Vert^2.
$$
Let $b(\cdot,\cdot)\colon \dom(B)\times \dom(B)\subset V\times V\to\mathbb{C}$ be an additional  sesquilinear form.
We consider the variational eigenproblem:
\begin{align*}
\text{Find} \quad (\lambda,u) \in \mathbb{C}\times V\setminus\{0\} \quad\text{such that}\quad a(u,v) + b(u,v) = \lambda (u,v) \quad \text{for all }v \in V \, .
\end{align*}
We make the two following critical assumptions.
\begin{assumption}[Eigenvalue separation condition]
\label{ass:ev-sep}
Let $(\mu_k, \psi_k),\, k=1,\ldots$ be the sequence of $H$-normalized eigenpairs of the operator $A$.
There exist constants
$\gamma > 0$ , $\kappa > 0$, $N_0 > 0$ such that
\begin{align}
\label{eq:separation_condition}
\mu_{k+1} - \mu_k \geq \kappa k^{\gamma - 1} \qquad \forall \, k \geq N_0 \, .
\end{align}
\end{assumption}
Note that Assumption~\ref{ass:ev-sep} implies that, asymptotically, the eigenvalues $\mu_k$ are simple.
\begin{assumption}[Local form-subordination condition]
\label{ass:local-subord}
There exist a constant $\alpha$ with $2 \alpha + \gamma > 1$ and a constant $M_b>0$ such that:
\begin{align}
\label{eq:subordination_condition}
\vert b(\psi_m, \psi_n) \vert \leq \frac{M_b}{m^\alpha \, n^\alpha} \qquad  \forall \, m,n \in \doubleIN \, .
\end{align}
\end{assumption}
\mhend%
%
%
\cite[Lem.~{4.2}]{Mityagin_Siegl_19} shows that the form $b(\cdot,\cdot)$ is $p$-subordinate to the form $a(\cdot,\cdot)$ with some $p = p(\alpha,\gamma) \in [0,1)$, i.e., 
\be
\vert b[u] \vert \leq C a[u]^p  \, \Vert u \Vert^{2(1-p)} .
\label{eq:b-subordinated-to-a}
\ee
Hence, by (generalized) Young's inequality, $b(\cdot,\cdot)$ is {\em relatively bounded with respect to $a(\cdot,\cdot)$ with the bound $0$} (see \cite[Chap.~{VI.1.6}]{Kato}), i.e.,
$$
\forall \, \epsilon>0 \quad \exists\, C_\epsilon \geq 0 \colon \quad \vert b[u] \vert \leq \epsilon a[u] + C_\epsilon \Vert u \Vert^2 \, .
$$
This implies that the form
$
t(\cdot,\cdot) := a(\cdot,\cdot) + b(\cdot,\cdot)
$
is sectorial and closed and it determines uniquely the corresponding $m$-sectorial operator $T$ with compact resolvent, see \cite[Thm.~{VI.3.4}]{Kato}. 
 
 \paragraph*{An Alternative Characterization of the Operator \boldmath{$T$}.}
 We introduce the operators
 $$
 K(z) u := \sum_{k\in \doubleIN} (z - \mu_k)^{-\half} (u, \psi_k) \psi_k \qquad (  = (- R_z(A)) ^\half = i R_z(A)^\half)  
 $$
 where $R_z(A)$ denotes the resolvent of $A$ at $z$.
 Above, the $s$ power of $w \in \doubleIC$ is defined as:
 $$
 w^s := \vert w \vert^s e^{ i s \arg w} \quad \text{where } \arg w \in (-\pi,\pi] \, .
 $$
 Notice that
 $$
 K(z)^2 = (z - A)^{-1},\quad z \in \rho(A) \, .
 $$
 Let $B(z)$, $z \in \rho(A)$, be the operator uniquely determined by the relation:
 $$
  (B(z) u,v ) =  b(K(z) u, K^\ast(z) v) \quad u,v \in H \, .
 $$
 We have:
 \be
 z - T = K(z)^{-1} (I - B(z)) K(z)^{-1} , \quad z \in \rho(A) \, .
 \label{eq:factorization}
 \ee
 This gives:
 \be
 (z - T)^{-1} = K(z) (I - B(z))^{-1} K(z), \quad z \in \rho(A)\, ,
 \label{eq:resolvent_factorization}
 \ee
 provided $I - B(z)$ is invertible.

 \paragraph*{Representation formulas for the eigenvectors.}
 
 Let $\{(\lambda_n, \phi_n)\}_\ninn$ denote the eigenpairs of operator $T$.
 Mityagin and Siegl show that, for sufficiently large $n$, the eigenvalues $\lambda_n$ are simple (of algebraic multiplicity equal one) and the corresponding eigenvectors $\phi_n$ are obtained
 by the Riesz projections:
 \be
 \begin{array}{lll}
 \phi_n 
 & \ds = \frac{1}{2 \pi i} \int_{\Gamma_n} (z - T)^{-1} \psi_n \, dz \\[12pt]
 & \ds = \psi_n + \frac{1}{2 \pi i} \int_{\Gamma_n } ((z - T)^{-1} - (z - A)^{-1} ) \psi_n \, dz \, .
 \end{array}
 \label{eq:3.13}
 \ee
 where $\Gamma_n = \ptl \Pi_n$ is a counterclockwise contour around the eigenvalue $\mu_n$ with 
 $$
 \Pi_n  := \{ z \in \doubleIC \, : \, \mu_n - \frac{\kappa}{2}(n-1)^{\gamma-1} < \Re z < \mu_n + \frac{\kappa}{2} n^{\gamma-1} , \quad \vert \Im z \vert < \frac{\kappa}{2} n^{\gamma-1} \} \, .
 $$
 Mityagin and Siegl also show that $\Vert B(z) \Vert < 1$ on $\Gamma_n$.
 Hence,
 $$
 \begin{array}{lll}
 \phi_n  - \psi_n 
 & \ds = \frac{1}{2 \pi i} \int_{\Gamma_n } ((z - T)^{-1} - (z - A)^{-1} ) \, \psi_n \, dz \\[12pt]
 &\ds = \frac{1}{2 \pi i} \int_{\Gamma_n } K(z) \left( \sum_{m=1}^\infty B^m(z) \right) K(z) \, \psi_n\, dz  \, .
 \end{array}
 $$
 To express the perturbation $\phi_n  - \psi_n$ we define the vectors
 $$
 \phi_n^{(k)} := \frac{1}{2 \pi i} \int_{\Gamma_n } K(z) B^k(z) K(z) \, \psi_n \, dz  \, .
 $$
\subsection{Auxiliary estimates}
 
 Let $n \in \doubleIN, \gamma >0$, and $\omega + \gamma > 1$.  Define:
 $$
 \sigma_{\omega,\gamma}(n) := 
 \left\{
 \begin{array}{ll}
 n^{- \omega - \gamma+1} \ln (en) \quad & \omega \leq 1 \, , \\[5pt]
 n^{- \gamma} & \omega >1 \, .
 \end{array}
 \right.
 $$
 We start with a simple consequence of \cite[Lem.~{4.1} and {4.3}]{Mityagin_Siegl_19}.
\begin{lemma}
\label{lemma:4.5}%
Let $\omega + \gamma >1$.
The following estimate holds:
$$
\sum_{m=1,m\ne n}^\infty \frac{1}{m^{\omega}} \frac{1}{ \vert \mu_m - \mu_n \vert} = O(\sigma_{\omega, \gamma}(n)) \, .
$$
\end{lemma}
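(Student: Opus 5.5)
The plan is a split-and-compare argument, in the spirit of \cite[Lem.~4.1, 4.3]{Mityagin_Siegl_19}. The only structural input is the separation condition \eqref{eq:separation_condition}. Summing it gives, for $m>n\ge N_0$,
\begin{equation*}
\mu_m-\mu_n \ge \kappa\sum_{k=n}^{m-1}k^{\gamma-1} \gtrsim
\begin{cases} (m-n)\,m^{\gamma-1}, & \gamma\ge 1,\\ (m-n)\,n^{\gamma-1}, & \gamma<1,\end{cases}
\end{equation*}
and symmetrically for $m<n$. Equivalently, $|\mu_m-\mu_n|\gtrsim |m^\gamma-n^\gamma|$ for $m,n\ge N_0$, $m\ne n$. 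The finitely many indices $m<N_0$, or $n<N_0$, are handled separately. Since the $\mu_k$ are eventually strictly increasing and $\mu_n\gtrsim n^\gamma$, for large $n$ one has $|\mu_m-\mu_n|\gtrsim n^\gamma$ when $m<N_0$. These terms therefore contribute $O(n^{-\gamma})$, which is $O(\sigma_{\omega,\gamma}(n))$ in both regimes, because $-\omega-\gamma+1\ge-\gamma$ when $\omega\le 1$. Finitely many $n$ only change constants, provided each single sum is finite. Finiteness follows from the tail estimate below, since all terms are finite because $\mu_m\ne\mu_n$ for $m\ne n$ eventually; repeated $\mu$'s would only occur at small indices and are excluded by the convention $m\ne n$ together with simple spectrum.

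Next I split the remaining sum over $m\ge N_0$ into three ranges.

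\textbf{Range $m\le n/2$.} Here $|\mu_m-\mu_n|\gtrsim n^\gamma$, so the contribution is $\lesssim n^{-\gamma}\sum_{m\le n/2}m^{-\omega}$. This is $n^{-\gamma}$ if $\omega>1$, $n^{-\gamma}\ln(en)$ if $\omega=1$, and $n^{1-\omega-\gamma}$ if $\omega<1$. In every case it is $O(\sigma_{\omega,\gamma}(n))$.

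\textbf{Range $m\ge 2n$.} Here $|\mu_m-\mu_n|\gtrsim m^\gamma$, so the contribution is $\lesssim\sum_{m\ge 2n}m^{-\omega-\gamma}\lesssim n^{1-\omega-\gamma}$. The sum converges because $\omega+\gamma>1$. For $\omega>1$ this is $\le n^{-\gamma}$; for $\omega\le1$ it is at most $\sigma_{\omega,\gamma}(n)$.

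\textbf{Near-diagonal range $n/2<m<2n$, $m\ne n$.} Here $m^{-\omega}\sim n^{-\omega}$ and $|m^\gamma-n^\gamma|\sim n^{\gamma-1}|m-n|$ by the mean value theorem. The contribution is therefore $\lesssim n^{-\omega-\gamma+1}\sum_{1\le|m-n|<n}|m-n|^{-1}\lesssim n^{1-\omega-\gamma}\ln(en)$.

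For $\omega\le1$ the near-diagonal bound is exactly $\sigma_{\omega,\gamma}(n)$. For $\omega>1$ I need $n^{1-\omega}\ln(en)\lesssim 1$, which holds since $\omega>1$. Summing the three ranges and the finitely many exceptional indices gives the claim.

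The main obstacle is the near-diagonal range, which is the only place the logarithm appears. The key point there is to turn the one-step gap condition into the two-sided bound $|\mu_m-\mu_n|\gtrsim n^{\gamma-1}|m-n|$ uniformly for $m\sim n$. This must be checked separately for $\gamma\ge1$ and $\gamma<1$, since the monotonicity of $k^{\gamma-1}$ flips. Everything else is routine summation.
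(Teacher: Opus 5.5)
Your argument is correct and is essentially the paper's proof. Both split off the finitely many indices $m<N_0$, which contribute $O(n^{-\gamma})$. Both then sum the gap condition to obtain $|\mu_m-\mu_n|\gtrsim|m^\gamma-n^\gamma|$ for $m,n\ge N_0$, and bound the model sum $\sum_{m\ne n}m^{-\omega}|m^\gamma-n^\gamma|^{-1}$. The only difference is that the paper cites \cite[Lem.~4.3]{Mityagin_Siegl_19} for the model sum, whereas you reprove it with the three-range split (the same split the paper uses for Lemma~\ref{lemma:4.3a}).

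One slip to fix: for $\gamma<1$ your displayed bound $(m-n)\,n^{\gamma-1}$ is false when $m\gg n$. With $n=N_0$ fixed, $\sum_{k=n}^{m-1}k^{\gamma-1}\sim m^\gamma/\gamma$, which is much smaller than $m\,n^{\gamma-1}$. The correct bound for every $\gamma>0$ is $|\mu_m-\mu_n|\gtrsim|m-n|\max\{m,n\}^{\gamma-1}\sim|m^\gamma-n^\gamma|$, obtained by comparing the sum with the integral of $x^{\gamma-1}$ as in \cite[Lem.~4.1]{Mityagin_Siegl_19}. Since this is the form you actually use afterwards, nothing downstream breaks.

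Your appeal to a simple spectrum for small $n$ is unnecessary, and it is not implied by the assumptions. The claim is asymptotic, and for $n>N_0$ the ordering together with the gap condition already gives $\mu_m\neq\mu_n$ for all $m\ne n$.
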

\begin{proof}
We reproduce the proof of \cite[Lem.~{4.4}]{Mityagin_Siegl_19}.
We begin by splitting the sum into two parts:
$$
\sum_{m=1,m\ne n}^\infty \frac{1}{m^\omega \vert \mu_m - \mu_n \vert} = \left( \sum_{m=1,m\ne n}^{N_0}   +  \sum_{m=N_0+1,m\ne n}^{\infty }  \right)  \frac{1}{m^\omega \vert \mu_m - \mu_n \vert} \, .
$$
We can assume $n > N_0+1$.
The first part is estimated as follows.
$$
\begin{array}{lll}
\ds \sum_{m=1,m\ne n}^{N_0}  \frac{1}{m^\omega \vert \mu_m - \mu_n \vert}  
&\ds \leq \max \{ 1, N_0^{-\omega} \} \sum_{m=1,m\ne n}^{N_0}  \frac{1}{\mu_n - \mu_m}  \quad  \left(  
\begin{array}{ll} m^\omega > 1 & \omega > 0 \\[6pt] m^\omega \geq N_0^\omega & \omega < 0 
\end{array}
\right) \, ,\\[18pt]
& \ds \leq \frac{\max \{ N_0, N_0^{1-\omega} \} }{ \mu_n - \mu_{N_0} } \\[12pt]
& \ds \leq \frac{\gamma \max \{ N_0, N_0^{1-\omega} \} }{\kappa} 
\left\{ 
\begin{array}{lll}
\ds \frac{1}{(n-1)^\gamma - (N_0-1)^\gamma} & \gamma \geq 1 \\[6pt]
\ds \frac{1}{n^\gamma - N_0^\gamma} & 0 < \gamma < 1 
\end{array}
\right. \quad  (\text{\cite[Lem.~{4.1}]{Mityagin_Siegl_19}}) 
 \, , \\[15pt]
& = O(n^{- \gamma}) \, .
\end{array}
$$
Note that $O(n^{- \gamma})$ is slightly better than $O(\sigma_{\omega,\gamma}(n))$ for $\omega < 1$.
The estimate of the second part follows from \cite[Lem.~{4.1}, {4.3}]{Mityagin_Siegl_19}.
We have for  $\gamma \geq 1$:
$$
\begin{array}{llll}
\ds \sum_{m=N_0+1,m \ne n}^\infty \frac{1}{m^\omega \vert \mu_m - \mu_n \vert} 
& \ds \leq \frac{\gamma}{\kappa} \sum_{m=N_0+1,m \ne n}^\infty \frac{1}{m^\omega \vert (m-1)^\gamma  - (n-1)^\gamma  \vert} \\[12pt]
&\ds = \frac{\gamma}{\kappa} \sum_{m=N_0+1,m \ne n}^\infty \underbrace{\left( \frac{m-1}{m} \right)^\omega}_{ \leq C}  \, \frac{1}{(m-1)^\omega \vert (m-1)^\gamma  - (n-1)^\gamma  \vert} \\[12pt]
& \ds = O(\sigma_{\omega,\gamma}(n)) \, .
\end{array}
$$
Similarly, for  $0 < \gamma < 1$:
$$
\sum_{m=N_0+1,m \ne n}^\infty \frac{1}{m^\omega \vert \mu_m - \mu_n \vert} 
\leq \frac{\gamma}{\kappa} \sum_{m=N_0+1,m \ne n}^\infty \frac{1}{m^\omega \vert m^\gamma  - n^\gamma  \vert} 
= O(\sigma_{\omega,\gamma}(n)) \, .
$$
\end{proof}
The following (new) lemma is an analogue of \cite[Lem.~{4.3}]{Mityagin_Siegl_19}.
\begin{lemma}
\label{lemma:4.3a}%
Let $n \in \doubleIN, \gamma >0$, and
$\omega+2\gamma>1$. Then
\be
\sum_{k=1,k\neq n}^\infty \frac{1}{k^\omega \vert k^\gamma - n^\gamma \vert^2} = O(\tau_{\omega,\gamma}(n))\, ,
\label{eq:lemma:4.3_1}
\ee
where
\be
\tau_{\omega,\gamma}(n) := 
\left\{
\begin{array}{ll}
n^{- (\omega + 2\gamma-2)} \quad & \omega \leq  2 \, , \\[5pt]
n^{- 2\gamma} & \omega >2 \, .
\end{array}
\right.
\label{eq:lemma:4.3.7}
\ee
\end{lemma}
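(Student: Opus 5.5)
We estimate the sum by splitting the index range into three regions relative to $n$: a far-below region $1\le k\le n/2$, a near region $n/2<k<2n$, $k\neq n$, and a far-above region $k\ge 2n$. Throughout, the basic tool is the elementary mean-value bound
\begin{align*}
|k^\gamma-n^\gamma| \gtrsim \max\{k,n\}^{\gamma-1}|k-n|,
\end{align*}
valid uniformly for $\gamma>0$ (for $\gamma\ge1$ one may even use $\max\{k,n\}^{\gamma-1}$, for $\gamma<1$ one uses $\max\{k,n\}^{\gamma-1}$ as the smaller derivative value). It is the same device underlying \cite[Lem.~4.1, 4.3]{Mityagin_Siegl_19} and Lemma~\ref{lemma:4.5}, now applied to the squared denominator.

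\emph{Far-below region} $k\le n/2$: here $|k^\gamma-n^\gamma|\ge (1-2^{-\gamma})n^\gamma$, so the contribution is $\lesssim n^{-2\gamma}\sum_{k\le n/2}k^{-\omega}$. For $\omega>1$ this gives $n^{-2\gamma}$, which is at most $\tau_{\omega,\gamma}(n)$ in both cases; for $\omega<1$ it gives $n^{1-\omega-2\gamma}\le n^{2-\omega-2\gamma}$; for $\omega=1$ a factor $\ln n\lesssim n$ appears, still within $n^{-(\omega+2\gamma-2)}$. So this region is always dominated by $\tau_{\omega,\gamma}(n)$, noting that $n^{-2\gamma}\le n^{-(\omega+2\gamma-2)}$ precisely when $\omega\le 2$.

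\emph{Near region} $n/2<k<2n$, $k\ne n$: $k\sim n$, so $k^{-\omega}\sim n^{-\omega}$ and $|k^\gamma-n^\gamma|\gtrsim n^{\gamma-1}|k-n|$. The contribution is $\lesssim n^{-\omega-2\gamma+2}\sum_{j\ge1}j^{-2}\lesssim n^{-(\omega+2\gamma-2)}$. This is the term that determines the rate for $\omega\le2$, and for $\omega>2$ it is bounded by $n^{-2\gamma}$ since then $n^{2-\omega}\le 1$. This is exactly where the exponent $\omega+2\gamma-2$ originates; note it can be negative (growth in $n$), which is harmless as the claim is just an $O$-bound.

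\emph{Far-above region} $k\ge 2n$: $|k^\gamma-n^\gamma|\ge(1-2^{-\gamma})k^\gamma$, so the contribution is $\lesssim\sum_{k\ge2n}k^{-\omega-2\gamma}$, which converges by the hypothesis $\omega+2\gamma>1$ and equals $O(n^{1-\omega-2\gamma})\le O(n^{2-\omega-2\gamma})$, and for $\omega>2$ also $\le n^{-2\gamma}$. Summing the three regions yields \eqref{eq:lemma:4.3_1}. The main (mild) obstacle is the bookkeeping of the cases $\gamma<1$ versus $\gamma\ge1$ in the mean-value bound and the boundary case $\omega=1$ in the first region; I would handle $\gamma<1$ by using the derivative lower bound $\gamma\max\{k,n\}^{\gamma-1}$, which in the near region is still $\sim n^{\gamma-1}$, so the argument goes through uniformly. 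Small $n$ (e.g.\ $n\le 2$) are covered by adjusting the constant since the series converges for each fixed $n$.
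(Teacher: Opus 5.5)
Your proof is correct and follows essentially the same route as the paper: the paper also splits the sum into $k\le[n/2]$, a near range around $n$ (with $k=n+1$ treated separately), and $k>2n$, using $n^\gamma-k^\gamma\gtrsim n^\gamma$ and $k^\gamma-n^\gamma\gtrsim k^\gamma$ in the outer ranges and identifying the near range as the source of the rate $n^{-(\omega+2\gamma-2)}$. The only difference is in the near range: the paper uses an integral comparison together with the boundedness of $(1-y)/(1-y^\gamma)$, whereas you use the mean-value bound $|k^\gamma-n^\gamma|\gtrsim n^{\gamma-1}|k-n|$ and $\sum_j j^{-2}<\infty$, which is slightly cleaner.
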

\begin{proof}
Below, $C$ denotes a generic constant that may change from line to line.  We also assume that $n$ is sufficiently large, as necessary.
We start by splitting the sum into three terms:
\begin{align}\label{eq:lemvi-aux1}
\sum_{k=1,k\ne n}^\infty \frac{1}{k^\omega \vert k^\gamma - n^\gamma \vert^2} 
= \sum_{k=1}^{n-1} \frac{1}{k^\omega \vert k^\gamma - n^\gamma \vert^2} 
+ 
\frac{1}{(n+1)^\omega \vert (n+1)^\gamma - n^\gamma \vert^2}
+ \sum_{k=n+2}^\infty \frac{1}{k^\omega \vert k^\gamma - n^\gamma \vert^2} \, .
\end{align}
We note that by the mean-value theorem,
$$
(n+1)^\gamma - n^\gamma = \gamma \, \xi^{\gamma-1} \quad \text{where } \xi \in [n,n+1]\, .
$$
Now,
$$
\begin{array}{lll}
\gamma -1 \geq 0 \Rightarrow \xi^{\gamma-1} \text{ is increasing} \Rightarrow \xi^{\gamma-1} \geq n^{\gamma-1} &\ds  \Rightarrow  \frac{1}{((n+1)^\gamma - n^\gamma)^2} \leq \frac{1}{\gamma \, n^{2(\gamma-1)}}\\[12pt]
\gamma -1 <  0 \Rightarrow \xi^{\gamma-1} \text{ is decreasing} \Rightarrow \xi^{\gamma-1} \geq (n+1)^{\gamma-1} 
&\ds  \Rightarrow  \frac{1}{((n+1)^\gamma - n^\gamma)^2} \leq \frac{1}{\gamma \, (n+1)^{2(\gamma-1)}} \, .
\end{array}
$$
Hence, in both cases the middle term in \eqref{eq:lemvi-aux1} is of order $O(n^{- (\omega +2(\gamma -1))})$.  
Next we estimate the first term in \eqref{eq:lemvi-aux1}.
Note that for $k \leq [\frac{n}{2}]$ we have $n^\gamma-k^\gamma \geq n^\gamma(1-2^{-\gamma})$.
For $[\frac{n}{2}]+1 \leq k \le n-1$,
$$
\begin{array}{lll}
\omega \geq 0 \Rightarrow k^\omega \geq ([\frac{n}{2}]+1)^\omega \\[6pt]
\omega < 0 \Rightarrow k^\omega \geq (n-1)^\omega \, .
\end{array}
$$
Thus
$$
\sum_{k=1}^{n-1} \frac{1}{k^\omega (n^\gamma - k^\gamma)^2} = \left( \sum_{k=1}^{[\frac{n}{2}]} + \sum_{k = [\frac{n}{2}]+1}^{n-1} \right) \frac{1}{k^\omega (n^\gamma - k^\gamma)^2} 
\leq C \left( \frac{1}{n^{2\gamma}} \sum_{k=1}^{n} \frac{1}{k^\omega} + \frac{1}{n^\omega} \sum_{k=[\frac{n}{2}] }^{n-1} \frac{1}{(n^\gamma - k^\gamma)^2} \right),
$$
(with some multiplicative constant $C$ independent of $n$).
We proceed with treating the third term in \eqref{eq:lemvi-aux1}, for which we note that for $k > 2n$ we have 
$|n^\gamma-k^\gamma| = k^\gamma - n^\gamma > n^\gamma (2^\gamma-1)$.  
Further, for $n+2\leq k \leq 2n$ we have
$$
\begin{array}{lll}
\omega \geq 0 \Rightarrow k^\omega \geq (n+2)^\omega \\[6pt]
\omega < 0 \Rightarrow k^\omega \geq (2n)^\omega \, .
\end{array}
$$
Thus
$$
\sum_{k=n+2}^\infty \frac{1}{k^\omega (k^\gamma - n^\gamma)^2} = 
\left( \sum_{k=n+2}^{2n} + \sum_{k=2n+1}^\infty \right) \frac{1}{k^\omega (k^\gamma - n^\gamma)^2} 
\leq C \left( \frac{1}{n^\omega} \sum_{k=n+2}^{2n} \frac{1}{(k^\gamma - n^\gamma)^2} + \sum_{k=2n+1}^\infty \frac{1}{k^{\omega + 2\gamma}} \right) \, .
$$
In order to estimate the four previously obtained sums we observe that, for $i,j \in \doubleIN, i<j$,
$$
\sum_{k=i}^j  f(k) \leq 
\left\{
\begin{array}{ll}
f(j)  + \int_i^j f(x) \, dx \quad & \text{if $f$ is increasing} \\[5pt]
f(i) + \int_i^j f(x) \, dx & \text{if $f$ is decreasing}\, .
\end{array}
\right.
$$
Hence, we have the bounds:
\be
\sum_{k=1}^n  \frac{1}{k^\omega} 
\leq 1 + \int_1^n  x^{- \omega} \, dx \leq C
\left\{
\begin{array}{llll}
n^{1 - \omega} \quad &  \omega < 1 \, , \\[5pt]
\ln n & \omega = 1 \, , \\[5pt]
1  &  \omega > 1 \, .
\end{array}
\right.
\label{eq:lemma4.3bound1}
\ee
$$
\sum_{k=2n+1}^\infty \frac{1}{k^{\omega + 2\gamma}} \leq \frac{1}{(2 n+1)^{\omega + 2\gamma}} + \int_{2n+1}^\infty x^{-(\omega + 2\gamma)} \, dx 
\leq \frac{C}{n^{\omega + 2\gamma -1}} \qquad C = C(\omega,\gamma) \, .
$$
$$
 \sum_{k = [\frac{n}{2}]} ^{n-1} \frac{1}{(n^\gamma - k^\gamma)^2} 
 \leq  \frac{1}{(n^\gamma - (n-1)^\gamma)^2} + \int_{\frac{n}{2}-1}^{n-1} \frac{dx}{(n^\gamma - x ^\gamma)^2} 
\leq \frac{C}{n^{2(\gamma-1)}} + \frac{C}{n^{2 \gamma-1}}   \int_{\half - \frac{1}{n}}^{1 - \frac{1}{n}} \frac{dy}{(1 - y^\gamma)^2}  \, .
$$
Now, using the d'Hospital rule, we easily check that $\lim_{y \to 1}  \frac{1 - y}{1 - y^\gamma} = \frac{1}{\gamma}$. Hence, by the Weierstrass argument, 
the function $\frac{1-y}{1 - y^\gamma}$ is bounded in the interval $[0,1]$ and, consequently,
$$
\int_{\half - \frac{1}{n}}^{1 - \frac{1}{n}} \frac{1}{(1 - y)^2} \, \underbrace{\left( \frac{1-y}{1-y^\gamma}\right)^2  }_{\leq C(\gamma)}  \, dx
\leq C \int_{\half - \frac{1}{n}}^{1 - \frac{1}{n}} \frac{1}{(1 - y)^2} \, dy = C \frac{n^2}{n+2} = O(n) \, .
$$
Therefore, the final estimate is:
$$
\sum_{k = [\frac{n}{2}]} ^{n-1} \frac{1}{n^\gamma - k^\gamma}  \leq C \frac{1}{n^{2(\gamma-1)}} \, .
$$
Similarly,
$$
\sum_{k=n+2}^{2n} \frac{1}{(k^\gamma - n^\gamma)^2} \leq \frac{C}{n^{2(\gamma-1)}} + \frac{C}{n^{2 \gamma -1}} \int_{1 + \frac{2}{n}}^2 \frac{dy}{(y^\gamma -1)^2} \leq C \frac{1}{n^{2(\gamma-1)}} \, .
$$
The order $\alpha$ of the contributing terms of size $O(n^{-\alpha})$ is therefore as follows:
$$
\begin{array}{llll}
\omega + 2 \gamma - 2 & & & (\text{middle term}) \\[6pt]
\left\{
\begin{array}{ll}
\omega + 2 \gamma -1  & \omega < 1 \\
2 \gamma - \epsilon & \omega = 1 \\
2 \gamma & \omega > 1 
\end{array}
\right.
& \text{and} &\omega + 2 \gamma -2  & (\text{left two terms}) \\[18pt]
\omega + 2 \gamma - 2 &  \text{and} & \omega + 2 \gamma - 1 & (\text{right two terms}) \, .
\end{array}
$$
Combining all estimates, we get the final result.
\end{proof}
The following result is an analogue of Lemma~\ref{lemma:4.5} and consequence of Lemma~\ref{lemma:4.3a}.
\begin{lemma}
\label{lemma:4.5a}%
Let $\omega + 2\gamma >1$.
The following estimate holds:
$$
\sum_{m=1,m\ne n}^\infty \frac{1}{m^{\omega}} \frac{1}{ \vert \mu_m - \mu_n \vert^2} = O(\tau_{\omega, \gamma}(n)) \, .
$$
\end{lemma}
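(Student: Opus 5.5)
The plan is to mirror the proof of Lemma~\ref{lemma:4.5}, now feeding in Lemma~\ref{lemma:4.3a} in place of \cite[Lem.~4.3]{Mityagin_Siegl_19}. Split the sum at the index $N_0$ from Assumption~\ref{ass:ev-sep}:
\begin{align*}
\sum_{m=1,m\ne n}^\infty \frac{1}{m^\omega |\mu_m-\mu_n|^2}
= \Bigl(\sum_{m=1,m\ne n}^{N_0} + \sum_{m=N_0+1,m\ne n}^{\infty}\Bigr)\frac{1}{m^\omega|\mu_m-\mu_n|^2}.
\end{align*}
We may assume $n > N_0+1$, since finitely many $n$ only affect the implied constant.

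\emph{Finite part.} For $m\le N_0$ bound $m^{-\omega}\le \max\{1,N_0^{-\omega}\}$. Monotonicity of $\mu_k$ beyond $N_0$ gives $\mu_n-\mu_m\ge \mu_n-\mu_{N_0}$ for the relevant $m$. At worst one may need a finite shift to handle the finitely many nonmonotone initial $\mu_m$; this only changes constants. Hence the finite part is at most $C N_0 \max\{1,N_0^{-\omega}\}/(\mu_n-\mu_{N_0})^2$. By \cite[Lem.~4.1]{Mityagin_Siegl_19}, i.e., summing the gaps $\mu_{k+1}-\mu_k\ge\kappa k^{\gamma-1}$, one has $\mu_n-\mu_{N_0}\gtrsim n^\gamma$, so this part is $O(n^{-2\gamma})$. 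Since $\tau_{\omega,\gamma}(n)\ge n^{-2\gamma}$ in both cases of \eqref{eq:lemma:4.3.7} (for $\omega\le2$, $n^{-(\omega+2\gamma-2)}\ge n^{-2\gamma}$), this part is $O(\tau_{\omega,\gamma}(n))$.

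\emph{Tail part.} Again by \cite[Lem.~4.1]{Mityagin_Siegl_19}, for $m,n\ge N_0+1$ we have
\begin{align*}
|\mu_m-\mu_n|\ge \tfrac{\kappa}{\gamma}\,|(m-1)^\gamma-(n-1)^\gamma| \quad (\gamma\ge1), \qquad
|\mu_m-\mu_n|\ge \tfrac{\kappa}{\gamma}\,|m^\gamma-n^\gamma| \quad (0<\gamma<1).
\end{align*}
Squaring these and inserting them gives the following bounds.
\begin{itemize}
\item For $0<\gamma<1$, the tail is bounded by $(\gamma/\kappa)^2\sum_{m\ne n} m^{-\omega}|m^\gamma-n^\gamma|^{-2}$, which is $O(\tau_{\omega,\gamma}(n))$ directly by Lemma~\ref{lemma:4.3a}.
\item For $\gamma\ge1$, write $m^{-\omega}=((m-1)/m)^{\omega}(m-1)^{-\omega}$ and note that $((m-1)/m)^\omega$ is bounded for $m\ge2$. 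Reindex with $k=m-1$, $n'=n-1$ and apply Lemma~\ref{lemma:4.3a}. This yields $O(\tau_{\omega,\gamma}(n-1))=O(\tau_{\omega,\gamma}(n))$.
\end{itemize}

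The only points requiring care are the following.
\begin{itemize}
\item The hypothesis $\omega+2\gamma>1$ is exactly what Lemma~\ref{lemma:4.3a} needs.
\item The finite part must not dominate. This holds because of the comparison $n^{-2\gamma}\lesssim\tau_{\omega,\gamma}(n)$ noted above.
\end{itemize}
I expect the main (mild) obstacle to be making the lower bound $\mu_n-\mu_{N_0}\gtrsim n^\gamma$, and the two-sided comparison from \cite[Lem.~4.1]{Mityagin_Siegl_19}, precise when $\gamma<1$ and when $\omega<0$. This is handled exactly as in the proof of Lemma~\ref{lemma:4.5}.
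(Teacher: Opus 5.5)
Your proposal is correct and follows the paper's route. The paper's proof says only that one repeats the proof of Lemma~\ref{lemma:4.5} (split at $N_0$, bound the finite part via \cite[Lem.~4.1]{Mityagin_Siegl_19}, reduce the tail to a power sum) with Lemma~\ref{lemma:4.3a} in place of \cite[Lem.~4.3]{Mityagin_Siegl_19}; your explicit check that the finite part satisfies $O(n^{-2\gamma})\subset O(\tau_{\omega,\gamma}(n))$ in both regimes of \eqref{eq:lemma:4.3.7} is exactly the detail that makes this repetition go through.
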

\begin{proof}
Using Lemma~\ref{lemma:4.3a} the proof is a repetition of the proof of Lemma~\ref{lemma:4.5}.
\end{proof}

\subsection{Estimation of the eigenvector perturbations }
We now estimate the contributions $\phi_n^{(k)}$ to the eigenvectors $\phi_n$.

\begin{lemma}
\label{lemma:L2phi_est}
Let $2 \alpha + 2\gamma >1$. Then,
$$
\Vert \phi_n^{(1)} \Vert = O( n^{- \alpha} \tau^\half_{2 \alpha,\gamma}(n) ) \, .
$$
Additionally, if $2 \alpha + \gamma >1$, then, 
$$
\Vert  \phi_n^{(k)}   \Vert  = O(n^{- \alpha} \sigma_{2 \alpha,\gamma }^{k-1}(n)\, \tau_{2\alpha,\gamma}^{\half}(n)),\qquad k=2,3,\ldots \, .
$$
\end{lemma}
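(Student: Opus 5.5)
The plan is to compute $\phi_n^{(k)}$ explicitly in the eigenbasis $\{\psi_m\}$ of $A$ and evaluate the contour integral by residues. Since $K(z)\psi_m=(z-\mu_m)^{-1/2}\psi_m$, we have for $k=1$
\[
\phi_n^{(1)}=\frac{1}{2\pi i}\int_{\Gamma_n}\sum_{m}(z-\mu_m)^{-1/2}(z-\mu_n)^{-1/2}\,b(K(z)\psi_n,K^\ast(z)\psi_m)\ldots\,dz ,
\]
which, after unwinding $(B(z)u,v)=b(K(z)u,K^\ast(z)v)$, becomes
\[
\phi_n^{(1)}=\frac{1}{2\pi i}\int_{\Gamma_n}\sum_{m}\frac{b(\psi_n,\psi_m)}{(z-\mu_m)(z-\mu_n)}\,dz\;\psi_m
\]
(with the appropriate conjugation convention). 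For $m\neq n$ the only pole inside $\Gamma_n$ is $\mu_n$, with residue $b(\psi_n,\psi_m)/(\mu_n-\mu_m)$. For $m=n$ the integrand has a double pole, so its residue vanishes. Hence
\[
\phi_n^{(1)}=\sum_{m\ne n}\frac{b(\psi_n,\psi_m)}{\mu_n-\mu_m}\psi_m .
\]
Orthonormality gives $\|\phi_n^{(1)}\|^2=\sum_{m\neq n}|b(\psi_n,\psi_m)|^2/|\mu_n-\mu_m|^2$. Assumption~\ref{ass:local-subord} bounds this by $M_b^2 n^{-2\alpha}\sum_{m\ne n}m^{-2\alpha}|\mu_m-\mu_n|^{-2}$. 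Lemma~\ref{lemma:4.5a} with $\omega=2\alpha$ applies because $2\alpha+2\gamma>1$, and yields $O(n^{-2\alpha}\tau_{2\alpha,\gamma}(n))$. Taking square roots gives the first claim.

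For $k\ge2$ I would not try to evaluate residues, since $B(z)^{k-1}$ mixes all modes. Instead I would estimate directly on $\Gamma_n$ and write
\[
\phi_n^{(k)}=\frac{1}{2\pi i}\int_{\Gamma_n}(z-\mu_n)^{-1/2}K(z)B(z)^{k-1}B(z)\psi_n\,dz .
\]
The plan has three ingredients.

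First, a Hilbert--Schmidt type bound for $z\in\Gamma_n$:
\[
\|B(z)\|\le\Big(\sum_{m,l}\frac{|b(\psi_l,\psi_m)|^2}{|z-\mu_l||z-\mu_m|}\Big)^{1/2}\lesssim M_b\sum_m\frac{m^{-2\alpha}}{|z-\mu_m|} .
\]
The right-hand side is $O(\sigma_{2\alpha,\gamma}(n))$ by the contour-uniform version of Lemma~\ref{lemma:4.5}. This is exactly the estimate Mityagin--Siegl use to get $\|B(z)\|<1$, and it needs $2\alpha+\gamma>1$.

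Second, the vector $B(z)\psi_n$, whose coefficients are $b(\psi_n,\psi_m)(z-\mu_n)^{-1/2}(z-\mu_m)^{-1/2}$. Then $K(z)$ applied to $B(z)^{k-1}B(z)\psi_n$ must be bounded while retaining one factor $|z-\mu_m|^{-1}$ in the sum over $m$. This is what produces the $\tau^{1/2}$ factor rather than another $\sigma$.

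Third, the integral itself. The factor $|z-\mu_n|^{-1}$ is integrated over a contour of length $\sim n^{\gamma-1}$ on which $|z-\mu_n|\sim n^{\gamma-1}$, so it contributes only $O(1)$.

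The main obstacle is combining these so that exactly one $\tau^{1/2}_{2\alpha,\gamma}(n)$ and $k-1$ factors of $\sigma$ appear, with the prefactor $n^{-\alpha}$. The naive estimate $\|K(z)\|\le \operatorname{dist}(z,\sigma(A))^{-1/2}$ loses powers of $n^{\gamma-1}$. What I would try first is to split $K(z)=K_n(z)+K_n^\perp(z)$, separating the $\psi_n$ component from its orthogonal complement.

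On the complement, the uniform-in-$z$ bound $\sum_{m\ne n}m^{-2\alpha}|z-\mu_m|^{-2}\lesssim\tau_{2\alpha,\gamma}(n)$ holds, because on $\Gamma_n$ one has $|z-\mu_m|\gtrsim|\mu_n-\mu_m|$ for $m\ne n$; Lemma~\ref{lemma:4.5a} then applies. The $\psi_n$-components produce pure poles at $\mu_n$. These are handled by noting $|z-\mu_n|^{-1}\times$ contour length $=O(1)$, possibly applying Cauchy's theorem to discard holomorphic parts.

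Bookkeeping the $n^{-\alpha}$ factor from $b(\psi_n,\cdot)$ against the $\sigma^{k-1}$ factors from the middle $B$'s is routine once this splitting is in place.
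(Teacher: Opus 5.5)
The $k=1$ part of your proposal is the paper's argument (residues, Parseval, local subordination, Lemma~\ref{lemma:4.5a}) and is fine. For $k\ge 2$, however, there is a genuine gap: you name the obstacle but do not resolve it, and the splitting you propose cannot resolve it. Your ingredients are correct: the Hilbert--Schmidt bound $\|B(z)\|\lesssim\sigma_{2\alpha,\gamma}(n)$, uniformity on $\Gamma_n$ via $|z-\mu_m|\ge\frac12|\mu_n-\mu_m|$ for $m\ne n$, and $|\Gamma_n|\sup_{\Gamma_n}|z-\mu_n|^{-1}=O(1)$. The trouble is the direct combination $|\Gamma_n|\sup|z-\mu_n|^{-1/2}\|K(z)\|\,\|B(z)\|^{k-1}\|B(z)\psi_n\|$. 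It loses a factor $\sqrt{\ln(en)}$ when $2\alpha\le1$ and a positive power of $n$ when $2\alpha>1$.

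Splitting off the $\psi_n$-component does not help. $\sup_{z\in\Gamma_n}\|K_n^\perp(z)\|$ is in general also of order $n^{(1-\gamma)/2}$, because the vertical sides of $\Gamma_n$ lie only $O(n^{\gamma-1})$ from $\mu_{n\pm1}$ when the gaps have minimal size, as in the application. The weighted sum $\sum_{m\ne n}m^{-2\alpha}|z-\mu_m|^{-2}\lesssim\tau_{2\alpha,\gamma}(n)$ is only usable when $K(z)$ acts on a vector whose $m$-th coefficient still carries the factor $m^{-\alpha}|z-\mu_m|^{-1/2}$ that $B(z)$ contributes. So the accounting in your last sentence cannot work: you take $\sigma^{k-1}$ from the $k-1$ factors in $B(z)^{k-1}$ and $\tau^{1/2}$ from $K(z)$, but once those factors are replaced by $\|B(z)\|$, the structure needed for the $\tau$-sum is gone. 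The ``routine bookkeeping'' hides exactly the estimate that is missing.

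The missing idea is to keep the outer $K(z)$ together with the adjacent $B(z)$. A Hilbert--Schmidt computation gives, uniformly on $\Gamma_n$,
\[
\|K(z)B(z)\|^2\le\sum_{l,m}\frac{|b(\psi_l,\psi_m)|^2}{|z-\mu_l|\,|z-\mu_m|^{2}}\le M_b^2\Bigl(\sum_{l}\frac{l^{-2\alpha}}{|z-\mu_l|}\Bigr)\Bigl(\sum_{m}\frac{m^{-2\alpha}}{|z-\mu_m|^{2}}\Bigr)\lesssim\sigma_{2\alpha,\gamma}(n)\,\tau_{2\alpha,\gamma}(n).
\]
The terms $l=n$ and $m=n$ are harmless, since $n^{-2\alpha}n^{1-\gamma}\lesssim\sigma_{2\alpha,\gamma}(n)$ and $n^{-2\alpha}n^{2(1-\gamma)}\lesssim\tau_{2\alpha,\gamma}(n)$. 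We also have $\|B(z)\|\lesssim\sigma_{2\alpha,\gamma}(n)$ and $\|B(z)\psi_n\|\lesssim|z-\mu_n|^{-1/2}n^{-\alpha}\sigma^{1/2}_{2\alpha,\gamma}(n)$. Now write the integrand as $(z-\mu_n)^{-1/2}\,[K(z)B(z)]\,B(z)^{k-2}\,[B(z)\psi_n]$. Its norm is $\lesssim|z-\mu_n|^{-1}n^{-\alpha}\sigma^{k-1}_{2\alpha,\gamma}(n)\tau^{1/2}_{2\alpha,\gamma}(n)$, and your third ingredient finishes the proof. The correct count is $\sigma^{k-2}$ from the interior factors, $(\sigma\tau)^{1/2}$ from $K(z)B(z)$, and $\sigma^{1/2}$ from $B(z)\psi_n$.

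Completed this way, your route is genuinely different from the paper's. The paper evaluates the integral by residues as nested sums over $j_l\ne n$, then bounds them with Lemma~\ref{lemma:4.5} and, for the last index via Parseval, Lemma~\ref{lemma:4.5a}. The direct contour estimate needs no residue calculus and automatically covers index paths that return to $n$. Such paths create higher-order poles at $\mu_n$ with nonzero residues, e.g.\ $-(\mu_n-\mu_{j_2})^{-2}$ for $k=2$, $j_1=n$. The paper's nested-sum formula omits these terms, although they obey the same bound.
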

\begin{proof}
We have
$$
\begin{array}{llll}
(\phi_n^{(1)},\psi_m) 
&\ds = \frac{1}{2 \pi i} \int_{\Gamma_n} (B(z) K(z) \psi_n, K^\ast(z) \psi_m) \, dz \\[12pt]
&\ds = \frac{1}{2 \pi i} \int_{\Gamma_n} b(K^2(z) \psi_n, K^{\ast 2}(z) \psi_m) \, dz \\[12pt]
&\ds = \frac{1}{2 \pi i} \int_{\Gamma_n} \frac{b(\psi_n,\psi_m)}{(z - \mu_n) \, (z - \mu_m) } \, dz \\[15pt]
&\ds = \left\{ \begin{array}{ll} 
\frac{b(\psi_n,\psi_m)}{\mu_n - \mu_m} \quad & m \ne n \\[5pt]
0 &  m=n 
\end{array}
\right.  \, .
\end{array}
$$
Consequently, by Lemma~\ref{lemma:4.5a},
$$
\begin{array}{lll}
\Vert \phi_n^{(1)} \Vert^2
&\ds = \sum_{m=1, m\neq n}^\infty \frac{\vert b(\psi_n,\psi_m)\vert^2}{ \vert \mu_m - \mu_n \vert^2} \\[18pt]
&\ds  \leq \frac{M_b^2}{n^{2 \alpha}} \sum_{m=1, m\neq n}^\infty \frac{1}{m^{2 \alpha} \vert \mu_m - \mu_n \vert^2} \\[18pt]
& \ds = O(n^{-2\alpha} \tau_{2\alpha,\gamma}(n)) \, .
 \end{array}
$$
To treat $k>1$ we note that  
$$
\begin{array}{lll}
B(z) \psi_{j_0} &
\ds = \sum_{j_1} ( B(z) \psi_{j_0},\psi_{j_1} ) \psi_{j_1} \\[12pt]
B^2(z) \psi_{j_0}
&\ds = \sum_{j_1} ( B(z) \psi_{j_0},\psi_{j_1} ) B(z)  \psi_{j_1} \\[12pt]
&\ds = \sum_{j_1} \sum_{j_2}  ( B(z) \psi_{j_0},\psi_{j_1} )\,   ( B(z) \psi_{j_1},\psi_{j_2} ) \psi_{j_2} \\[12pt]
&\ds = \sum_{j_1,j_2} \left( \prod_{l=1}^2  ( B(z) \psi_{j_{l-1}},\psi_{j_l} ) \right) \psi_{j_2} \\[12pt]
\vdots \\[12pt]
B^k(z) \psi_{j_0}  &\ds 
= \sum_{j_1,j_2,\ldots,j_k}   \big( \prod_{l=1}^k ( B(z) \psi_{j_{l-1}},\psi_{j_l} ) \big)  \psi_{j_k} \\[12pt]
\ & \ds = \sum_{j_1,j_2,\ldots,j_k} \big(   \prod_{l=1}^k  \frac{  b(\psi_{j_{l-1}},\psi_{j_l})}{(z - \mu_{j_{l-1}})^{1/2} ( z - \mu_{j_l})^{ 1/2}} \big)  \psi_{j_k} \\[12pt]
& \ds = \sum_{j_1,j_2,\ldots,j_k} \left(   \frac{1}{(z- \mu_{j_0})^{1/2}}  \frac{1}{(z- \mu_{j_k})^{1/2}}   \prod_{l=1}^{k-1}   \frac{1}{z- \mu_{j_l}}  \prod_{l=1}^k  b(\psi_{j_{l-1}},\psi_{j_l}) \right)  \psi_{j_k}  \, .
\end{array}
$$
And, so
$$
K(z) B^k(z) K(z) \psi_{j_0} = \sum_{j_1,j_2,\ldots,j_k} \left(      \prod_{l=0}^{k}   \frac{1}{z- \mu_{j_l}}  \prod_{l=1}^k  b(\psi_{j_{l-1}},\psi_{j_l}) \right)  \psi_{j_k}  \, .
$$
Consequently,
\begin{align}
\label{eq:phink}
\begin{array}{lll}
\phi_n^{(k)} 
& \ds = \frac{1}{2 \pi i} \int_{\Gamma_n} K(z) B^k(z) K(z) \psi_n \, dz \\[12pt]
& \ds = \frac{1}{2 \pi i} \int_{\Gamma_n} \sum_{j_1,\ldots,j_k} \frac{b(\psi_n,\psi_{j_1}) \prod_{l=2}^k b(\psi_{j_{l-1}},\psi_{j_l})}{(z - \mu_n) \prod_{l=1}^k (z - \mu_{j_l})} \, \psi_{j_k} \, dz \\[18pt]
&\ds = \sum_{j_1 \neq n} \frac{b(\psi_n,\psi_{j_1})}{\mu_n - \mu_{j_1}} \, 
\sum_{j_2 \neq n} \frac{b(\psi_{j_1},\psi_{j_2})}{\mu_n - \mu_{j_2}} \, \ldots\, 
\sum_{j_k \neq n} \frac{b(\psi_{j_{k-1}},\psi_{j_k})}{\mu_n - \mu_{j_k}} \, \psi_{j_k}  & (\text{Residue Theorem})\, .
\end{array}
\end{align}
Hence
$$
\begin{array}{lll}
\Vert  \phi_n^{(k)}   \Vert
&\ds \leq 
\sum_{j_1 \neq n} \frac{\vert b(\psi_n,\psi_{j_1})\vert }{\vert \mu_n - \mu_{j_1}\vert } \, 
\sum_{j_2 \neq n} \frac{\vert b(\psi_{j_1},\psi_{j_2})\vert }{\vert \mu_n - \mu_{j_2}\vert } \, \ldots\, 
\Vert \sum_{j_k \neq n} \frac{b(\psi_{j_{k-1}},\psi_{j_k})}{\mu_n - \mu_{j_k}}  \psi_{j_k} \Vert \\[18pt]
&\ds \leq 
\sum_{j_1 \neq n} \frac{\vert b(\psi_n,\psi_{j_1})\vert }{\vert \mu_n - \mu_{j_1}\vert } \, 
\sum_{j_2 \neq n} \frac{\vert b(\psi_{j_1},\psi_{j_2})\vert }{\vert \mu_n - \mu_{j_2}\vert } \, \ldots\, 
\left( \sum_{j_k \neq n} \frac{\vert b(\psi_{j_{k-1}},\psi_{j_k})\vert^2 }{\vert \mu_n - \mu_{j_k}\vert^2 }   \right)^\half \, .
\end{array}
$$
Now,
$$
\begin{array}{lll}
\ds \sum_{j_k \neq n}  \frac{\vert b(\psi_{j_{k-1}},\psi_{j_k})\vert^2 }{\vert \mu_n - \mu_{j_k}\vert^2 } 
&\ds \leq  \frac{M_b}{j_{k-1}^{2\alpha}} \sum_{j_k \neq n}   \frac{1}{j_k^{2 \alpha}} \frac{1}{\vert \mu_n - \mu_{j_k} \vert^2}\quad  \\[18pt]
&\ds \leq   C \frac{1}{j_{k-1}^{2\alpha}} \tau_{2 \alpha,\gamma}(n) & (\text{Lemma~\ref{lemma:4.5a}}) \, ,
\end{array}
$$
with $C$ independent of $j_{k-1}$ and $n$.
We proceed now with the sum for $j_{k-1}$ including the factor $\frac{1}{j_{k-1}^{\alpha}} $
$$
\begin{array}{llll}
\ds \sum_{j_{k-1}\ne n} \frac{\vert b(\psi_{j_{k-2}},\psi_{j_{k-1}})\vert}{\vert \mu_n - \mu_{j_{k-1}} \vert}  \frac{1}{j_{k-1}^{\alpha}} 
&\ds \leq \frac{M_b}{j_{k-2}^{\alpha}}  \sum_{j_{k-1}\ne n} \frac{1}{j_{k-1}^{2\alpha}} \frac{1}{\vert \mu_n - \mu_{j_{k-1}}\vert} \\[18pt]
&\ds \leq C \frac{1}{j_{k-2}^{\alpha}} \sigma_{2 \alpha,\gamma}(n) & (\text{Lemma~\ref{lemma:4.5}}) \, ,
\end{array}
$$
with $C$ independent of $j_{k-2}$ and $n$.
Proceeding by induction with the remaining sums, we get:
$$
\Vert  \phi_n^{(k)}   \Vert  = O(n^{- \alpha} \sigma_{2 \alpha,\gamma }^{k-1}(n)\, \tau_{2\alpha,\gamma}^{\half}(n))  \, .
$$
\end{proof}
\begin{lemma}
\label{lemma:H1phi_est}%
Let $2 \alpha + \gamma >1$. Then,
$$
\Vert \mu_n^{-\half} A^\half \phi_n^{(1)} \Vert^2 =  O(n^{-(2 \alpha + \gamma)} \, \sigma_{2 \alpha,\gamma}(n) ) +  O(n^{- 2 \alpha} \tau_{2 \alpha,\gamma} (n) ) \, ,
$$
and
$$
\Vert \mu_n^{-\half} A^{\half} \phi_n^{(k)}   \Vert^2  = O \left(\sigma_{2 \alpha,\gamma }^{2(k-1)}(n) (  n^{-(2 \alpha + \gamma)} \sigma _{2\alpha,\gamma}(n) + n^{- 2\alpha} \tau_{2 \alpha,\gamma}(n) )  \right),
\quad k=2,3,\ldots \, .
$$
\end{lemma}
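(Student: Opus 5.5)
The plan is to reuse the explicit expansions of $\phi_n^{(k)}$ in the orthonormal basis $\{\psi_m\}$ from the proof of Lemma~\ref{lemma:L2phi_est}, and to exploit that $A^{\half}\psi_m=\mu_m^{\half}\psi_m$. By Parseval,
\begin{align*}
\Vert \mu_n^{-\half}A^{\half}\phi_n^{(k)}\Vert^2=\sum_{m}\frac{\mu_m}{\mu_n}\,|(\phi_n^{(k)},\psi_m)|^2 .
\end{align*}
The extra factor $\mu_m/\mu_n$ is the only new ingredient compared with the $L^2$ estimate. I will split it as
\begin{align*}
\frac{\mu_m}{\mu_n}=1+\frac{\mu_m-\mu_n}{\mu_n}.
\end{align*}
The term ``$1$'' reproduces the $L^2$ bound of Lemma~\ref{lemma:L2phi_est}, which gives the $O(n^{-2\alpha}\tau_{2\alpha,\gamma}(n))$ contribution (for $k\ge2$, multiplied by $\sigma^{2(k-1)}$). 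When $\mu_m<\mu_n$ the second term is negative, so it may simply be dropped.

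For $k=1$, $(\phi_n^{(1)},\psi_m)=b(\psi_n,\psi_m)/(\mu_n-\mu_m)$. The second piece is therefore bounded by
\begin{align*}
\mu_n^{-1}\sum_{m\ne n}\frac{|b(\psi_n,\psi_m)|^2}{|\mu_m-\mu_n|}\le \frac{M_b^2}{\mu_n n^{2\alpha}}\sum_{m\ne n}\frac{1}{m^{2\alpha}|\mu_m-\mu_n|}.
\end{align*}
Lemma~\ref{lemma:4.5} with $\omega=2\alpha$ (this is where $2\alpha+\gamma>1$ is needed) bounds the sum by $\sigma_{2\alpha,\gamma}(n)$. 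It then remains to use $\mu_n\gtrsim n^{\gamma}$, which follows by summing the separation condition \eqref{eq:separation_condition}, possibly after shifting $\mu$ by a constant so that $\mu_1>0$. This gives the $O(n^{-(2\alpha+\gamma)}\sigma_{2\alpha,\gamma}(n))$ term.

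For $k\ge2$, the coefficient $(\phi_n^{(k)},\psi_{j_k})$ from \eqref{eq:phink} is a nested sum. I will estimate the outer $k-1$ sums in absolute value exactly as in Lemma~\ref{lemma:L2phi_est}: each application of Lemma~\ref{lemma:4.5} costs a factor $\sigma_{2\alpha,\gamma}(n)$ and hands a weight $j_{l}^{-\alpha}$ inward. What remains is the innermost weighted $\ell^2$ sum over $j_k$, now with the additional factor $\mu_{j_k}/\mu_n$. This innermost sum is handled by the same splitting as for $k=1$, yielding
\begin{align*}
j_{k-1}^{-2\alpha}\bigl(\tau_{2\alpha,\gamma}(n)+\mu_n^{-1}\sigma_{2\alpha,\gamma}(n)\bigr).
\end{align*}
Carrying out the Cauchy--Schwarz/triangle step as before, taking square roots and squaring back, produces the claimed $\sigma^{2(k-1)}(n^{-(2\alpha+\gamma)}\sigma+n^{-2\alpha}\tau)$.

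The main obstacle is bookkeeping in this last step: the innermost sum must be kept as an $\ell^2$ norm so that only one factor carries the $\mu_{j_k}$ weight, while the outer sums are estimated in $\ell^1$. I must also check that the weight $j_{k-1}^{-\alpha}$ propagates correctly, with exponent $2\alpha$ inside Lemmas~\ref{lemma:4.5} and~\ref{lemma:4.5a}, under the sign conventions for $\alpha\le0$.
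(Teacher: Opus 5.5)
Your proposal is correct and follows essentially the same route as the paper: Parseval in the basis $\{\psi_m\}$ with the extra weight $\mu_m/\mu_n$, the split $\mu_m/\mu_n = 1+(\mu_m-\mu_n)/\mu_n$, Lemma~\ref{lemma:4.5} and Lemma~\ref{lemma:4.5a} for the two resulting sums together with $\mu_n\gtrsim n^{\gamma}$, and for $k\ge 2$ the outer sums estimated in $\ell^1$ with the weight placed only on the innermost $\ell^2$ sum. The only difference is that the paper bounds the second part of the split by $|\mu_m-\mu_n|/\mu_n$, whereas you drop the negative terms; also be aware that both arguments rest on the nested-sum formula \eqref{eq:phink}, whose residue evaluation for $k\ge 2$ omits the contributions of index tuples with some $j_l=n$ (these give higher-order poles at $\mu_n$), so a fully rigorous version would have to estimate those extra terms as well.
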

\begin{proof}
$$
\begin{array}{lll}
\Vert \mu_n^{-\half} A^\half \phi_n^{(1)} \Vert^2
&\ds \leq \sum_{m=1,m\ne n}^\infty \frac{\mu_m}{\mu_n} \, \frac{M_B^2}{m^{2 \alpha} n^{2 \alpha}} \, \frac{1}{\vert \mu_m - \mu_n \vert^2} \\[12pt]
&\ds \leq \sum_{m=1,m\ne n}^\infty \left( \frac{\vert \mu_m- \mu_n \vert}{\mu_n} + 1 \right)\, \frac{M_B^2}{m^{2 \alpha} n^{2 \alpha}} \, \frac{1}{\vert \mu_m - \mu_n \vert^2} \\[12pt]
& \ds \leq \frac{M_B^2}{n^{2 \alpha} \, \mu_n} \sum_{m=1,m\ne n}^\infty \, \frac{1}{m^{2 \alpha} } \, \frac{1}{\vert \mu_m - \mu_n \vert} 
+ \frac{M_B^2}{n^{2\alpha} }\sum_{m=1,m\ne n}^\infty \frac{1}{m^{2 \alpha} } \, \frac{1}{\vert \mu_m - \mu_n \vert^2} \\[18pt]
& = O(n^{-(2 \alpha + \gamma)} \, \sigma_{2 \alpha,\gamma}(n) ) +  O(n^{- 2 \alpha} \tau_{2 \alpha,\gamma} (n) )  \, .
\end{array}
$$
Recall the formula for $\phi_n^{(k)}$:
$$
\begin{array}{ll}
\phi_n^{(k)} 
& \ds = \frac{1}{2 \pi i} \int_{\Gamma_n} \sum_{j_1,\ldots,j_k} \frac{b(\psi_n,\psi_{j_1}) \prod_{l=2}^k b(\psi_{j_{l-1}},\psi_{j_l})}{(z - \mu_n) \prod_{l=1}^k (z - \mu_{j_l})} \, \psi_{j_k} \, dz \\[18pt]
&\ds = \sum_{j_1 \neq n} \frac{b(\psi_n,\psi_{j_1})}{\mu_n - \mu_{j_1}} \, 
\sum_{j_2 \neq n} \frac{b(\psi_{j_1},\psi_{j_2})}{\mu_n - \mu_{j_2}} \, \ldots\, 
\sum_{j_k \neq n} \frac{b(\psi_{j_{k-1}},\psi_{j_k})}{\mu_n - \mu_{j_k}} \, \psi_{j_k} \, .
\end{array}
$$
This implies
$$
\mu_n^{-\half} A^{\half} \phi_n^{(k)}  = 
\sum_{j_1 \neq n} \frac{b(\psi_n,\psi_{j_1})}{\mu_n - \mu_{j_1}} \, 
\sum_{j_2 \neq n} \frac{b(\psi_{j_1},\psi_{j_2})}{\mu_n - \mu_{j_2}} \, \ldots\, 
\sum_{j_k \neq n} \frac{b(\psi_{j_{k-1}},\psi_{j_k})}{\mu_n - \mu_{j_k}}  \frac{\mu_{j_k}^\half }{\mu_n^\half}\psi_{j_k} \, ,
$$
and, in turn,
$$
\begin{array}{lll}
\Vert \mu_n^{-\half} A^{\half} \phi_n^{(k)}   \Vert
&\ds \leq 
\sum_{j_1 \neq n} \frac{\vert b(\psi_n,\psi_{j_1})\vert }{\vert \mu_n - \mu_{j_1}\vert } \, 
\sum_{j_2 \neq n} \frac{\vert b(\psi_{j_1},\psi_{j_2})\vert }{\vert \mu_n - \mu_{j_2}\vert } \, \ldots\, 
\Vert \sum_{j_k \neq n} \frac{b(\psi_{j_{k-1}},\psi_{j_k})}{\mu_n - \mu_{j_k}}  \frac{\mu_{j_k}^\half }{\mu_n^\half}\psi_{j_k} \Vert \\[18pt]
&\ds \leq 
\sum_{j_1 \neq n} \frac{\vert b(\psi_n,\psi_{j_1})\vert }{\vert \mu_n - \mu_{j_1}\vert } \, 
\sum_{j_2 \neq n} \frac{\vert b(\psi_{j_1},\psi_{j_2})\vert }{\vert \mu_n - \mu_{j_2}\vert } \, \ldots\, 
\left( \sum_{j_k \neq n} \frac{\vert b(\psi_{j_{k-1}},\psi_{j_k})\vert^2 }{\vert \mu_n - \mu_{j_k}\vert^2 }   \frac{\mu_{j_k}}{\mu_n}\right)^\half \, .
\end{array}
$$
Now,
$$
\begin{array}{lll}
\ds \sum_{j_k \neq n} \frac{\mu_{j_k}}{\mu_n} \frac{\vert b(\psi_{j_{k-1}},\psi_{j_k})\vert^2 }{\vert \mu_n - \mu_{j_k}\vert^2 } 
& \ds = \sum_{j_k \neq n} \left(\frac{\mu_{j_k} - \mu_n }{\mu_n} + 1\right)\, \frac{\vert b(\psi_{j_{k-1}},\psi_{j_k})\vert^2 }{\vert \mu_n - \mu_{j_k}\vert^2 }  \\[18pt]
&\ds \leq  \frac{M_B^2}{j_{k-1}^{2\alpha}}  \left(   \frac{1}{\mu_n } \sum_{j_k \neq n}    \frac{1}{j_k^{2 \alpha}} \frac{1}{\vert \mu_n - \mu_{j_k} \vert}
+  \sum_{j_k \neq n}    \frac{1}{j_k^{2 \alpha}} \frac{1}{\vert \mu_n - \mu_{j_k} \vert^2} \right)\\[18pt]
&\ds \leq   C \frac{1}{j_{k-1}^{2\alpha}} \left( n^{- \gamma} \sigma_{2 \alpha,\gamma}(n) +  \tau_{2 \alpha,\gamma}(n) \right)  \, ,
\end{array}
$$
with $C$ independent of $j_{k-1}$ and $n$.
We proceed now with the sum for $j_{k-1}$ including the factor $\frac{1}{j_{k-1}^{\alpha}} $
$$
\begin{array}{llll}
\ds \sum_{j_{k-1}\ne n} \frac{\vert b(\psi_{j_{k-2}},\psi_{j_{k-1}})\vert}{\vert \mu_n - \mu_{j_{k-1}} \vert}  \frac{1}{j_{k-1}^{\alpha}} 
&\ds \leq \frac{M_b}{j_{k-2}^{\alpha}}  \sum_{j_{k-1}\ne n} \frac{1}{j_{k-1}^{2\alpha}} \frac{1}{\vert \mu_n - \mu_{j_{k-1}}\vert} \\[18pt]
&\ds \leq C \frac{1}{j_{k-2}^{\alpha}} \sigma_{2 \alpha,\gamma}(n) & (\text{Lemma~\ref{lemma:4.5}}) \, ,
\end{array}
$$
with $C$ independent of $j_{k-2}$ and $n$.
Proceeding by induction with the remaining sums, we get:
$$
\Vert \mu_n^{-\half} A^{\half} \phi_n^{(k)}   \Vert  = O(n^{- \alpha} \sigma_{2 \alpha,\gamma }^{k-1}(n) )(  n^{-\frac{\gamma}{2}} \sigma^\half _{2\alpha,\gamma}(n) + \tau^\half_{2 \alpha,\gamma}(n) ) ) \, .
$$
\end{proof}

\subsection{Basis properties }

We arrive at the main results of interest.
\begin{theorem}
\label{thm:L2Bari_basis}
Under the assumptions:
$$
\left\{
\begin{array}{lll}
2 \alpha + \gamma  > 3/2 & \alpha \leq 1\, , \\
2 \alpha + \gamma > 1 & \alpha > 1\, , 
\end{array}
\right.
$$
the system $\{ \phi_n \}_1^\infty$ is quadratically close to $\{ \psi_n \}_1^\infty$:
$$
\sum_{n=1}^\infty \Vert \psi_n - \phi_n \Vert^2 < \infty \, .
$$
\end{theorem}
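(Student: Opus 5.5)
Write $\phi_n-\psi_n=\sum_{k\ge1}\phi_n^{(k)}$ for $n$ large, via the Riesz projection representation \eqref{eq:3.13} and the Neumann series of $(I-B(z))^{-1}$. Mityagin and Siegl guarantee this for $n\ge N$, since $\|B(z)\|<1$ on $\Gamma_n$. The finitely many $n<N$ contribute a finite amount to $\sum\|\psi_n-\phi_n\|^2$, so only the tail matters. The plan has three steps: bound $\|\phi_n-\psi_n\|\le\sum_k\|\phi_n^{(k)}\|$ with Lemma~\ref{lemma:L2phi_est}, sum the resulting geometric series in $k$, and check square-summability in $n$ under the stated hypotheses on $\alpha,\gamma$.

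\emph{Summing over $k$.} Lemma~\ref{lemma:L2phi_est} gives
\[
\|\phi_n^{(k)}\|\le C^k n^{-\alpha}\sigma^{k-1}_{2\alpha,\gamma}(n)\tau^{1/2}_{2\alpha,\gamma}(n).
\]
The hypothesis $2\alpha+\gamma>1$ is assumed in both cases. It gives $\sigma_{2\alpha,\gamma}(n)\to0$: for $2\alpha\le1$, $\sigma=n^{-(2\alpha+\gamma-1)}\ln(en)\to0$, and otherwise $\sigma=n^{-\gamma}$. Hence $C\sigma_{2\alpha,\gamma}(n)\le1/2$ for $n\ge N$, and the series in $k$ is dominated by $2C n^{-\alpha}\tau^{1/2}_{2\alpha,\gamma}(n)$.

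The main obstacle is that the constants in Lemma~\ref{lemma:L2phi_est} must be uniform in $k$, namely of the form $C^k$ with $C$ independent of $n$ and $k$. I would re-inspect its induction to confirm this. Each step multiplies by $M_b$ times the constant of Lemma~\ref{lemma:4.5}, which is independent of $n$ and of the index $j$, so the constant is indeed of the form $C^k$.

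\emph{Square-summability.} It remains to show $\sum_n n^{-2\alpha}\tau_{2\alpha,\gamma}(n)<\infty$, using \eqref{eq:lemma:4.3.7} with $\omega=2\alpha$.
\begin{itemize}
\item If $\alpha\le1$, i.e.\ $2\alpha\le2$, then $\tau=n^{-(2\alpha+2\gamma-2)}$, and the summand is $n^{-(4\alpha+2\gamma-2)}$. This is summable iff $4\alpha+2\gamma-2>1$, i.e.\ $2\alpha+\gamma>3/2$, which is exactly the first hypothesis.
\item If $\alpha>1$, then $\tau=n^{-2\gamma}$ and the summand is $n^{-2\alpha-2\gamma}$. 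This is summable since $2\alpha+2\gamma>2\alpha>1$ (recall $\gamma>0$).
\end{itemize}
In both cases the side condition $2\alpha+2\gamma>1$ required by Lemma~\ref{lemma:L2phi_est} holds automatically. Combining the three steps yields $\sum_n\|\psi_n-\phi_n\|^2<\infty$.
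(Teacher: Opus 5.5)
Your proposal is correct and follows essentially the paper's argument: you expand $\phi_n-\psi_n$ into the terms $\phi_n^{(k)}$, bound them via Lemma~\ref{lemma:L2phi_est}, sum the geometric series in $k$, and check summability of $n^{-2\alpha}\tau_{2\alpha,\gamma}(n)$ in the two regimes $\alpha\le 1$ and $\alpha>1$. You are slightly more careful than the paper, which splits off only the $k=1$ term and leaves two points implicit: that the constants are uniform in $k$ (of the form $C^k$), and that $\sigma_{2\alpha,\gamma}(n)$ is small enough for the geometric series to converge.
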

\begin{proof}
Indeed, by Lemma~\ref{lemma:L2phi_est},
$$
\begin{array}{lll}
\Vert \psi_n - \phi_n \Vert^2 
& \ds \leq 2 \Vert \phi_n^{(1)} \Vert^2 + 2 \Vert \sum_{k=2}^\infty \phi_n^{(k)} \Vert^2  \\[9pt]
& \ds \leq O(n^{- 2 \alpha} \tau_{2 \alpha,\gamma}(n))  \left( 1   + \left(  \sum_{k=2}^\infty O(\sigma^{k-1}_{2 \alpha,\gamma}(n) ) \right)^2\right) \, .
\end{array}
$$
The series is summable and
$$
\left\{
\begin{array}{ll}
2 \alpha +2 \alpha + 2 (\gamma -1) > 1\qquad \quad & \alpha \leq 1\, ,  \\
2 \alpha + 2 \gamma >1 & \alpha > 1 \, .
\end{array}
\right.
$$
\end{proof}
A similar results holds for the rescaled eigenvectors and the $V$-norm.
\begin{theorem}
\label{thm:H1Bari_basis}
Under the assumptions:
$$
\left\{
\begin{array}{lll}
2 \alpha + \gamma  > \frac{3}{2} & \alpha \leq 1\, , \\
2 \alpha + \gamma  > 1 & \alpha > 1\, ,
\end{array}
\right.
$$
the system $ \{\mu_n^{-\half}\phi_n \}_1^\infty$ is quadratically close to $\{ \mu_n^{-\half} \psi_n \}_1^\infty$ in the $V$-norm:
$$
\sum_{n=1}^\infty \Vert \mu_n^{-\half} A^{\half} \psi_n - \mu_n^{-\half} A^{\half} \phi_n \Vert^2 < \infty \, .
$$
\end{theorem}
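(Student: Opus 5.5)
The argument mirrors the proof of Theorem~\ref{thm:L2Bari_basis}, with Lemma~\ref{lemma:H1phi_est} taking the place of Lemma~\ref{lemma:L2phi_est}. Since $\phi_n-\psi_n=\sum_{k\ge 1}\phi_n^{(k)}$, the triangle inequality gives
\begin{align*}
\Vert \mu_n^{-\half} A^{\half}(\psi_n-\phi_n)\Vert^2 \le 2\Vert \mu_n^{-\half}A^\half\phi_n^{(1)}\Vert^2 + 2\Bigl(\sum_{k=2}^\infty \Vert \mu_n^{-\half}A^\half\phi_n^{(k)}\Vert\Bigr)^2 .
\end{align*}
Before using this, one must know that the Neumann series $\sum_k \phi_n^{(k)}$ also converges in the $V$-norm, so that $A^\half$ may be applied termwise. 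For large $n$ this will follow from the bounds of Lemma~\ref{lemma:H1phi_est}, since $\sigma_{2\alpha,\gamma}(n)\to 0$. This decay requires $2\alpha+\gamma>1$: for $2\alpha\le 1$ one has $\sigma_{2\alpha,\gamma}(n)=n^{-(2\alpha+\gamma-1)}\ln(en)$, and otherwise $\sigma=n^{-\gamma}$. Finitely many $n$ do not affect summability, so we may take $n$ large enough that $\sigma_{2\alpha,\gamma}(n)\le \frac12$ and the implied constants in the lemma are uniform.

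Inserting Lemma~\ref{lemma:H1phi_est}, the tail $\sum_{k\ge2}$ is dominated by a geometric series in $\sigma_{2\alpha,\gamma}(n)$. The whole expression is therefore
\begin{align*}
O\bigl(n^{-(2\alpha+\gamma)}\sigma_{2\alpha,\gamma}(n) + n^{-2\alpha}\tau_{2\alpha,\gamma}(n)\bigr).
\end{align*}
Here one should double-check the bookkeeping in the lemma: the factor $n^{-\alpha}$ in the final line of its proof appears to be missing from the displayed statement. Either way, the displayed bound is weaker, so it suffices for what follows.

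It remains to verify summability in $n$, case by case. First take $\alpha\le1$, so $2\alpha\le 2$ and $\tau_{2\alpha,\gamma}(n)=n^{-(2\alpha+2\gamma-2)}$. The second term is then $n^{-(4\alpha+2\gamma-2)}$, which is summable iff $4\alpha+2\gamma>3$, i.e.\ iff $2\alpha+\gamma>3/2$. The first term is at most $n^{-(2\alpha+\gamma)}\ln(en)$ (or $n^{-(2\alpha+2\gamma)}$), which is summable because $2\alpha+\gamma>3/2>1$. Now take $\alpha>1$, so $\tau=n^{-2\gamma}$ and $\sigma=n^{-\gamma}$. The terms become $n^{-(2\alpha+2\gamma)}$ and $n^{-(2\alpha+2\gamma)}$, both summable since $2\alpha>2$.

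The main obstacle is not the summation but justifying the termwise application of $A^\half$ to the Riesz-projection series, so that $\phi_n$ indeed lies in $V$ with the stated expansion. I would settle this by noting that each $\phi_n^{(k)}$ has an explicit expansion in the $\psi_j$ given by \eqref{eq:phink}, and that the bounds of Lemma~\ref{lemma:H1phi_est} show absolute convergence in $V$. Since $A^\half$ is closed, the $V$-limit coincides with the $H$-limit $\phi_n-\psi_n$. The finitely many small $n$, including any non-simple eigenvalues, are handled by the fact that root vectors lie in $\dom(T)\subset V$.
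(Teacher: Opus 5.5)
Your proposal is correct and follows the same route as the paper. You split off $\phi_n^{(1)}$ by the triangle inequality, bound the $k\ge 2$ tail through Lemma~\ref{lemma:H1phi_est} by a geometric series in $\sigma_{2\alpha,\gamma}(n)$, and check summability of $n^{-(2\alpha+\gamma)}\sigma_{2\alpha,\gamma}(n)+n^{-2\alpha}\tau_{2\alpha,\gamma}(n)$ regime by regime; your split at $\alpha=1$ with a sub-split at $2\alpha=1$ is equivalent to the paper's three cases. Your closedness argument for applying $A^{1/2}$ termwise is a justification the paper omits, but two small points need correcting: there is no missing factor $n^{-\alpha}$, since squaring the last line of that lemma's proof gives exactly the displayed bound; and because the lemma's constants grow like $C^{k-1}$, the geometric series needs $C\,\sigma_{2\alpha,\gamma}(n)\le\frac12$, not just $\sigma_{2\alpha,\gamma}(n)\le\frac12$ (harmless since $\sigma_{2\alpha,\gamma}(n)\to0$, and equally implicit in the paper).
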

\begin{proof}
In both regimes for $\alpha$, assumption $2 \alpha + \gamma > 1$ for applying Lemma~\ref{lemma:H1phi_est}, is satisfied.
Now, by Lemma~\ref{lemma:H1phi_est},
$$
\begin{array}{lll}
\Vert \mu_n^{-\half} A^{\half}\psi_n - \mu_n^{-\half} A^{\half} \phi_n \Vert^2 
& \ds \leq 2 \Vert \mu_n^{-\half} A^{\half}\phi_n^{(1)} \Vert^2 + 2 \Vert \sum_{k=2}^\infty \mu_n^{-\half} A^{\half} \phi_n^{(k)} \Vert^2  \\[9pt]
& \ds \leq \left( O(n^{-(2 \alpha + \gamma)} \, \sigma_{2 \alpha,\gamma}(n) ) +  O(n^{- 2 \alpha} \tau_{2 \alpha,\gamma} (n) ) \right) 
 \left( 1   + \left(  \sum_{k=2}^\infty O(\sigma^{k-1}_{2 \alpha,\gamma}(n) ) \right)^2\right) \, .
\end{array}
$$
In order to show summability, we need to take into account definitions of $\sigma_{2 \alpha,\gamma}$
and $\tau_{2 \alpha,\gamma}$, and consider three regimes:\\
{\em Case 1:} $\alpha \le \half$. We must have:
$$
\left.
\begin{array}{rlr}
2 \alpha + \gamma + 2 \alpha + \gamma -1 > 1  & \Rightarrow \quad 2 \alpha + \gamma > 1  \text{ and},\\[6pt]
2 \alpha + 2 \alpha + 2 \gamma - 2 > 1 &  \Rightarrow \quad 2 \alpha + \gamma > \frac{3}{2} 
\end{array}
\right\}
\Leftarrow \quad 2 \alpha + \gamma > \frac{3}{2} \, .
$$
{\em Case 2:} $\half < \alpha \le 1$. We must have:
$$
\left.
\begin{array}{rlr}
2 \alpha +  \gamma + \gamma > 1 & \Rightarrow \quad 2\alpha + 2\gamma > 1 \text{ and}, \\[6pt]
2 \alpha + 2 \alpha + 2 \gamma - 2 > 1 & \Rightarrow \quad 2 \alpha + \gamma > \frac{3}{2}
\end{array}
\right\}
\Leftarrow \quad 2 \alpha + \gamma > \frac{3}{2} \, .
$$
{\em Case 3:} $\alpha \ge 1$. We must have:
$$
\left.
\begin{array}{rlr}
2 \alpha + \gamma + \gamma> 1 &  \Rightarrow \quad 2\alpha + 2\gamma > 1 \text{ and}, \\[6pt]
2 \alpha + 2 \gamma > 1 &  \Rightarrow\quad   2\alpha +2 \gamma > 1
\end{array}
\right\}
\Leftarrow \quad 2 \alpha + \gamma > 1 \, .
$$
As we can see, the three regimes collapse to just two  reflected in the assumptions of the theorem.
The series is summable and the leading term is of order $O(n^{-(1+\epsilon}))$.
\end{proof}

\begin{lemma}\label{lem:renormed_bari_basis}
It holds that $\displaystyle\lim_{j \to \infty} \frac{\vert \lambda_j \vert^\half}{\mu_j^\half} = \lim_{j \to \infty} \frac{\vert \lambda_j \vert^\half}{\Vert \phi_j \Vert_{V}} = 1$.
If the assumption of Thm.~\ref{thm:H1Bari_basis} is satisfied, then the sequence $\{\phi_j/\|\phi_j\|_{V}\}_{j=1}^\infty$ remains quadratically close to the sequence $\{\psi_j/\mu_j^\half\}_{j=1}^\infty$ in the $V$-norm.
\end{lemma}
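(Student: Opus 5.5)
The plan is to work with the eigenvector normalization coming out of the Riesz projection formula \eqref{eq:3.13}, i.e.\ $\phi_n = \psi_n + \sum_{k\ge1}\phi_n^{(k)}$. Since the residue computation in the proof of Lemma~\ref{lemma:L2phi_est} shows $(\phi_n^{(1)},\psi_n)=0$ and every $\phi_n^{(k)}$ is expanded in $\psi_{j_k}$ with $j_k\neq n$, all corrections are orthogonal to $\psi_n$. Hence $(\phi_n,\psi_n)=1$ and $\|\phi_n-\psi_n\|\to0$ by Lemma~\ref{lemma:L2phi_est}. (For the $L^2$-normalized vectors one rescales by $\|\phi_n\|\to1$; this does not affect the ratios in the claim.)

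\textbf{Eigenvalues.} Test the eigenvalue equation $a(\phi_n,v)+b(\phi_n,v)=\lambda_n(\phi_n,v)$ with $v=\psi_n$. Since $a(\phi_n,\psi_n)=\mu_n(\phi_n,\psi_n)=\mu_n$, this gives
\[
\lambda_n=\mu_n+b(\phi_n,\psi_n).
\]
Writing $\phi_n=\psi_n+e_n$, Assumption~\ref{ass:local-subord} gives $|b(\psi_n,\psi_n)|\le M_b n^{-2\alpha}$. The cross term $b(e_n,\psi_n)$ is bounded through the expansion of $e_n$ in $\{\psi_m\}$, combining \eqref{eq:subordination_condition} with the coefficient bounds from Lemmas~\ref{lemma:L2phi_est}--\ref{lemma:H1phi_est}. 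Alternatively, use the $p$-subordination \eqref{eq:b-subordinated-to-a} in polarized form, $|b(u,v)|\lesssim \|u\|_V^{p}\|u\|^{1-p}\|v\|_V^{p}\|v\|^{1-p}$. This yields $|\lambda_n-\mu_n| = o(\mu_n)$ because $p<1$ and $\|\phi_n\|_V\lesssim\mu_n^{1/2}$, so $|\lambda_n|/\mu_n\to1$. The bound $\|\phi_n\|_V\lesssim\mu_n^{1/2}$ follows from Lemma~\ref{lemma:H1phi_est}: summing over $k$ gives $\mu_n^{-1/2}\|A^{1/2}(\phi_n-\psi_n)\|\to0$, since the right-hand sides there tend to zero when $2\alpha+\gamma>1$.

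\textbf{$V$-norms.} The same estimate gives $\|\phi_n\|_V/\mu_n^{1/2} = \|\mu_n^{-1/2}A^{1/2}\phi_n\| \to \|\mu_n^{-1/2}A^{1/2}\psi_n\|=1$. Combining this with the eigenvalue limit gives the second limit.

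\textbf{Quadratic closeness.} Under the hypothesis of Thm.~\ref{thm:H1Bari_basis}, set $s_n:=\mu_n^{1/2}/\|\phi_n\|_V$ and split
\[
\frac{\phi_n}{\|\phi_n\|_V}-\frac{\psi_n}{\mu_n^{1/2}} = s_n\Big(\frac{\phi_n-\psi_n}{\mu_n^{1/2}}\Big)+(s_n-1)\frac{\psi_n}{\mu_n^{1/2}}.
\]
The first term is square summable in $V$ by Thm.~\ref{thm:H1Bari_basis}, using that $s_n$ is bounded. For the second term, $\|\psi_n/\mu_n^{1/2}\|_V=1$, and by the reverse triangle inequality
\[
|s_n^{-1}-1| \le \|\mu_n^{-1/2}A^{1/2}(\phi_n-\psi_n)\|,
\]
which is square summable by the same theorem. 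Hence $\sum_n|s_n-1|^2<\infty$, which finishes the proof.

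\textbf{Main obstacle.} The delicate step is the eigenvalue limit $|\lambda_n|/\mu_n\to1$: one must control $b(e_n,\psi_n)$ when $\alpha$ may be $0$, i.e.\ when $b$ is not compact relative to the $\psi_n$. I would handle it through the subordination exponent $p<1$ rather than through coefficientwise bounds, since the $V$-estimate of $e_n$ is only $o(\mu_n^{1/2})$. It is enough for the cross term to be $o(\mu_n)$, and the subordination bound gives $O(\mu_n^{p})$.
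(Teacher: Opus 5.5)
Your proof is correct, and it takes a somewhat different route from the paper's in two places.

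\emph{Eigenvalue limit.} The paper tests the eigenproblem with $\phi_j$ itself, which gives $\|\phi_j\|_V^2+b(\phi_j,\phi_j)=\lambda_j\|\phi_j\|^2$. The quadratic subordination \eqref{eq:b-subordinated-to-a} then applies verbatim, and together with $\|\phi_j\|\to1$ and $\|\phi_j\|_V/\mu_j^{1/2}\to1$ it yields both limits. You test with $\psi_n$ instead. This requires a bound on the sesquilinear quantity $b(\phi_n,\psi_n)$, but it gives the slightly stronger statement $\lambda_n/\mu_n\to1$ in $\mathbb{C}$.

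\emph{Quadratic closeness.} The paper compares $\phi_j/|\lambda_j|^{1/2}$ with $\psi_j/\mu_j^{1/2}$. It controls the renormalization error through the rate $|\lambda_j-\mu_j|/\mu_j\le Cj^{-(2\alpha+\gamma)}$, imported from Mityagin--Siegl (Thm.~3.2, Rem.~3.3). You normalize by $\|\phi_j\|_V$, which is the normalization in the statement. The reverse triangle inequality then bounds $|\|\phi_j\|_V/\mu_j^{1/2}-1|$ by the same quantity that Theorem~\ref{thm:H1Bari_basis} sums. Passing from the paper's $|\lambda_j|^{1/2}$-normalization to $\|\phi_j\|_V$ needs essentially this step anyway. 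Your argument is self-contained and needs no eigenvalue asymptotics; the paper's additionally produces the explicit rate \eqref{eq:lem:renormed_bari_basis-10}.

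Two small corrections, neither of which affects the argument.

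First, $(\phi_n,\psi_n)=1$ is not exact. Formula \eqref{eq:phink} drops the multi-indices in which some intermediate $j_l$ equals $n$. For those, the pole at $\mu_n$ has higher order and a nonzero residue. For example, $\phi_n^{(2)}$ has the $\psi_n$-component $-\sum_{m\ne n}b(\psi_n,\psi_m)b(\psi_m,\psi_n)/(\mu_n-\mu_m)^2$. You only need $(\phi_n,\psi_n)\to1$, which follows from $\|\phi_n-\psi_n\|\to0$. Your identity then reads $\lambda_n=\mu_n+b(\phi_n,\psi_n)/(\phi_n,\psi_n)$.

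Second, the product bound $|b(u,v)|\lesssim\|u\|_V^p\|u\|^{1-p}\|v\|_V^p\|v\|^{1-p}$ is not obtained by merely polarizing \eqref{eq:b-subordinated-to-a}. Polarization only gives $(\|u\|_V^2+\|v\|_V^2)^p(\|u\|^2+\|v\|^2)^{1-p}$. That weaker bound already yields $|b(\phi_n,\psi_n)|=O(\mu_n^p)$, which is all you use. The product form does also hold here, since it follows directly from \eqref{eq:subordination_condition}.
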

\begin{proof}
Testing the eigenproblem with $\phi_j$ yields
$$
\Vert \phi_j \Vert^2_{V} + b(\phi_j,\phi_j) = \lambda_j \|\phi_j\|^2. 
$$
Due to Lem.~\ref{lemma:L2phi_est} and Lem.~\ref{lemma:H1phi_est} we get the asymptotic norm equivalences $\|\phi_j\|/\|\psi_j\|\to1$ and $\|\phi_j\|_V/\|\psi_j\|_V\to1$ as $j\to+\infty$.   
Since the form $b(\cdot,\cdot)$ is subordinated to the form $a(\cdot,\cdot)$ (see \cite[Lemma~{4.2}]{Mityagin_Siegl_19} and \eqref{eq:b-subordinated-to-a} for the definition of subordination) 
and $\|{\psi}_j\|/\|{\psi}_j\|_{V} \rightarrow 0$ for $j \rightarrow \infty$, we arrive at
$$
\left\vert 1 - \frac{\lambda_j \|\phi_j\|^2}{\Vert \phi_j \Vert^2_{V}} \right\vert = \frac{\vert b(\phi_j,\phi_j) \vert}{\Vert \phi_j \Vert_{V}^2} \to 0 \text{ as } j\to+\infty \, .
$$
Additionally, 
$$
\big\Vert \frac{\psi_{j}}{\mu_j^\half} - \frac{\phi_j }{\vert \lambda_j \vert^{ \half} }\big\Vert_{V}
\leq \big\Vert \frac{\psi_j}{\mu_j^\half}  - \frac{\phi_j}{\mu_j^\half} \big\Vert_{V1}
+\big\Vert \frac{\phi_j}{\vert \lambda_j \vert^\half} \big\Vert_{V} \, \big\vert 1 - \bigl\vert \frac{\lambda_j}{\mu_j} \bigr\vert ^\half \big\vert \, .
$$
Furthermore,
\begin{align}
\label{eq:lem:renormed_bari_basis-10}
\big|1-\bigl|\frac{\lambda_j}{\mu_j} \bigr|^\half \big|
\leq |1+\bigl|\frac{\lambda_j}{\mu_j} \bigr|^\half \big|^{-1} \frac{|\lambda_j-\mu_j|}{\mu_j}
\leq \frac{|\lambda_j-\mu_j|}{\mu_j}
\leq \frac{C}{j^{2\alpha+\gamma}}
\end{align}
due to \cite[Thm.~3.2,Rem.~3.3]{Mityagin_Siegl_19} and $\mu_j\geq C'j^\gamma$.
Hence the remaining claims follow.
\end{proof}

\begin{remark}
All estimates and results discussed here hold only for a sufficiently large $n>N$ where the eigenvalues are of algebraic multiplicity equal one. We do not control the eigenvalues in preasymptotic range where they may come with Jordan chains. Nevertheless, if we use the possible generalized eigenvectors (root vectors) in the preasymptotic range, and the eigenvectors $\phi_n,\, n > N$, the eigensystem still forms Bari bases in both the spaces $X$ and $V$. 
\eremk
\end{remark}

\begin{theorem}\label{thm:RieszBasisInV}
Under the assumption $2\alpha+\gamma>1$ the system $\{\mu_n^{-\half}\phi_n\}_{n=1}^\infty$ (and also $\{\|\phi_n\|_V^{-1}\phi_n\}_{n=1}^\infty$) is a Riesz basis in $V$.
\end{theorem}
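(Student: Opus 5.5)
The plan is to show that $\{\mu_n^{-\half}\phi_n\}$ is quadratically close in $V$ to the orthonormal basis $\{\mu_n^{-\half}\psi_n\}$ of $V$, at least for a tail, and then use completeness and $\omega$-linear independence to obtain a Bari (hence Riesz) basis. The obstacle is that for $1<2\alpha+\gamma\le 3/2$ Theorem~\ref{thm:H1Bari_basis} is not available, because the perturbations $\mu_n^{-\half}A^\half(\phi_n-\psi_n)$ need not be square summable. I would therefore not rely on quadratic closeness. Instead I would imitate the $H$-space Riesz basis argument of \cite{Mityagin_Siegl_19}, where under $2\alpha+\gamma>1$ the Riesz basis property in $H$ follows from the uniform boundedness of the spectral projections: with the Riesz projections $P_n=\frac1{2\pi i}\int_{\Gamma_n}(z-T)^{-1}dz$ and $Q_n$ the orthogonal projections onto $\operatorname{span}\psi_n$, one shows that $\sum_n (P_n-Q_n)$ converges suitably, e.g.\ via a Kato-type lemma or by showing that $\sum_{n>N}\|(P_n-Q_n)f\|^2\le q\|f\|^2$-type estimates hold.

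The concrete plan is to transfer everything to $V$ by conjugation with $A^{\half}$. Define $\widehat P_n:=A^{\half}P_nA^{-\half}$ and $\widehat Q_n:=A^{\half}Q_nA^{-\half}=Q_n$, viewed as operators on $H$. Then $\{\mu_n^{-\half}\phi_n\}$ is a Riesz basis in $V$ if and only if the family $\{\widehat P_n\}$, together with the finite-rank projection onto the preasymptotic root spaces, forms an unconditional decomposition of $H$. Using the resolvent factorization \eqref{eq:resolvent_factorization}, I would write
\begin{align*}
\widehat P_n-Q_n=\frac1{2\pi i}\int_{\Gamma_n}A^{\half}K(z)\,B(z)(I-B(z))^{-1}K(z)A^{-\half}\,dz .
\end{align*}
On $\Gamma_n$ the operator $A^{\half}K(z)$ behaves like $\mu_n^{\half}K(z)$ up to the factors $(\mu_m/\mu_n)^{\half}$, while $K(z)A^{-\half}$ gains the reciprocal factor. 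The matrix entries with respect to $\{\psi_m\}$ therefore pick up $(\mu_m/\mu_k)^{\half}$ relative to the $H$ case. The key step is to estimate $\sum_{n>N}\|(\widehat P_n-Q_n)f\|^2\lesssim\|f\|^2$, or to prove a Hilbert--Schmidt-type bound on blocks, by repeating the Schur-test computations from the proofs of Lemmas~\ref{lemma:L2phi_est} and \ref{lemma:H1phi_est}. The ratio $\mu_m/\mu_n$ is split as $1+(\mu_m-\mu_n)/\mu_n$, exactly as in Lemma~\ref{lemma:H1phi_est}. This produces, besides the $H$-type sums, additional sums of the form $\mu_n^{-1}\sum_m m^{-2\alpha}|\mu_m-\mu_n|^{-1}=O(n^{-\gamma}\sigma_{2\alpha,\gamma}(n))$, which are controlled by Lemma~\ref{lemma:4.5} whenever $2\alpha+\gamma>1$.

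An alternative route, which I would try first because it is cheaper, is to use interpolation and duality. The root vectors $\chi_n$ of $T^\star$ also form a Riesz basis of $H$ by the same theory. A Riesz basis property in $H$ for both $T$ and $T^\star$, combined with the relation $\mu_n^{-\half}\|\phi_n\|_V\to1$ from Lemma~\ref{lem:renormed_bari_basis}, should give the Riesz property in $V=\dom(A^{\half})$. The argument would proceed as follows. Since $\dom(T)$ is characterized by $\|Tf\|\sim\|f\|_{\dom(T)}$ and $\lambda_n\sim\mu_n$, the system $\{\lambda_n^{-1}\phi_n\}$ is Riesz in $\dom(T)$ with the graph norm. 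One then checks that $\dom(T)$ and $\dom(A)$ interpolate with $H$ to $V$. This last point needs $[H,\dom(T)]_{1/2}=\dom(A^{\half})$, which follows from the form-boundedness of $b$ with bound $0$ and the $m$-sectoriality of $T$ (Kato's square root problem for regular sectorial forms). For our application this is fine, but in this abstract generality it is the delicate point, and if it fails I would fall back on the direct projection estimates above. Finally, the claim for $\{\|\phi_n\|_V^{-1}\phi_n\}$ follows because $\|\phi_n\|_V\mu_n^{-\half}$ is bounded above and below; the tail is handled by Lemma~\ref{lem:renormed_bari_basis}, and finitely many preasymptotic root vectors are nonzero.
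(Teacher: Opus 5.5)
Your first route is the paper's proof in different packaging. Write $f=A^{-\half}g$. Then $\|\widehat P_n g\|=\mu_n^{-\half}\|\phi_n\|_V\,|(f,\hat\phi_n)_V|$, where $\hat\phi_n=\mu_n^{\half}A^{-1}\tilde\phi_n$ is the paper's biorthogonal system. Likewise, $\|\widehat P_n^\star g\|\simeq|(f,\mu_n^{-\half}\phi_n)_V|$ up to factors bounded above and below. So the Bessel bounds for $\{\widehat P_n\}$ and $\{\widehat P_n^\star\}$ are exactly the two series in \eqref{eq:Gohberg_cond} that the paper verifies for the Gohberg--Krein criterion.

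Two points in your sketch need to be made precise:
\begin{enumerate}
\item The one-sided bound $\sum_{n>N}\|(\widehat P_n-Q_n)g\|^2\lesssim\|g\|^2$ gives only the Bessel bound for $\{\widehat P_n\}$. You must also treat the adjoint family, or use Kato's similarity lemma (the operator $W=\sum_n Q_n\widehat P_n$). That lemma only needs $\sum_{n>N}\|Q_n(\widehat P_n-Q_n)g\|^2\le c_0\|g\|^2$, but with $c_0<1$. Note that $Q_n(\widehat P_n-Q_n)g=\mu_n^{\half}(f,\phi_n^\star-\psi_n)\psi_n$ when $\phi_n=P_n\psi_n$ and $\phi_n^\star=P_n^\star\psi_n$.
\item For $1<2\alpha+\gamma\le 3/2$, no per-$n$ or blockwise Hilbert--Schmidt bound is summable; this is exactly why Theorem~\ref{thm:H1Bari_basis} is unavailable. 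So splitting $\mu_m/\mu_n$ and applying Lemma~\ref{lemma:4.5} for fixed $n$, as in Lemma~\ref{lemma:H1phi_est}, does not suffice. You need a genuine $\ell^2$-operator bound. The paper obtains it in three steps:
\begin{itemize}
\item it truncates the perturbation series at an order $j$ with $2(j+1)(2\alpha+\gamma-1)>1$, so the remainder is square summable by Lemma~\ref{lemma:H1phi_est};
\item it applies a Schur test with weights $p_n=q_n=\mu_n^{-\half}n^{-\alpha}$ to the finitely many remaining orders;
\item it estimates the $b(f,\tilde\phi_n)$ contributions separately.
\end{itemize}
\end{enumerate}

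Your interpolation route is genuinely different and, once repaired, shorter. It reduces the $V$-statement to the $H$-Riesz basis theorem of Mityagin--Siegl with no further matrix estimates. Assume, after a shift, that $0\in\rho(T)$. The chain is sound up to one step:
\begin{itemize}
\item $\{T^{-1}\phi_n\}$ is a Riesz basis of $\dom(T)$ with the graph norm. On the preasymptotic root vectors, replacing $T^{-1}\phi_n$ by $\lambda_n^{-1}\phi_n$ is a finite-dimensional change.
\item Interpolating the synthesis map $(c_n)\mapsto\sum_n c_n\phi_n$ shows that $\{|\lambda_n|^{-\half}\phi_n\}$ is a Riesz basis of $[H,\dom(T)]_{1/2}$.
\item $[H,\dom(T)]_{1/2}=\dom(T^{1/2})$, because a shift of $T$ is $m$-accretive and hence has bounded imaginary powers.
\end{itemize}

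The remaining identity $\dom(T^{1/2})=V$ does not follow from relative form bound $0$ and $m$-sectoriality. The Kato square root problem has a negative answer for general regular sectorial forms (McIntosh), and relative bound $0$ gives no rate that would rule this out.

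What makes it true here is the lower-order structure in Assumption~\ref{ass:local-subord}:
\begin{itemize}
\item From $\mu_m\gtrsim m^\gamma$ and Cauchy--Schwarz, $|b(u,v)|\le C\|A^\theta u\|\,\|A^\theta v\|$ whenever $2\alpha+2\theta\gamma>1$. Some $\theta<\half$ is admissible precisely because $2\alpha+\gamma>1$.
\item This gives $\|A^{\half}[(s+T)^{-1}-(s+A)^{-1}]\|\lesssim(1+s)^{2\theta-3/2}$.
\item By the Balakrishnan formula $T^{-1/2}=\frac{1}{\pi}\int_0^\infty s^{-1/2}(s+T)^{-1}\,ds$, the operator $A^{\half}(T^{-1/2}-A^{-1/2})$ is bounded.
\item The same argument for $T^\star$, together with duality, yields $\dom(T^{1/2})=\dom(T^{\star 1/2})=V$.
\end{itemize}

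With this lemma your second route is a complete and more conceptual proof. In the one-dimensional application the identity is classical anyway, as you note. The paper's route is self-contained and constructive, with an explicit biorthogonal system, at the price of the lengthy Schur-test bookkeeping.
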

\begin{proof}
We follow the lines of the proof of \cite[Thm.~{3.4}]{Mityagin_Siegl_19} and adapt the estimates.
We apply \cite[Thm.~VI.2.1]{Gohberg_Krein_65}, which tells us that it is sufficent to to verify 
that $\{ \mu_n^{-\half}\phi_n\}_{n\in\mathbb{N}}$ is complete in $V$, 
that there exists a $V$-complete system $\{ \hat\phi_n\}_{n\in\mathbb{N}}$ that is $V$-biorthogonal to $\{ \mu_n^{-1/2}\phi_n\} _{n\in\mathbb{N}}$, 
and that we have
\begin{align}
\label{eq:Gohberg_cond}
\forall f\in V, \quad \sum_{n=1}^\infty |(f,\mu_n^{-1/2}\phi_n)_V|^2 < \infty, \quad\sum_{n=1}^\infty |(f,\hat\phi_n)_V|^2 < \infty.
\end{align}
We observe that it is sufficent to prove the assumptions for all $n>N$ with arbitrary $N>0$.
Let $\{ \tilde\phi_n\}_{n\in\mathbb{N}}$ be as in \cite{Mityagin_Siegl_19}, i.e.,
\begin{align*}
\tilde\phi_n:=\frac{1}{(\phi_n^*,\phi_n)}\phi_n^*, \quad \phi_n^*:=\frac{1}{2\pi i}\int_{\Gamma_n} (z-T^*)^{-1}\psi_n\, dx, \quad n>N,
\end{align*}
i.e., the scaled eigenvectors of $T^*$ and $N$ as in \cite[Thm.~{3.2}]{Mityagin_Siegl_19}.
Without loss of generality we assume $N=1$ from now on.
We use $\hat\phi_n:=\frac{\mu_n^{1/2}}{\ol{\lambda_n}}A^{-1}T^*\tilde\phi_n$ for which $(\phi_n,\hat\phi_m)_V=\delta_{nm}$.
Note also that $\{ \hat\phi_n\} _{n\in\mathbb{N}}$ is $V$-complete.
We give the detailed proof for the case $2\alpha\leq1$; the other case is similar.
The estimation of the first series in~(\ref{eq:Gohberg_cond})
\begin{align*}
\sum_{n=1}^\infty |(f,\mu_n^{-\half}\phi_n)_V|^2
=\sum_{n=1}^\infty |(A^{\half}f,\phi_n)|^2
<\infty
\end{align*}
follows precisely the lines in \cite[Thm.~{3.4}]{Mityagin_Siegl_19} with $f$ replaced by $A^\half f$.
Due to the more elaborate definition of the biorthogonal system, the estimation of the second series needs more attention. We start with
\begin{align}\label{eq:est-aux-ab}
\sum_{n=1}^\infty |(f,\hat\phi_n)_V|^2
\leq 2\underbrace{\sum_{n=1}^\infty \frac{\mu_n}{|\lambda_n|^2} |(A^{\half}f,A^{\half}\tilde\phi_n)|^2}_{(\ref{eq:est-aux-ab}a):=}+2\underbrace{\sum_{n=1}^\infty \frac{\mu_n}{|\lambda_n|^2} |b(f,\tilde\phi_n)|^2}_{(\ref{eq:est-aux-ab}b):=}.
\end{align}
To estimate (\ref{eq:est-aux-ab}a) we use\ldhrev{\footnote{$ \rho_n^{*(j)} := \sum_{k=j+1}^\infty \phi_k^{*(k)}$, see \cite{Mityagin_Siegl_19}.}}
 $\phi^*_n=\psi_n+\sum_{k=1}^j \phi_n^{*(k)}+\rho_n^{*(j)}$, $j\in\mathbb{N}$ and note that $\frac{\mu_n}{|\lambda_n|}=O(1), (\phi_n^*,\phi)=O(1)$ as $n\to\infty$.
 Neglecting the difference between $\tilde{\phi}_n$ and $\phi_n^\ast$, 
we estimate\footnote{$(\sum_{i=1}^n a_i)^2 \leq n \sum_{i-1}^n a_i^2$.}
\begin{align*}
(\ref{eq:est-aux-ab}a)\leq (j+2) \sum_{n=1}^\infty \frac{\mu_n}{|\lambda_n|^2} \Big(|(A^{\half}f,A^{\half}\psi_n)|^2+
\sum_{k=1}^j|(A^{\half}f,A^{\half}\phi^{*(k)})|^2+|(A^{\half}f,A^{\half}\rho^{*(j)})|^2\Big),
\end{align*}
where $\sum_{n=1}^\infty \frac{\mu_n}{|\lambda_n|^2} |(A^{\half}f,A^{\half}\psi_n)|^2=\sum_{n=1}^\infty \frac{\mu_n^2}{|\lambda_n|^2} |(f,\mu_n^{-1/2}\psi_n)_V|^2<\infty$, because $\{\mu_n^{-1/2}\psi_n\}_{n\in\mathbb{N}}$ is a Riesz basis in $V$.
Next we choose $j$ large enough such that $2(j+1)(2\alpha+\gamma-1)>1$ and hence
\begin{align*}
\sum_{n=1}^\infty 2\frac{\mu_n}{|\lambda_n|^2} \big(|(A^{\half}f,A^{\half}\rho_n^{*(j)})|^2
\leq C \|f\|_V^2 \sum_{n=1}^\infty \|\mu_n^{-\half}\rho_n^{*(j)})\|_V^2
<\infty
\end{align*}
due to Lemma \ref{lemma:H1phi_est} (and $\rho_n^{*(j)}=\sum_{k=j+1}^\infty \phi_n^{*(k)}$).
To estimate the remaining middle terms we expand $f=\sum_{m=1}^\infty  f_m \mu_n^{-1/2}\psi_m$, where $f_m\in\mathbb{C}$ and $\sum_{m=1}^\infty |f_m|^2\ = \|f\|_V^2$.
We have
\begin{align*}
\sum_{n=1}^\infty |(A^{\half}f,A^{\half}\mu_n^{-1/2}\phi^{*(k)})|^2
=\sum_{n=1}^\infty |\sum_{m=1}^\infty f_m (\psi_m,A^{\half}\mu_n^{-1/2}\phi_n^{*(k)})|^2.
\end{align*}
Recall \eqref{eq:phink} that
$$
\phi_n^{*(k)} 
\ds = \sum_{j_1 \neq n} \frac{\ol{b(\psi_n,\psi_{j_1})}}{\mu_n - \mu_{j_1}} \, 
\sum_{j_2 \neq n} \frac{\ol{b(\psi_{j_1},\psi_{j_2})}}{\mu_n - \mu_{j_2}} \, \ldots\, 
\sum_{j_k \neq n} \frac{\ol{b(\psi_{j_{k-1}},\psi_{j_k})}}{\mu_n - \mu_{j_k}} \, \psi_{j_k},
$$
and hence
\begin{align*}
&\sum_{n=1}^\infty |\sum_{m=1}^\infty f_m (\psi_m,A^{\half}\mu_n^{-1/2}\phi_n^{*(k)})|^2\\
&=\sum_{n=1}^\infty \Big|\sum_{m\ne n} f_m \sum_{j_1 \neq n} \frac{\ol{b(\psi_n,\psi_{j_1})}}{\mu_n - \mu_{j_1}} \, 
\sum_{j_2 \neq n} \frac{\ol{b(\psi_{j_1},\psi_{j_2})}}{\mu_n - \mu_{j_2}} \, \ldots\, 
\sum_{j_{k-1} \neq n} \frac{\ol{b(\psi_{j_{k-2}},\psi_{j_{k-1}})}}{\mu_n - \mu_{j_{k-1}}} \,
\frac{\ol{b(\psi_{j_{k-1}},\psi_{m})}}{\mu_n - \mu_{m}} \frac{\mu_m^{1/2}}{\mu_n^{1/2}} \Big|^2\\  
&\leq \sum_{n=1}^\infty \bigg(\sum_{m\ne n} \vert f_m \vert
\sum_{j_1 \neq n} \frac{M_B}{n^\alpha j_1^\alpha|\mu_n - \mu_{j_1}|} \, 
\sum_{j_2 \neq n} \frac{M_B}{j_1^\alpha j_2^\alpha|\mu_n - \mu_{j_2}|} \, \ldots\, 
\sum_{j_{k-1} \neq n} \frac{M_B}{j_{k-2}^\alpha j_{k-1}^\alpha |\mu_n-\mu_{j_{k-1}}|} \,
\frac{M_B}{j_{k-1}^\alpha m^\alpha|\mu_n-\mu_{m}|} \frac{\mu_m^{1/2}}{\mu_n^{1/2}} \bigg)^2 \\
&= \sum_{n=1}^\infty \bigg(\sum_{m \ne n } \vert f_m \vert 
\Big(\sum_{j \neq n} \frac{M_B}{j^{2\alpha}|\mu_n - \mu_{j}|}\Big)^{k-1} \, 
\frac{M_B}{n^\alpha m^\alpha|\mu_n-\mu_{m}|} \frac{\mu_m^{1/2}}{\mu_n^{1/2}} \bigg)^2\\
&\leq \sum_{n=1}^\infty (C\sigma_{2\alpha,\gamma}(n))^{2(k-1)} \bigg(\sum_{m\ne n } \vert f_m \vert
\frac{M_B}{n^\alpha m^\alpha|\mu_n-\mu_{m}|} \frac{\mu_m^{1/2}}{\mu_n^{1/2}} \bigg)^2.
\end{align*}
Thus it suffices to estimate
\begin{align*}\sum_{n=1}^\infty \bigg(\sum_{m\ne n} \vert f_m\vert
\frac{M_B}{n^\alpha m^\alpha|\mu_n-\mu_{m}|} \frac{\mu_m^{1/2}}{\mu_n^{1/2}} \bigg)^2
=:\|\mathcal{M}\mathbf{f}\|_{\ell^2(\mathbb{N})}^2,
\end{align*}
where $\mathbf{f}:= \{ f_m\}_{m\in\mathbb{N}}\in\ell^2(\mathbb{N})$ and $\mathcal{M}_{nm}:=\frac{M_B}{n^\alpha m^\alpha|\mu_n-\mu_{m}|} \frac{\mu_m^{1/2}}{\mu_n^{1/2}} 
(1 - \delta_{nm})$.
We succeed, if we can show that $\mathcal{M}$ is bounded, i.e., $\mathcal{M}\in\mathcal{L}(\ell^2(\mathbb{N}))$.
To this end we apply the Schur test \cite[Thm.~{5.2}]{halmos-sunder78}
with $p_n=q_n=\frac{1}{\mu_n^{1/2}n^\alpha}$ by computing 
\begin{align*}
\sum_{m=1}^\infty |\mathcal{M}_{nm}| q_m&= \frac{1}{\mu_n^{1/2}n^\alpha} M_B \sum_{m\neq n} \frac{1}{m^{2\alpha}|\mu_m-\mu_n|}
\leq C p_n \sigma_{2\alpha,\gamma}(n) \leq C p_n, \\
\sum_{n=1}^\infty |\mathcal{M}_{nm}| p_n&= \frac{\mu_m^{1/2}}{m^\alpha} M_B \sum_{n\neq m} \frac{1}{\mu_nn^{2\alpha}|\mu_n-\mu_m|}\\
&=\frac{\mu_m^{1/2}}{m^\alpha} M_B \sum_{n\neq m} \frac{1}{\mu_mn^{2\alpha}|\mu_n-\mu_m|}
+\frac{\mu_m^{1/2}}{m^\alpha} M_B \sum_{n\neq m} \big(\frac{1}{\mu_n}-\frac{1}{\mu_m}\big)\frac{1}{n^{2\alpha}|\mu_n-\mu_m|}\\
&\leq \frac{1}{\mu_m^{1/2}m^\alpha} M_B \sum_{n\neq m} \frac{1}{n^{2\alpha}|\mu_n-\mu_m|}
+\frac{1}{\mu_m^{1/2}m^\alpha} M_B \sum_{n\neq m} \frac{1}{\mu_nn^{2\alpha}}\\
&\leq \frac{1}{\mu_m^{1/2}m^\alpha} M_B \sum_{n\neq m} \frac{1}{n^{2\alpha}|\mu_n-\mu_m|}
+\frac{1}{\mu_m^{1/2}m^\alpha} M_B \sum_{n\neq m} \frac{1}{n^{2\alpha+\gamma}}
\leq Cq_m.
\end{align*}
Hence it remains to estimate (\ref{eq:est-aux-ab}b).
Again, we use the expansion $\phi^*_n=\psi_n+\sum_{k=1}^j \phi_n^{*(k)}+\rho_n^{*(j)}$ with $j$ as previously specified and estimate
\begin{align}\label{eq:RVB-aux1}
\begin{split}
\sum_{n=1}^\infty &\frac{\mu_n}{|\lambda_n|^2}|b(f,\tilde\phi_n)|^2\\
&\leq
(j+2)\left\{ \sum_{n=1}^\infty \frac{\mu_n}{|\lambda_n|^2}|b(f,\psi_n)|^2
+\sum_{n=1}^\infty \sum_{k=1}^j\frac{\mu_n}{|\lambda_n|^2}|b(f,\tilde\phi_n^{*(k)})|^2
+ \sum_{n=1}^\infty \frac{\mu_n}{|\lambda_n|^2}|b(f,\rho_n^{*(j)})|^2 \right\}.
\end{split}
\end{align}
To estimate the last term in \eqref{eq:RVB-aux1} we note that the form $b(\cdot,\cdot)$ is $V$-bounded and thus
\begin{align*}
\sum_{n=1}^\infty \frac{\mu_n}{|\lambda_n|^2}|b(f,\rho_n^{*(j)})|^2
\leq C\sum_{n=1}^\infty \frac{\mu_n}{|\lambda_n|^2} \|f\|_V^2 \|\rho_n^{*(j)})\|_V^2
\leq C\|f\|_V^2\sum_{n=1}^\infty \|\mu_n^{-1/2}\rho_n^{*(j)})\|_V^2<\infty.
\end{align*}
Further, using $f=\sum_{m=1}^\infty f_m \mu_m^{-1/2}\psi_m$, $\mathbf{f}=\{f_m\}_{m\in\mathbb{N}}\in\ell^2(\mathbb{N})$ as previously we estimate the first term in \eqref{eq:RVB-aux1} as
\begin{align*}
\sum_{n=1}^\infty \frac{\mu_n}{|\lambda_n|^2}|b(f,\psi_n)|^2
&\leq \sum_{n=1}^\infty \frac{\mu_n}{|\lambda_n|^2} \big|\sum_{m=1}^\infty \frac{M_B f_m}{\mu_m^{1/2}m^\alpha n^\alpha}\big|^2
\leq C\sum_{n=1}^\infty \frac{1}{\mu_n n^{2\alpha}} \big|\sum_{m=1}^\infty \frac{f_m}{\mu_m^{1/2}m^\alpha}\big|^2\\
&\leq C\sum_{n=1}^\infty \frac{1}{n^{2\alpha+\gamma}} \big|\sum_{m=1}^\infty \frac{f_m}{m^{\alpha+\gamma/2}}\big|^2
\leq C \|\mathbf{f}\|_{\ell^2(\mathbb{N})} \sqrt{\sum_{m=1}^\infty \frac{1}{m^{2\alpha+\gamma}}}<\infty.
\end{align*}
For the remaining middle term in \eqref{eq:RVB-aux1} we proceed as follows:
\begin{align*}
\sum_{n=1}^\infty &\frac{\mu_n}{|\lambda_n|^2}|b(f,\tilde\phi_n^{*(k)})|^2\\
&=  \sum_{n=1}^\infty \frac{\mu_n}{|\lambda_n|^2} \Big| \sum_{m=1}^\infty f_m  \sum_{j_1 \neq n} \frac{\ol{b(\psi_n,\psi_{j_1})}}{\mu_n - \mu_{j_1}} \, 
\sum_{j_2 \neq n} \frac{\ol{b(\psi_{j_1},\psi_{j_2})}}{\mu_n - \mu_{j_2}} \, \ldots\, 
\sum_{j_k \neq n} \frac{\ol{b(\psi_{j_{k-1}},\psi_{j_k})}}{\mu_n - \mu_{j_k}}b(\mu_m^{-1/2}\psi_m,\psi_{j_k}) \Big|^2\\
&\leq \sum_{n=1}^\infty \frac{\mu_n}{|\lambda_n|^2} \left( \sum_{m=1}^\infty \vert f_m \vert \sum_{j_1 \neq n} \frac{M_B}{n^\alpha j_1^\alpha|\mu_n-\mu_{j_1}|} \, 
\sum_{j_2 \neq n} \frac{M_B}{j_{1}^\alpha j_{2}^\alpha|\mu_n-\mu_{j_2}|} \, \ldots\, 
\sum_{j_k \neq n} \frac{M_B}{j_{k-1}^\alpha j_k^\alpha|\mu_n-\mu_{j_k}|}
\frac{M_B}{\mu_m^{1/2}m^\alpha j_k^\alpha}\right)^2\\
&=\sum_{n=1}^\infty \frac{\mu_n}{|\lambda_n|^2} \left(
\sum_{m=1}^\infty \vert f_m \vert
\Big(\sum_{j \neq n} \frac{M_B}{j^{2\alpha}|\mu_n-\mu_{j}|}\Big)^k 
\frac{M_B}{\mu_m^{1/2}m^\alpha n^\alpha}\right)^2\\
&\leq C\sum_{n=1}^\infty \frac{\sigma_{2\alpha,\gamma}(n)^k}{\mu_n n^{2\alpha}} \left(
\sum_{m=1}^\infty \vert f_m \vert
\frac{M_B}{m^{\alpha+\gamma/2}}\right)^2
\leq C \|\mathbf{f}\|_{\ell(\mathbb{N})}^2 \big( \sum_{m=1}^\infty \frac{1}{m^{2\alpha+\gamma}} \big)^2<\infty.
\end{align*}
Hence the proof is complete. 
\end{proof}

\section{Properties of Jordan Chains}
\label{appendix:3}
In this section, we assume that $(H, (\cdot,\cdot)_H)$ is a Hilbert space and $A:\dom(H) \rightarrow H$ a densely defined 
closed, sectorial linear operator
with compact resolvent, i.e., $(A-z_0 \operatorname{I})^{-1}: H \rightarrow H$ is a compact operator for some $z_0 \in \doubleIC$. 
Then the spectrum $\sigma(A) \subset \doubleIC$ consists of isolated points only. 
The adjoint is denoted $A^\star$ with spectrum $\sigma(A^\star) = \overline{\sigma(A)}$. 
The Riesz projector for $A$ and $\lambda \in \doubleIC$ is defined as 
$$
P_\lambda(A) = -\frac{1}{2\pi i} \int_{\mathcal C} R(\zeta)\, d\zeta, 
\qquad R(\zeta) = (A - \zeta \operatorname{I})^{-1}, 
$$
where ${\mathcal C}$ is a positively oriented curve enclosing $\lambda$ and no point of $\sigma(A)\setminus \{\lambda\}$. 
We denote $H_\lambda(A):= \operatorname{Ran} P_{\lambda}(A) := P_{\lambda}(A) H$. We have, see, e.g., 
\cite{Gohberg_Krein_65} or \cite{Demkowicz_24}: 
\begin{itemize}
\item
$\operatorname{dim} H_\lambda(A) = \operatorname{dim} H_{\overline{\lambda}} (A^\star) < \infty$ for all $\lambda \in \sigma(A)$
\item
$P_\lambda(A) P_{\lambda'}(A) = 0$ for $\lambda \ne \lambda'$
\item
$P_\lambda(A)^\star = P_{\overline{\lambda}}(A^\star)$ (see, e.g.,  \cite[Cor.~1.12]{Demkowicz_24})
\end{itemize}
\begin{lemma}
\label{lemma:m-1}
It holds that $\operatorname{dim} H_\lambda(A) = \operatorname{dim} H_{\overline{\lambda}}(A^\star) < \infty$.
For $\lambda \ne \lambda'$ it holds that $H_\lambda(A) \perp_H H_{\overline{\lambda'}}(A^\star)$.
\end{lemma}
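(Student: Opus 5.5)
The plan is to derive both claims from the three listed properties of Riesz projectors. The key tool is the identity $P_\lambda(A)^\star = P_{\overline{\lambda}}(A^\star)$, combined with the mutual annihilation $P_\lambda(A)P_{\lambda'}(A)=0$ for $\lambda\ne\lambda'$. Both statements then reduce to short algebraic manipulations with these projectors.

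\emph{Equality of dimensions.} For any bounded operator $P$ on $H$ the ranges of $P$ and $P^\star$ have the same dimension whenever one of them is finite. This holds because $\operatorname{Ran}P^\star$ is dense in $(\ker P)^\perp$, and $P$ maps $(\ker P)^\perp$ injectively onto $\operatorname{Ran}P$; hence $\operatorname{Ran}P^\star$ is finite-dimensional, therefore closed, and has the same dimension as $\operatorname{Ran}P$.

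Finiteness of $\operatorname{dim} H_\lambda(A)$ comes from compactness of the resolvent. The projector $P_\lambda(A)$ commutes with $(A-z_0)^{-1}$, and on $H_\lambda(A)$ the compact operator $(A-z_0)^{-1}$ has the single spectral value $(\lambda-z_0)^{-1}\neq 0$. A compact operator restricted to an invariant subspace on which it is invertible forces that subspace to be finite-dimensional. Applying the dimension argument with $P=P_\lambda(A)$ and $P^\star=P_{\overline\lambda}(A^\star)$ then gives the first claim.

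\emph{Orthogonality.} Let $x\in H_\lambda(A)$ and $y\in H_{\overline{\lambda'}}(A^\star)$. Since projectors fix their ranges, $x=P_\lambda(A)x$ and $y=P_{\overline{\lambda'}}(A^\star)y = P_{\lambda'}(A)^\star y$. Therefore
\begin{align*}
(x,y)_H = (P_\lambda(A)x, P_{\lambda'}(A)^\star y)_H = (P_{\lambda'}(A)P_\lambda(A)x, y)_H = 0,
\end{align*}
where the last step uses $P_{\lambda'}(A)P_\lambda(A)=0$ for $\lambda\ne\lambda'$.

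The only step needing care is the adjoint identity $P_\lambda(A)^\star=P_{\overline\lambda}(A^\star)$, which the paper cites from the literature. If one wanted to verify it directly, the route is to take adjoints inside the contour integral: $R_A(\zeta)^\star = R_{A^\star}(\overline\zeta)$, and conjugation reverses the orientation of the contour, which is compensated by conjugating the factor $-1/(2\pi i)$. Since the paper explicitly lists this identity among the cited facts, I would simply invoke it.
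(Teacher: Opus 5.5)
Your proof is correct, and the orthogonality part is exactly the paper's argument: write $y=P_{\overline{\lambda'}}(A^\star)y=P_{\lambda'}(A)^\star y$ and use $P_{\lambda'}(A)P_\lambda(A)=0$. For the dimension statement the paper only cites the literature, whereas you give a valid self-contained argument (finite rank of $P_\lambda(A)$ from the compact resolvent, then equal rank of $P_\lambda(A)$ and its adjoint $P_{\overline{\lambda}}(A^\star)$); this fills in a cited fact rather than taking a different route.
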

\begin{proof}
For the first statement we refer to \cite[Cor. 1.12]{Demkowicz_24}.
The second statement follows from 
$P_{\lambda}(A) P_{\lambda'}(A) = 0$: for $x \in H_\lambda$, $x' \in H_{\lambda'}$ we have
$(x,x')_H = (P_{\lambda}(A) x, P_{\overline{\lambda'}}(A^\star) x')_H = (P_{\lambda'}(A) P_{\lambda} (A) x,x')_H $ $=$ $ (0,x')_H  =0$. 
%
\end{proof}
\begin{lemma}
\label{lemma:m-3}
Assume that the spaces $\{H_\lambda(A)\colon \lambda \in \sigma(A)\}$ are complete in $H$.
Then, given a basis $\{x_j\}_{j=1}^N$ of $H_{\lambda}(A)$ there exists a (unique) biorthogonal (w.r.t.\ to $(\cdot,\cdot)_H)$) 
basis $\{x_j^\prime\}_{j=1}^N$
of $H_{\overline{\lambda}}(A^\star)$. 
\end{lemma}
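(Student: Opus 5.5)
The plan is to reduce the statement to a finite-dimensional nondegeneracy question for the sesquilinear pairing $(\cdot,\cdot)_H$ restricted to $H_\lambda(A)\times H_{\overline{\lambda}}(A^\star)$. By Lemma~\ref{lemma:m-1} both spaces have the same finite dimension $N$. Once the pairing is shown to be nondegenerate, the existence and uniqueness of $\{x_j'\}_{j=1}^N$ follow from linear algebra.

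\emph{Step 1: linear algebra.} Fix a basis $\{y_k\}_{k=1}^N$ of $H_{\overline{\lambda}}(A^\star)$ and form the Gram matrix $G_{jk}:=(x_j,y_k)_H$. We seek $x_j'=\sum_k a_{jk}y_k$ with $(x_i,x_j')_H=\delta_{ij}$. Because the pairing is antilinear in the second argument, this reads $G\,\overline{a}^{\,T}=I$. It therefore suffices to show that $G$ is invertible; then $a=\overline{G^{-1}}^{\,T}$ gives the unique solution. Uniqueness holds because any two biorthogonal systems inside $H_{\overline{\lambda}}(A^\star)$ would differ by elements orthogonal to all $x_i$, and these vanish by invertibility of $G$. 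Linear independence of the $x_j'$ is automatic from biorthogonality, and since there are $N$ of them they form a basis.

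\emph{Step 2: nondegeneracy, the main point.} Since $G$ is square, it is enough to show that $G$ has trivial kernel in one direction. So suppose $y\in H_{\overline{\lambda}}(A^\star)$ satisfies $(x,y)_H=0$ for all $x\in H_\lambda(A)$. For any $\lambda'\neq\lambda$, Lemma~\ref{lemma:m-1} (applied with the roles $\lambda'$ and $\lambda$) gives $H_{\lambda'}(A)\perp_H H_{\overline{\lambda}}(A^\star)$, so $y$ is also orthogonal to every $H_{\lambda'}(A)$. Hence $y$ is orthogonal to the union of all root spaces $H_{\lambda'}(A)$, $\lambda'\in\sigma(A)$. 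By the completeness assumption their linear span is dense in $H$, so $y=0$. Therefore $\overline{G}^{\,T}$ has trivial kernel, and so $G$ is invertible.

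The main obstacle is Step~2, and in particular the need to use the cross orthogonality for all $\lambda'\ne\lambda$ together with completeness. I will take care to apply Lemma~\ref{lemma:m-1} with the correct conjugation, namely orthogonality of $H_{\lambda'}(A)$ to $H_{\overline{\lambda}}(A^\star)$. If desired, one can alternatively argue via $P_\lambda(A)^\star=P_{\overline{\lambda}}(A^\star)$: writing $y=P_{\overline{\lambda}}(A^\star)y$ gives $(h,y)_H=(P_\lambda(A)h,y)_H=0$ for all $h\in H$, which yields $y=0$ even without completeness. Step~2 keeps the completeness-based argument, in line with the hypothesis of the lemma.
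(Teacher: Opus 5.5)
Your proof is correct and follows the paper's argument: invertibility of the Gram matrix between $H_\lambda(A)$ and $H_{\overline{\lambda}}(A^\star)$ via the cross-orthogonality of Lemma~\ref{lemma:m-1} plus completeness, then inverting it to build the biorthogonal basis. Your side remark is also valid: using $P_\lambda(A)^\star = P_{\overline{\lambda}}(A^\star)$ shows the pairing is nondegenerate without the completeness hypothesis.
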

\begin{proof}
Let $\{y^\prime_j\}_{j=1}^N$ be a basis of $H_{\overline{\lambda}}(A^\star)$.

\emph{Claim:} The Gram matrix $G_{ij}:= (x_i,y^\prime_j)_H$ is invertible. 
Suppose otherwise. Then there exists an $\alpha \in {\mathbb C}^N\setminus \{0\}$ with $G \alpha = 0$. Hence, 
$0 \ne y^\prime:= \sum_{k=1}^N \alpha_k y^\prime_k \in H_{\overline{\lambda}}(A^\star)$ satisfies $(x,y^\prime)_H = 0$ for all $x \in H_\lambda (A)$. 
{}From Lemma~\ref{lemma:m-1}, we conclude $y^\prime \perp_H H_{\lambda'}(A)$ for all $\lambda' \in \sigma(A)$. By the completeness
of $\{H_\lambda(A)\colon \lambda \in \sigma(A)\}$ in $H$, we get $y^\prime = 0$, a contradiction. 

\emph{Construction of $\{x^\prime_j\}_{j=1}^N$:} Making the ansatz $x^\prime_j = \sum_{k} \beta_{kj} y^\prime_k$ 
for suitable matrix $\beta \in {\mathbb C}^{N \times N}$, the biorthogonality 
requirement leads to 
\begin{align*}
\delta_{ij} &\stackrel{!}{ = } (x_i,x^\prime_j)_H = \sum_k \overline{\beta}_{kj}  (x_i,y^\prime_k)_H = \sum_{k} G_{ik} \overline{\beta}_{kj} 
 = (G \overline{\beta})_{ij}
\end{align*}
Hence, the choice $\beta = \overline{G}^{-1}$ yields the desired biorthogonal set. 
\end{proof}
\begin{lemma}
\label{lemma:m-4}
For $\lambda \ne \lambda'$ there holds 
$$
(A x_\lambda,x^\prime_{\lambda'})_H = 0 \qquad 
\forall x_\lambda \in H_\lambda(A), \quad x^\prime_{\lambda'} \in H_{\overline{\lambda'}}(A^\star). 
$$
\end{lemma}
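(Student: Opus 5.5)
The plan is to reduce the statement to the biorthogonality of Lemma~\ref{lemma:m-1}, using that each root space is invariant under $A$.

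\emph{Step 1: $A$ preserves $H_\lambda(A)$.} The Riesz projector $P_\lambda(A)$ commutes with the resolvent $R(\zeta)$, hence with $A$ on $\dom(A)$. Moreover $H_\lambda(A)$ is finite-dimensional and lies in $\dom(A)$, because $P_\lambda(A)$ maps $H$ into $\dom(A)$. Thus for $x_\lambda \in H_\lambda(A)$ we get
\[
A x_\lambda = A P_\lambda(A) x_\lambda = P_\lambda(A) A x_\lambda \in H_\lambda(A).
\]
An even more elementary route: by definition of the root space, $(A-\lambda)^k x_\lambda = 0$ for some $k$. Since $(A-\lambda)$ maps the root space into itself, so does $A = (A-\lambda) + \lambda$.

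\emph{Step 2: apply orthogonality.} Set $y := A x_\lambda \in H_\lambda(A)$. Lemma~\ref{lemma:m-1} gives $H_\lambda(A) \perp_H H_{\overline{\lambda'}}(A^\star)$ for $\lambda \ne \lambda'$. Hence
\[
(A x_\lambda, x'_{\lambda'})_H = (y, x'_{\lambda'})_H = 0 .
\]
Alternatively, one can argue symmetrically by moving $A$ to the other side: $(A x_\lambda, x'_{\lambda'})_H = (x_\lambda, A^\star x'_{\lambda'})_H$, and $A^\star x'_{\lambda'} \in H_{\overline{\lambda'}}(A^\star)$ by the same invariance argument applied to $A^\star$.

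\emph{Main obstacle.} The only delicate point is the domain issue in Step~1, namely justifying that $H_\lambda(A) \subset \dom(A)$ and that $A P_\lambda(A) = P_\lambda(A) A$ on $\dom(A)$. I would handle it via the standard identity $A R(\zeta) = I + \zeta R(\zeta)$. Integrating this over $\mathcal C$ shows that $A P_\lambda(A)$ is bounded, equal to $-\frac{1}{2\pi i}\int_{\mathcal C} \zeta R(\zeta)\,d\zeta$. Closedness of $A$ then permits interchanging $A$ with the contour integral. Everything else is immediate from Lemma~\ref{lemma:m-1}.
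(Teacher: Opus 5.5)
Your proposal is correct and follows the same route as the paper. The paper's proof consists of exactly your two ingredients: the $A$-invariance of $H_\lambda(A)$ and the orthogonality $H_\lambda(A) \perp_H H_{\overline{\lambda'}}(A^\star)$ from Lemma~\ref{lemma:m-1}. Your domain justification ($H_\lambda(A)\subset \dom(A)$ and $A P_\lambda(A) = P_\lambda(A) A$ on $\dom(A)$) fills in details the paper leaves implicit.
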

\begin{proof}
Use Lemma~\ref{lemma:m-1} and the fact that $H_\lambda(A)$ is an $A$-invariant subspace. 
\end{proof}
\begin{lemma}
\label{lemma:m-5}
Let the basis $\{x_j\}_{j=1}^N$ of $H_\lambda(A)$ consist of $m \in \doubleIN$ Jordan chains, i.e., 
there are $m$ distinct values $j_1,\ldots,j_m \in \{1,\ldots,N\}$ with
$A x_{j_l} = \lambda x_{j_l}$, $l=1,\dots,m$
for the remaining $j \in \{1,\ldots,N\}\setminus \{j_1,\ldots,j_m\}$ there holds 
$A x_j = \lambda x_j + x_{j-1}$. 
Then, the corresponding biorthogonal 
sequence $\{x^\prime_j\}_{j=1}^N$ given by Lemma~\ref{lemma:m-3} consists of $m$ Jordan chains 
for the operator $A^\star$ if one reverses the order, i.e., for each $j \in \{1,\ldots,N\}$ there holds  
$A^\star x_j = \overline{\lambda} x_j$ or $A^\star x_j = \overline{\lambda} x_j + x_{j+1}$. 
The lengths of the Jordan chains of $A|_{H_\lambda(A)}$ and $A^\star|_{H_{\overline{\lambda}}(A^\star)}$ are 
in one-to-one correspondence, i.e., $x_{j_0}, \ldots, x_{j_0+n}$ is a Jordan chain for $A$, then 
$x^\prime_{N-(j_0+n)},\ldots,x^\prime_{N-j_0}$ is a Jordan chain for $A^\star$. 
The matrix 
$$
{\mathbf A}_{ij}:= (A x_j,x^\prime_i)_H, \qquad i,j=1,\ldots,N
$$
consists of $m$ Jordan blocks, all $\lambda$ on the diagonal. 
In particular, ${\mathbf A}$ is upper triangular. 
\end{lemma}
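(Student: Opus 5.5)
The plan is to read everything off the matrix of $A$ in the pair of bases $\{x_j\}$, $\{x^\prime_i\}$ supplied by Lemma~\ref{lemma:m-3}, and then transfer that matrix to $A^\star$ by taking adjoints inside the finite-dimensional space $H_{\overline{\lambda}}(A^\star)$.

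\emph{Step 1: the matrix $\mathbf A$.} Root vectors of $A$ lie in $\dom(A)$, since $H_\lambda(A)$ is the range of a Riesz projector. Hence $\mathbf A_{ij}=(Ax_j,x^\prime_i)_H$ is well defined. By hypothesis, $Ax_j=\lambda x_j+\epsilon_j x_{j-1}$, where $\epsilon_j=0$ if $j\in\{j_1,\dots,j_m\}$ and $\epsilon_j=1$ otherwise. Biorthogonality $(x_j,x^\prime_i)_H=\delta_{ij}$ then gives
\begin{align*}
\mathbf A_{ij}=\lambda\delta_{ij}+\epsilon_j\delta_{i,j-1}.
\end{align*}
So $\mathbf A$ is upper bidiagonal, with $\lambda$ on the diagonal and ones on the superdiagonal except at the positions where a new chain starts. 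It therefore consists of $m$ Jordan blocks whose sizes are exactly the chain lengths, which proves the last assertion.

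\emph{Step 2: the action of $A^\star$ on $x^\prime_i$.} The space $H_{\overline\lambda}(A^\star)$ is the range of $P_{\overline\lambda}(A^\star)=P_\lambda(A)^\star$. It is finite-dimensional, contained in $\dom(A^\star)$, and $A^\star$-invariant, because the Riesz projector commutes with the resolvent. Lemma~\ref{lemma:m-3} shows that $\{x^\prime_i\}$ is a basis of this space. We may therefore write
\begin{align*}
A^\star x^\prime_i=\sum_k c_{ki}x^\prime_k .
\end{align*}
Testing with $x_k$ and using biorthogonality gives
\begin{align*}
\overline{c_{ki}}=(x_k,A^\star x^\prime_i)_H=(Ax_k,x^\prime_i)_H=\mathbf A_{ik},
\end{align*}
so that $c_{ki}=\overline{\mathbf A_{ik}}$. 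Substituting Step~1 yields
\begin{align*}
A^\star x^\prime_i=\overline{\lambda}x^\prime_i+\epsilon_{i+1}x^\prime_{i+1},
\end{align*}
with $\epsilon_{N+1}:=0$. Thus each $x^\prime_i$ is either an eigenvector of $A^\star$ or satisfies $A^\star x^\prime_i=\overline\lambda x^\prime_i+x^\prime_{i+1}$. This is the claimed Jordan chain structure with the order reversed.

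\emph{Step 3: correspondence of chains.} Let $x_{j_0},\dots,x_{j_0+n}$ be a chain of $A$, so that $\epsilon_{j_0}=0$, $\epsilon_{j_0+1}=\dots=\epsilon_{j_0+n}=1$, and $\epsilon_{j_0+n+1}=0$. By Step~2, $x^\prime_{j_0+n}$ is an eigenvector of $A^\star$, and $x^\prime_{j_0+n-1},\dots,x^\prime_{j_0}$ successively generate a chain of the same length $n+1$ when read in reverse. Relabelling the dual basis in reversed order, $i\mapsto N+1-i$ (or $N-i$ in the statement's convention), turns this into the index set $x^\prime_{N-(j_0+n)},\dots,x^\prime_{N-j_0}$ stated in the lemma. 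Consequently the number of chains is $m$ for both operators, and their lengths correspond one-to-one.

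The only point needing care, and the one I expect to be the main obstacle, is Step~2. There one must justify that $H_{\overline\lambda}(A^\star)\subset\dom(A^\star)$ is $A^\star$-invariant, and that the $x^\prime_i$ span it rather than merely lying in it. The first follows because the Riesz projector maps into the domain and commutes with $A^\star$. The second is Lemma~\ref{lemma:m-3} together with the dimension equality in Lemma~\ref{lemma:m-1}. The remaining steps are index bookkeeping.
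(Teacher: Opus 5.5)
Your proposal is correct and follows essentially the same route as the paper. You compute $\mathbf A_{ij}=\lambda\delta_{ij}+\epsilon_j\delta_{i,j-1}$ from biorthogonality and transfer it to $A^\star$ via $(A^\star x^\prime_j,x_i)_H=\overline{\mathbf A_{ji}}$; your Step~2 usefully makes explicit what the paper's one-line argument leaves implicit, namely that $H_{\overline{\lambda}}(A^\star)\subset\dom(A^\star)$ is $A^\star$-invariant and spanned by the $x^\prime_i$, so these coefficients actually determine $A^\star x^\prime_i$.
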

\begin{proof}
Let $x_{j_0},\ldots,x_{j_0+n}$ be a Jordan chain, i.e., 
$A x_{j_0} = \lambda x_{j_0}$, 
$A x_{j_0+j} = \lambda x_{j_0+j} + x_{j_0+j-1}$ for $j =1,\ldots,n$, and 
$A x_{j_0+n} \ne \lambda x_{j+0+n} + x_{j_0 +n-1}$. 

\emph{Case a:} $j = j_0$. Then $x_{j_0}$ is an eigenvector of $A$ and with the biorthogonality 
\begin{align*}
(A x_j,x^\prime_i)_H & = (\lambda x_j, x^\prime_i)_H = \lambda \delta_{ji} 
\end{align*}
\emph{Case b:} $j_0 < j \leq j_0 + n$. Then 
\begin{align*}
(A x_{j}, x^\prime_i)_H & = 
(\lambda x_{j} + x_{j-1}, x^\prime_i)_H  = 
 \lambda \delta_{j,i} + \delta_{j-1,i} 
\end{align*}
This shows that the matrix ${\mathbf A}$ has the desired bidiagonal and hence in particular upper triangular structure. 

For $A^\star$, we note that the matrix $({\mathbf A}^\star)_{ij}:= (A^\star x^\prime_j, x_i)_H$ is given by 
\begin{align*}
{\mathbf A}^\star_{ij} = (A^\star x^\prime_j, x_i)_H = 
\overline{(x_i, A^\star x^\prime_j)_H} = 
\overline{(A x_i, x^\prime_j)_H} = \overline{{\mathbf A}_{ji}}. 
\end{align*}
This implies that the vectors $(x_i^\prime)_{i=1}^N$ correspond to Jordan chains for $A^\star$ after reversing the order. 
\end{proof}

%
%

\section{Are Jordan Chains Really Possible ?
\label{appendix:1}
}
\subsection{Jordan Chains In The Presence Of Complex Impedance}
To investigate the possibility of the presence of Jordan chains, let us consider the algebraically more tractable case of 
a straight waveguide with impedance boundary condition
investigated in \cite{Demkowicz_Gopalakrishnan_Heuer_24}.  For simplicity, we assume that the waveguide width $a = 1$. The eigenvectors are of the form
\be
\cos z x ,\quad x \in (0,1)
\label{eq:straight_waveg_eigenmode}
\ee
where $z = \sqrt{\lambda -1}$ ,  with $\lambda$ denoting the complex eigenvalue.

Let $(\lambda, \phi)$ be an eigenpair.
As discussed in Section~\ref{appendix:1-example} below, 
 the presence of Jordan chains is equivalent to the condition
$$
\int_0^1 \phi^2 = 0 \, .
$$
The first question thus is whether it is possible to find $z$ such that
$$
\int_0^1 \cos^2 zx \, dx = 0 \, .
$$
In other words, does the equation above have complex roots $z$?.
Computing the integral, we get:
$$
2 \int_0^1 \cos^2 zx\, dx = \int_0^1 (1 + \cos 2 z x) \, dx = 1 + \frac{1}{2z} \sin 2zx \big\vert_0^1 = 1 + \frac{ \sin 2z}{2z }  = 0 \, .
$$
Note that $z = 0$ does not solve the equation, so we can assume $z \ne 0$.
We have thus the equation:
\be
\sin 2z + 2z = 0  \quad \text{or} \quad \sin z + z = 0  \quad (z \leftarrow 2z) \, .
\label{eq:root1}
\ee
Fig~\ref{fig:sinz+z} presents a {\tt Matlab} plot\footnote{All pictures are courtesy of Jonathan Zhang.} of 
the function $\ln ( \vert \sin z + z \vert)$ in the domain $(-20,20) \times (-10,10)$ and a zoom (with an increased resolution) into $(-5,5)\times (-5,5)$. The dark blue dots represent roots of the equations above.
Clearly, we have multiple roots.
\begin{figure} [h!]  
\centering
\includegraphics[angle=0,width=.35\textwidth]{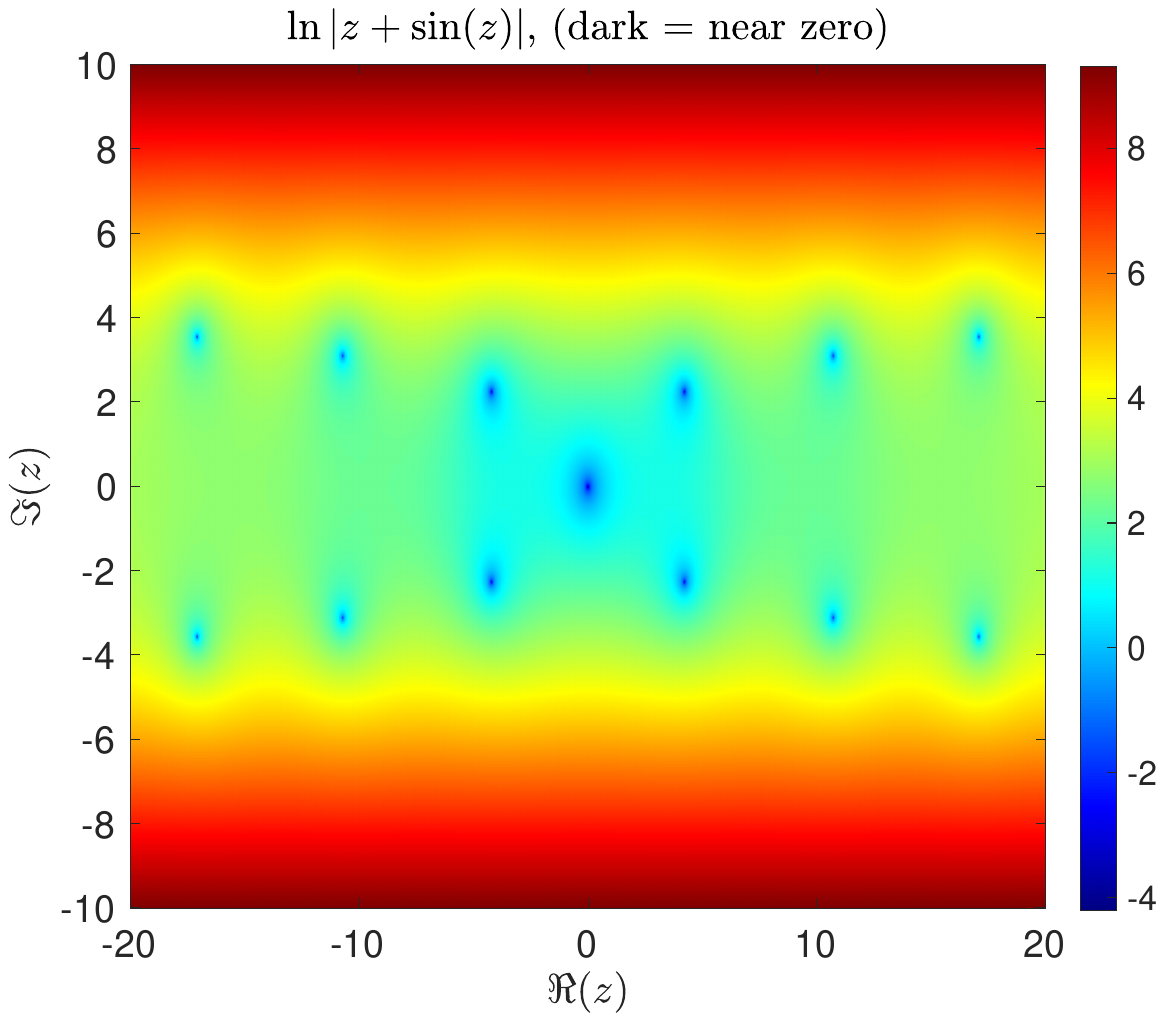}
\includegraphics[angle=0,width=.35\textwidth]{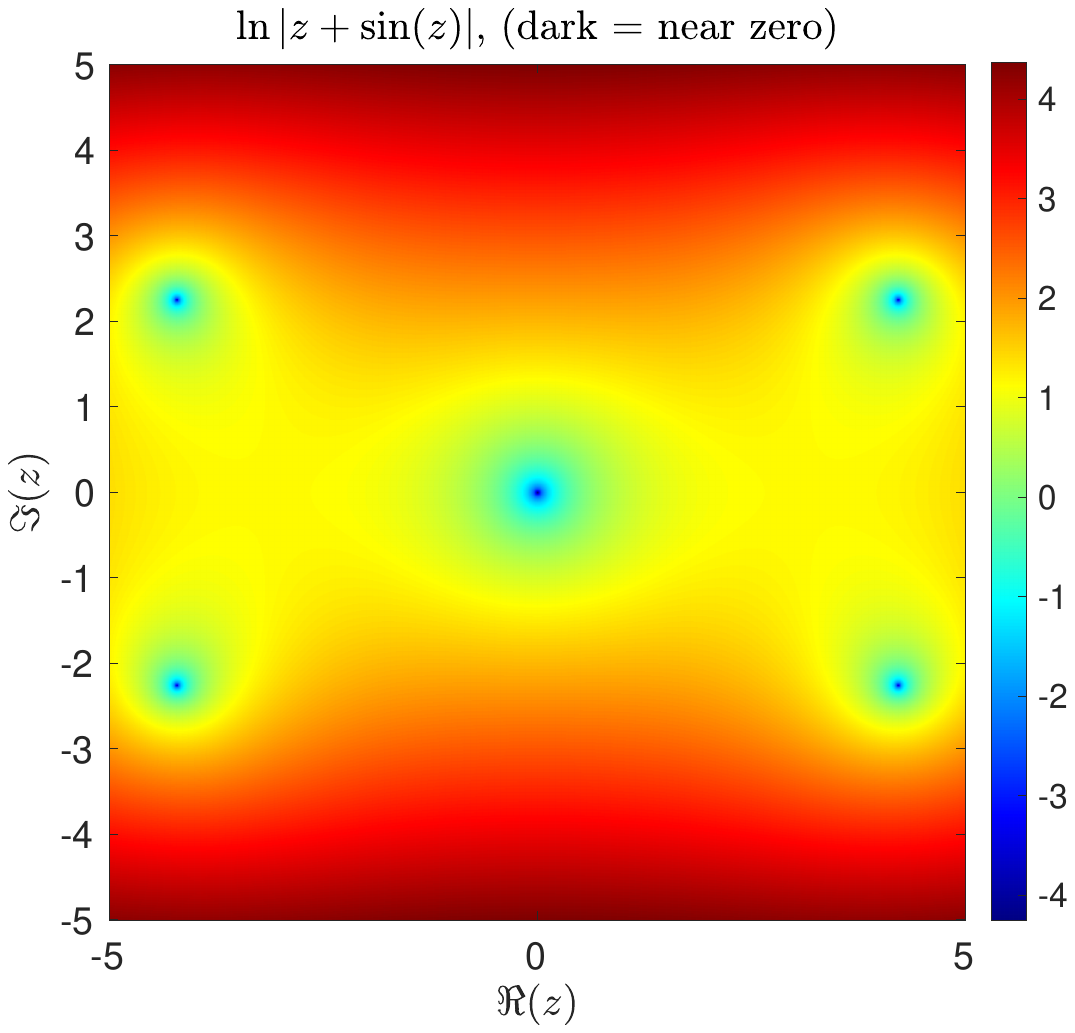}
\caption{Plot of $\vert \sin z + z \vert$ using a logarithmic scale.}
\label{fig:sinz+z}
\end{figure}
Alternatively, the first two plots in Fig.~\ref{fig:ReIm(sinz+z)} show zero contours of the real and imaginary parts of $\sin z + z$. The third one displays both families (the zero contours
of the product of the functions). The intersection points between the two families of curves correspond to zeros of function $\sin z + z$, cf.~Fig.\ref{fig:sinz+z}.
\begin{figure} [h!]  
\centering
\includegraphics[angle=0,width=.4\textwidth]{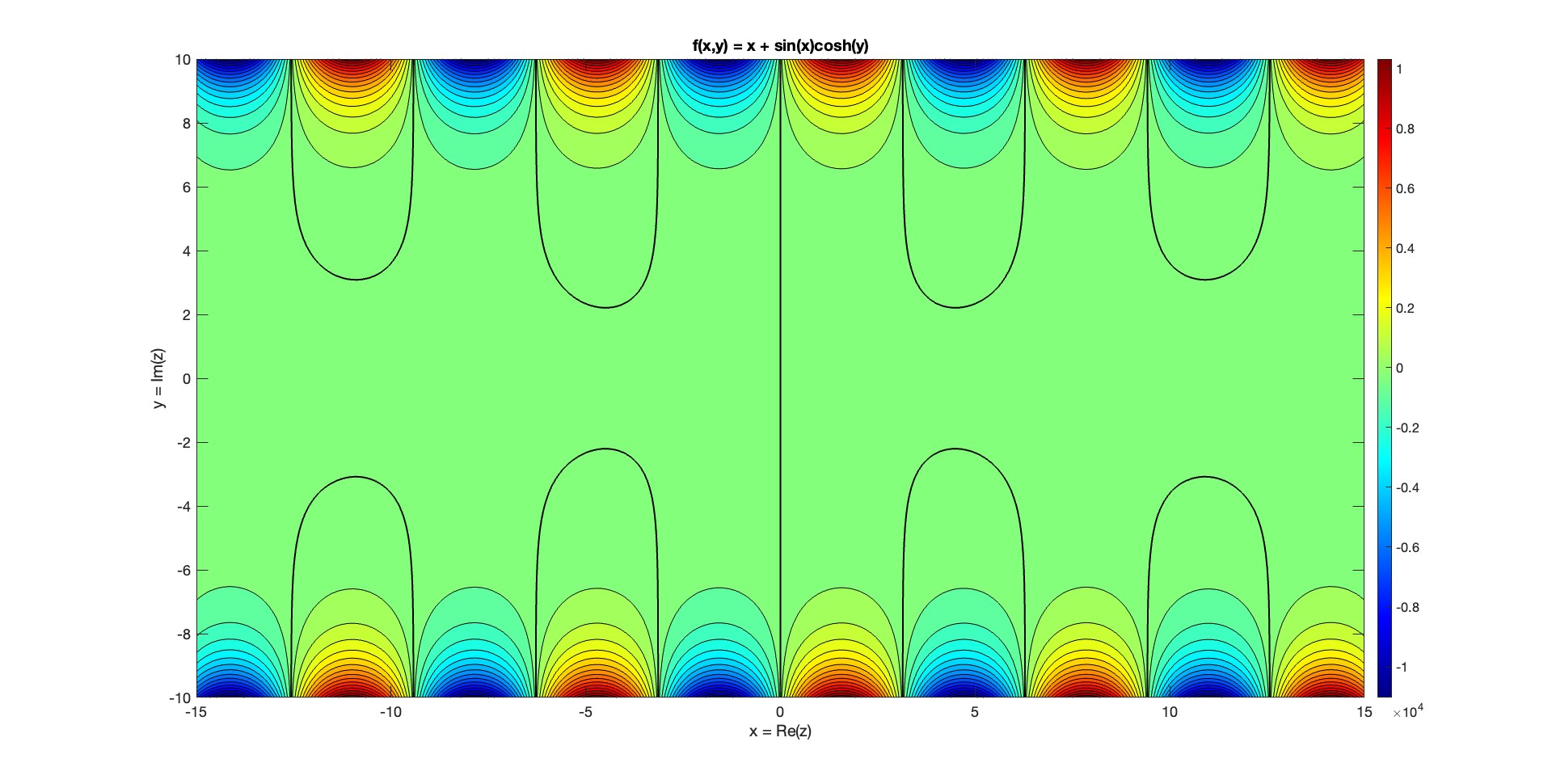}
\includegraphics[angle=0,width=.4\textwidth]{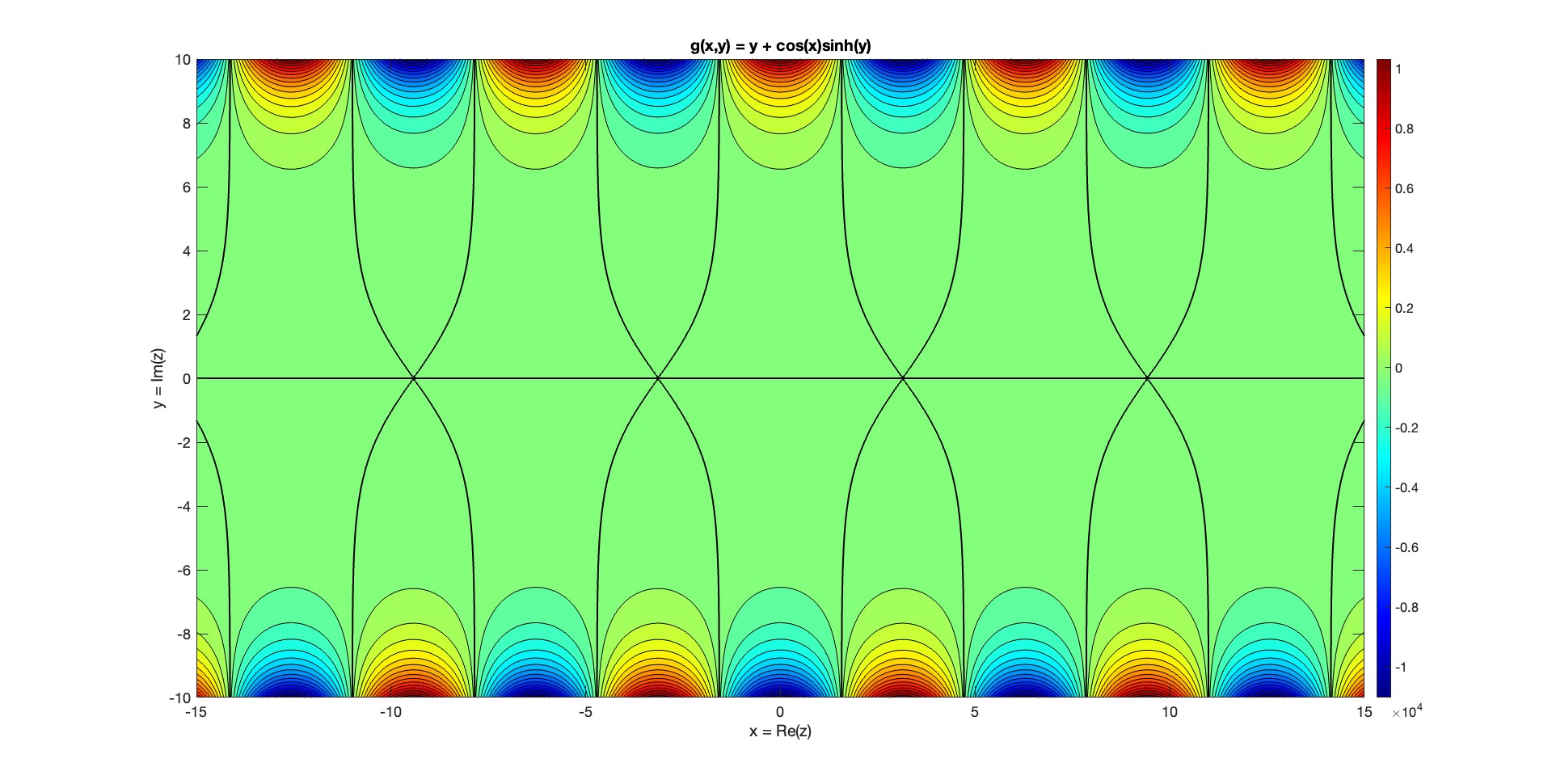}
\includegraphics[angle=0,width=.4\textwidth]{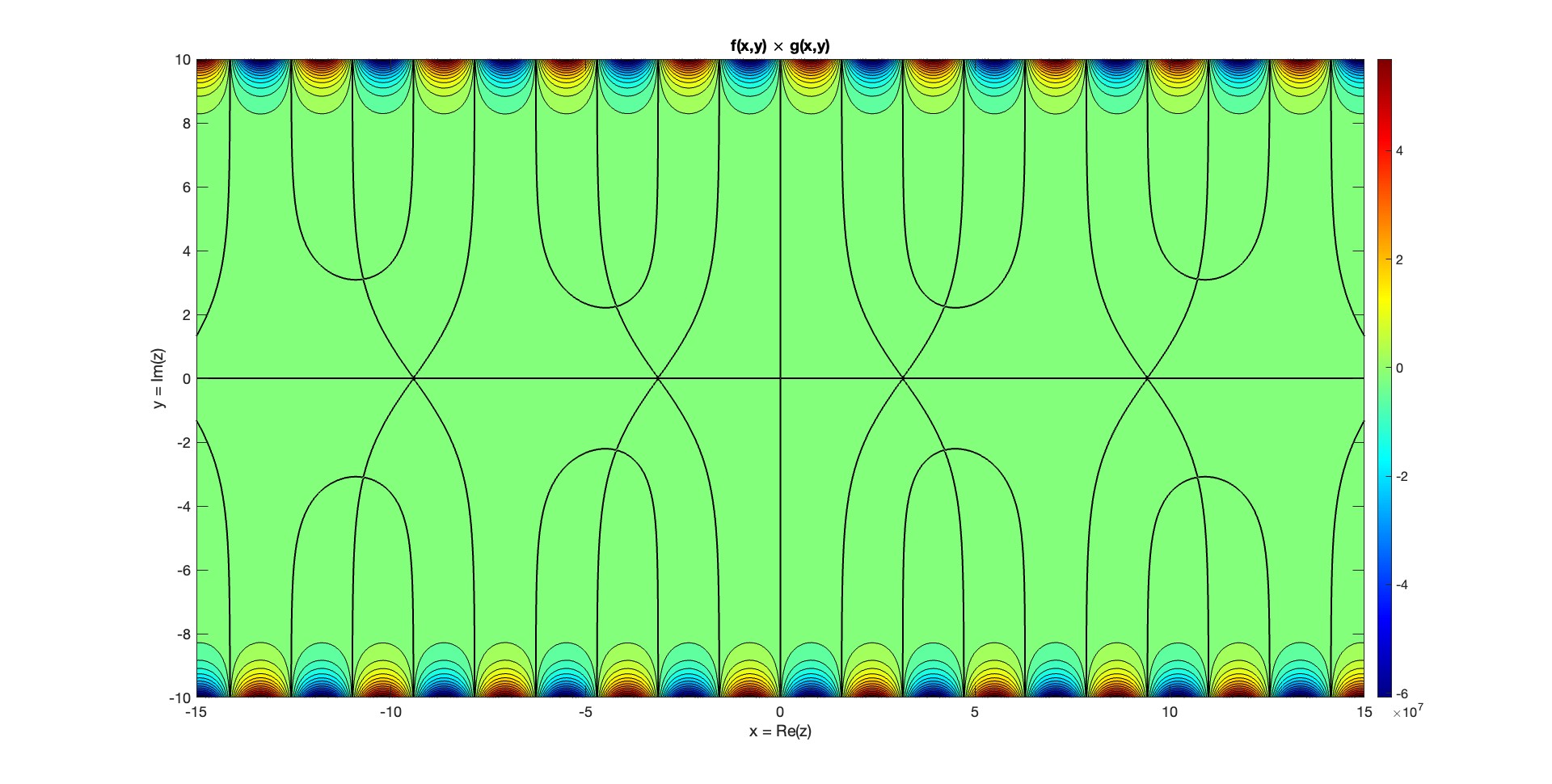}
\caption{Zero contours of $\Re (\sin z + z), \Im (\sin z + z)$ and the product of the two functions.}
\label{fig:ReIm(sinz+z)}
\end{figure}

Plugging the formula~(\ref{eq:straight_waveg_eigenmode}) into the impedance BC, we obtain (cf.~\cite{Demkowicz_Gopalakrishnan_Heuer_24}) another equation for $z$:
\be
z \tan z = i \omega \impedance \, .
\label{eq:root2}
\ee
Can we find a value for $z$ that solves both~(\ref{eq:root1}) and~(\ref{eq:root2}) ?
We can argue that such a situation is possible. Take any root of~(\ref{eq:root1}), substitute it into~(\ref{eq:root2}), and simply solve for the (complex) impedance constant $\impedance$.
Thus, at least with a complex impedance constant $\impedance$, we may run into a Jordan chain. 
\subsection{An Example of Jordan Chains Of Length $1$}
\label{appendix:1-example}
Recall that the first generalized eigenvector $\phi_1$ in a Jordan chain for an operator $T$ corresponding to an eigenpair $(\lambda,\phi)$ is given by the equation:
$$
(T - \lambda) \phi_1 = \phi \, ,
$$
The equation has a solution if and only if the right-hand side $\phi$ satisfies the compatibility condition:
$$
\phi \perp \mathcal{N}(T^\ast - \bar{\lambda}) 
\quad \Leftrightarrow \quad   (\phi,\bar{\phi}) = \int \phi^2 = 0 \, .
$$
The non-existence of the Jordan chain is therefore equivalent to the condition $\int \phi^2 \neq 0$. Just for illustration,  Table~\ref{table:eigen} presents
the first 20 eigenvalues and the corresponding integrals $\int \phi_n^2$ for the problem with data: $\omega = 10$, $r_1 = 99.5$, $r_2 = 100.5$, $\impedance = 1$.
We can see that the corresponding integrals are far from being zero, indicating that 
in this case also in the preasymptotic regime all Jordan chains have length $1$. 
\begin{table}[tb]
  \centering
  \begin{tabular}{|r|r|r|r|r|}
    \hline
 $n$  & $\Re \lambda_n$ &  $\Im \lambda_n$ & $\Re \int \phi_n^2$ & $\Im \int \phi_n^2$\\
 \hline
  1& -78.59325&  -4.75759&   0.78654&   0.14365\\
 2& -42.65673& -14.84895&   0.50072&   0.23124\\
 3&   0.78190& -27.44418&   0.55941&   0.13033\\
 4&  60.85205& -26.25547&   0.64502&  -0.10557\\
 5& 147.75558& -23.58280&   0.75264&  -0.09864\\
 6& 255.82226& -22.33018&   0.78701&  -0.04243\\
 7& 383.92616& -21.66149&   0.93699&  -0.06477\\
 8& 531.86706& -21.25960&   0.81674&  -0.17354\\
 9& 699.58884& -20.99763&   0.83734&  -0.01632\\
10& 887.07069& -20.81663&   0.91998&   0.04520\\
11&1094.30334& -20.68600&   0.95837&   0.03218\\
12&1321.28214& -20.58847&   0.97569&   0.00107\\
13&1568.00454& -20.51364&   0.97057&  -0.00984\\
14&1834.46902& -20.45490&   0.99136&  -0.02622\\
15&2120.67462& -20.40793&   0.96759&  -0.08980\\
16&2426.62075& -20.36976&   0.96724&  -0.04140\\
17&2752.30697& -20.33831&   0.98331&  -0.02085\\
18&3097.73301& -20.31208&   0.98677&  -0.04001\\
19&3462.89864& -20.28998&   0.98518&  -0.06569\\
20&3847.80372& -20.27117&   0.96087&  -0.10928\\
\hline
\end{tabular}
\caption{First 20 eigenvalues $\lambda_n$ and the corresponding integrals 
$\int_{r_1 }^{r_2 } \phi_n^2$ for data: $\omega = 10$, $r_1 = 99.5$, $r_2 = 100.5$, $\impedance = 1$. \label{table:eigen}
}
\end{table}

\end{document}